\newtheorem{theorem}{Theorem}[section]
\newtheorem{corollary}[theorem]{Corollary}
\newtheorem{proposition}[theorem]{Proposition}
\newtheorem{lemma}[theorem]{Lemma}
\newtheorem{definition}[theorem]{Definition}
\newtheorem{remark}[theorem]{Remark}
\DeclareMathOperator{\Div}{div}
\newcommand{\R}{\mathbb{R}}
\newcommand{\Feq}{f^{\mathrm{eq}}}
\renewcommand{\vec}[1]{\bm{#1}}
\title{Long-time Asymptotic Behavior of Nonlinear Fokker-Planck Type Equations with Periodic Boundary Conditions}
\author{Yekaterina Epshteyn}
\address[Yekaterina Epshteyn]%
{Department of Mathematics,
The University of Utah,
Salt Lake City, UT 84112, USA}
\email{epshteyn@math.utah.edu}
\author{Chun Liu}
\address[Chun Liu]%
{Department of Applied Mathematics, Illinois Institute of Technology.
Chicago, IL 60616, USA}
\email{cliu124@iit.edu}
\author{Masashi Mizuno}
\address[Masashi Mizuno]%
{Department of Mathematics, College of Science
and Technology, Nihon University, Tokyo 101-8308 JAPAN}
\email{mizuno.masashi@nihon-u.ac.jp}
\keywords{Nonlinear Fokker-Planck equations, entropy methods, general diffusion.}
\begin{document}

\begin{abstract}
In this paper, we study the asymptotic behavior of a class of nonlinear Fokker-Planck type equations 
in a bounded domain with periodic boundary conditions. The system is motivated by our study of grain
boundary dynamics, especially under the non-isothermal
environments. To obtain the long time behavior of the solutions,
in particular,  the exponential decay, the kinematic structures of the
systems are investigated using novel reinterpretation of the classical
entropy method.
\end{abstract}

\maketitle

\section{Introduction}
\label{sec:1}

%


In this paper, we will discuss a specific class of Fokker-Planck systems
that can be viewed as a special type of 
generalized diffusion models in the framework of the energetic-variational
approach \cite{MR3916774, MR1607500, epshteyn2022local}. Such systems are determined by the kinematic transport of the
probability density function, the free energy functional and the dissipation
(entropy production), \cite{baierlein_1999, MR1839500}. 
In particular
we are interested in the nonlinear equations with non-homogeneous diffusion and mobility
in a finite bounded domain with periodic boundary conditions.

These Fokker-Planck type models
are motivated by the studies of  the macroscopic behavior of the systems
that involve various  fluctuations  \cite{MR987631,MR2053476,MR3932086,MR3019444,MR3485127,MR4196904,MR4218540}. 
As in our previous work, we systematically studied such Fokker-Planck type
systems as a part of grain growth models in polycrystalline materials,
e.g. \cite{DK:gbphysrev,MR2772123,MR3729587,epshteyn2021stochastic}. 

One of the important goals of such study is to develop accurate mathematical
models that take into account critical
events, such as the  grain
disappearance or nucleation,  grain boundary disappearance, facet interchange, and
splitting of unstable junctions during
coarsening process  of microstructure. For example, the recent model derived in
\cite{epshteyn2021stochastic}  under assumption of isothermal
thermodynamics can be viewed as a further
extension of a simplified 1D critical event model
 studied in \cite{DK:BEEEKT,DK:gbphysrev,MR2772123,MR3729587}.  In \cite{epshteyn2021stochastic},
we have established the long time asymptotic results of the corresponding Fokker-Planck solutions, in terms of the
joint probability density function of misorientations (a difference in
the orientation between two neighboring grains that share a grain boundary) and triple
junctions (triple junctions are where three grain
boundaries meet), as well as the relation to the
marginal probability density of misorientations. 
For an equilibrium configuration of a
boundary network, we obtained the  explicit local algebraic
relationships, the generalized Herring Condition
formula, as well as a novel relationship that connects grain boundary energy density with the geometry of
the grain boundaries that share a triple junction.

The  nonlinear Fokker-Planck
equations proposed in this work also appear as a part of our
current study of important case of non-isothermal
thermodynamics \cite{BA00323160,BA47390682,PhysRevE.94.062117}. Such
Fokker-Planck systems
are derived with applications to
macroscopic models for grain boundary dynamics in polycrystalline
materials \cite{MR4263432,CMS-Katya-Chun-Masashi,Katya-Chun-Mzn4,epshteyn2022local}. 

Most existing  mathematical analysis work of the Fokker-Planck models is
developed for the simplified linear cases only.  This is especially true
for the well-known entropy methods developed for the asymptotic analysis of such
equations, e.g. \cite{MR1842428, MR3497125,MR1812873,MR1639292}. The
classical entropy methods
rely on the specific algebraic structures of the system, under convexity assumptions 
of the potential functions and consider systems in unbounded domains.

In this paper, we study the generalized nonlinear Fokker-Planck models,
with  inhomogeneous diffusion and/or  mobility 
parameters in a bounded domain with periodic boundary conditions.
This geometric constraint is relevant to our underlying grain boundary
applications, together with the non-convexity constraints of the
potential functions. Here we want to point out
that in the paper \cite{MR4506846}, we had overlooked this crucial
assumption in our mathematical analysis.
While the results there are valid,  the mathematical analysis becomes
much more involved with less restrictive 
assumptions on the models.
This is demonstrated in our current paper. Moreover, the mathematical
analysis results in
this paper are stronger than in \cite{MR4506846} and in close agreement with the numerical
results in \cite{MR4506846} in the absence of the convexity conditions.

The paper is organized as follows. Below, we formulate the nonlinear Fokker-Planck model with the inhomogeneous diffusion and
variable mobility parameters, introduce notations and some basic lemmas needed for the later sections. In Section~\ref{sec:2}, we first illustrate
large time asymptotic analysis for the homogeneous case. In this case, the equations become the  usual linear Fokker-Planck model. 
We employ the idea of the entropy method in terms of the velocity field of the
solution. In Sections~\ref{sec:3}-\ref{sec:4}, we extend the analysis to
the Fokker-Planck model with the inhomogeneous 
 diffusion without or with
variable mobility parameters respectively. Some conclusion remarks are
given in Section~\ref{sec:5}.


Let us start with the following Fokker-Planck type of equation
subject to the periodic boundary condition on a domain
$\Omega=[0,1)^n\subset\R^n$
\begin{equation}
 \label{eq:1.Fokker-Planck}
 \left\{
  \begin{aligned}
  & \frac{\partial f}{\partial t}
   -
   \Div
   \left(
   \frac{f}{\pi(x,t)}
   \nabla
   \left(
   D(x)\log f
   +
   \phi(x)
   \right)
   \right)
   =
   0,
   &\quad
   &x\in\Omega,
   t>0, \\
 &  f(x,0)=f_0(x),&\quad
   &x\in\Omega.
  \end{aligned}
 \right.
\end{equation}
Here $D=D(x):\Omega\rightarrow\R$,
$\pi=\pi(x,t):\Omega\times[0,\infty)\rightarrow\R$ are given positive
periodic functions with respect to $\Omega$ and $\phi=\phi(x):\Omega\rightarrow\R$ is a
given periodic function with respect to $\Omega$. The periodic boundary condition for
$f$ means,
\begin{equation}
 \nabla^l f(x_{b,1},t)=\nabla^l f(x_{b,2},t),
\end{equation}
for
\begin{equation}
    \begin{split}
        x_{b,1}
        &=
        (x_1,x_2,\ldots,x_{k-1},1,x_{k+1},\ldots,x_n),
        \\
        x_{b,2}
        &=
        (x_1,x_2,\ldots,x_{k-1},0,x_{k+1},\ldots,x_n)\in\partial\Omega,
    \end{split}
\end{equation}
$t>0$ and $l=0,1,2,\ldots$. In other words, $f$ can be smoothly
extended to a
function on the entire space $\R^n$ with the condition
$f(x,t)=f(x+e_j,t)$ for $x\in\R^n$, $t>0$ and $j=1,2,\ldots,n$, where
$e_j=(0,\ldots,1,\ldots,0)$, with the 1 in the $j$th place. Note that,
the periodic boundary condition for the function $f(x,t)$ is equivalent
to the condition that $f(x,t)$ is the function on the $n$-dimensional
torus for $t>0$. The periodic function is defined in the same way. The meaning of the periodic boundary condition for the Fokker-Planck equation can be seen in \cite[\S 4.1, p.90]{MR3288096}.

The above equation can be viewed as a generalized diffusion in the
general framework of energetic variational approaches
\cite{epshteyn2022local,MR3916774,MR2165379,MR4439423,MR3021544}.
One can see this  by introducing a virtual velocity field $\vec{u}$,
\begin{equation}
 \label{eq:1.velocity}
 \vec{u}
  =
  -
  \frac{1}{\pi(x,t)}
  \nabla
  \left(
   D(x)\log f
   +
   \phi(x)
  \right)
\end{equation}
and rewrite the  \eqref{eq:1.Fokker-Planck} in a equivalent form involving the following 
kinematic continuity equation (conservation of mass):
\begin{equation}
 \label{eq:1.Continuity_Equation}
 \left\{
  \begin{aligned}
  & \frac{\partial f}{\partial t}
   +
   \Div
   (f\vec{u})  =
   0,
   &\quad
   &x\in\Omega,\quad
   t>0, \\
  & f(x,0)=f_0(x),&\quad
   &x\in\Omega.
  \end{aligned}
 \right.
\end{equation}
The form of the first equation in \eqref{eq:1.Continuity_Equation} will
motivate us to  extend various conventional  methods established for various
fluids and diffusions  to  the nonlinear Fokker-Planck
model with inhomogeneous temperature parameter $D(x)$. 

Next, from
\eqref{eq:1.Continuity_Equation} together with integration by parts and
with the periodic boundary condition, it is easy to obtain that,
\begin{equation}
 \frac{d}{dt}
  \int_\Omega
  f\,dx
  =
  \int_\Omega
  \frac{\partial f}{\partial t}\,dx
  =
  -
  \int_\Omega
  \Div (f\vec{u})\,dx
  =
  0.
\end{equation}
Therefore, if $f_0$ is a probability density function on $\Omega$, we have,
\begin{equation}
 \label{eq:1.Mass_Conservation}
  \int_\Omega
  f\,dx
  =
  \int_\Omega
  f_0\,dx
  =1.
\end{equation}

Let $F$ be a free energy defined by,
\begin{equation}
 \label{eq:1.FreeEnergy}
  F[f]
  :=
  \int_\Omega
  \left(
   D(x)f(\log f -1)+f\phi(x)
  \right)
  \,dx.
\end{equation}
Hence, we can establish the energy law for
\eqref{eq:1.Continuity_Equation}.
Hereafter, $|\cdot|$ denotes the standard Euclidean vector norm.

\begin{proposition}
 \label{prop:1.EnergyLaw}
 Let $f$ be a solution of the periodic boundary value problem
 \eqref{eq:1.Continuity_Equation}, $\vec{u}$ be the velocity vector
 defined in \eqref{eq:1.velocity}, and let $F$ be a free energy defined
 in \eqref{eq:1.FreeEnergy}.  Then, for $t>0$,
 \begin{equation}
  \label{eq:1.EnergyLaw}
   \frac{d}{dt}F[f](t)
   =
   -\int_\Omega
   \pi(x,t)
   |\vec{u}|^2f\,dx
   =:-D_{\mathrm{Dis}}[f](t).
 \end{equation}
\end{proposition}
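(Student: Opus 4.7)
The plan is to differentiate the free energy $F[f](t)$ in time, use the continuity equation \eqref{eq:1.Continuity_Equation} to express the time derivative of $f$ in terms of the divergence of the flux $f\vec{u}$, integrate by parts using the periodic boundary condition, and finally identify the resulting gradient with $-\pi\vec{u}$ via the definition \eqref{eq:1.velocity}.

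More concretely, first I would write
\begin{equation}
\frac{d}{dt}F[f](t)
= \int_\Omega \left( D(x)\log f + \phi(x) \right)\frac{\partial f}{\partial t}\,dx,
\end{equation}
noting that $\partial_f[f(\log f - 1)] = \log f$, and that neither $D$ nor $\phi$ depends on $t$ so they simply pass through. The only subtle point at this step is interchanging $d/dt$ with the integral, which is justified by sufficient regularity of $f$ (assumed throughout). Note that $\pi(x,t)$ does not appear in $F$, so its $t$-dependence plays no role here.

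Next I would substitute the continuity equation $\partial_t f = -\Div(f\vec u)$ and integrate by parts:
\begin{equation}
\frac{d}{dt}F[f](t)
= -\int_\Omega (D(x)\log f + \phi(x))\,\Div(f\vec u)\,dx
= \int_\Omega \nabla\bigl(D(x)\log f+\phi(x)\bigr)\cdot f\vec u\,dx,
\end{equation}
where the periodic boundary condition (applied both to $f\vec u$, which is periodic as a product of periodic functions, and to $D(x)\log f + \phi(x)$) eliminates the boundary integral. Finally, the definition \eqref{eq:1.velocity} gives $\nabla(D(x)\log f + \phi(x)) = -\pi(x,t)\vec u$, so
\begin{equation}
\frac{d}{dt}F[f](t)
= -\int_\Omega \pi(x,t)|\vec u|^2 f\,dx,
\end{equation}
which is the desired identity.

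I do not expect any serious obstacle here: the proposition is a textbook energy dissipation computation once one recognizes that the free energy density has the chemical potential $D(x)\log f + \phi(x)$ as its variational derivative (in the $L^2$ sense, modulo the factor of $D$), and that this chemical potential is precisely what drives the velocity $\vec u$. The only point requiring mild care is to assume $f>0$ so that $\log f$ is well defined and the integration by parts is valid; this is consistent with $f$ being a probability density satisfying a strictly parabolic equation. The structural fact that the dissipation $D_{\mathrm{Dis}}[f]$ is manifestly nonnegative then follows automatically from the positivity of $\pi$ and $f$.
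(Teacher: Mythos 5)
Your proof is correct and follows exactly the same three steps as the paper's: differentiate $F$ using time-independence of $D$ and $\phi$, substitute the continuity equation and integrate by parts under the periodic boundary condition, and then identify $\nabla(D\log f+\phi)=-\pi\vec{u}$. Nothing to add.
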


\begin{proof}
    Note that $D(x)$ and $\phi(x)$ are independent of $t$, so direct
    computation yields
    \begin{equation}
     \label{eq:1.EnergyLaw1}
      \frac{d}{dt}F[f](t)
      =
      \int_\Omega
      (D(x)\log f+\phi(x))
      \frac{\partial f}{\partial t}
      \,dx.
    \end{equation}
    Using \eqref{eq:1.Fokker-Planck} together with periodic boundary
    condition and integration by parts, we obtain
    \begin{equation}
     \label{eq:1.EnergyLaw2}
      \begin{split}
       \int_\Omega
       (D(x)\log f+\phi(x))
       \frac{\partial f}{\partial t}
       \,dx
       &=
       -
       \int_\Omega
       (D(x)\log f+\phi(x))
       \Div(f\vec{u})
       \,dx 
       \\
       &=
       \int_\Omega
       \nabla(D(x)\log f+\phi(x))
       \cdot
       (f\vec{u})
       \,dx. 
      \end{split}    
    \end{equation}
    From \eqref{eq:1.velocity}, $\nabla(D(x)\log
    f+\phi(x))=-\pi(x,t)\vec{u}$ hence
    \begin{equation}
     \label{eq:1.EnergyLaw3}
      \int_\Omega
      \nabla(D(x)\log f+\phi(x))
      \cdot
      (f\vec{u})
      \,dx
      =
      -
      \int_\Omega
      \pi(x,t)
      |\vec{u}|^2
      f
      \,dx.
    \end{equation}
    Thus, \eqref{eq:1.EnergyLaw} is proved by combining
    \eqref{eq:1.EnergyLaw1}, \eqref{eq:1.EnergyLaw2}, and
    \eqref{eq:1.EnergyLaw3}.
\end{proof}

Throughout this paper, we assume the H\"older regularity with
$0<\alpha<1$ for coefficients $\pi(x,t)$, $D(x)$, $\phi(x)$ and initial
datum $f_0$,
\begin{equation}
    \label{eq:1.Regularity}
    \begin{split}
 &   \pi\in C_{\mathrm{per}}^{1+\alpha, (1+\alpha)/2}(\Omega\times[0,T)),\
    D\in C_{\mathrm{per}}^{2+\alpha}(\Omega),
    \\
    &
    \phi\in C_{\mathrm{per}}^{2+\alpha}(\Omega),\
    f_0\in C_{\mathrm{per}}^{2+\alpha}(\Omega),
    \end{split}
\end{equation}
where
\begin{equation*}
  C_{\mathrm{per}}^{2+\alpha}(\Omega)
  :=
  \{g\in C^{2+\alpha}(\Omega):
  g\ \text{is a periodic function on }\Omega
  \}, 
\end{equation*}
\begin{multline*}
  C_{\mathrm{per}}^{1+\alpha, (1+\alpha)/2}(\Omega\times[0,T))
  \\
  :=
  \{g\in C^{1+\alpha, (1+\alpha)/2}(\Omega\times[0,T)): 
  \\
  g(\cdot,t)\ \text{is a periodic function on }\Omega\
  \text{for}\ t>0
  \}. 
\end{multline*}

In this paper, we will consider the classical solutions of \eqref{eq:1.Fokker-Planck} defined below. In principle, they will be
smooth enough so that all the derivatives and integrations evolved in
the equations and the estimates will make sense (see
\cite{epshteyn2022local, MR1625845,MR0241822, MR1465184}).

\begin{definition}
   A periodic function in space $f=f(x,t)$ is a classical solution of
   the problem \eqref{eq:1.Fokker-Planck} in $\Omega\times[0,T)$,
   subject to the periodic boundary condition, if $f\in
   C^{2,1}(\Omega\times(0,T))\cap
   C^{1,0}(\overline{\Omega}\times[0,T))$, $f(x,t)>0$ for $(x,t)\in
   \Omega\times[0,T)$, and satisfies equation \eqref{eq:1.Fokker-Planck}
   in a classical sense.
\end{definition}
\begin{remark}
In \cite{epshteyn2022local}, we discussed the local well-posedness
of the system with the no-flux boundary condition in the H\"older space
settings. We also mentioned the local well-posedness in
\cite[Proposition 1.5]{MR4506846}. 
\textcolor{blue}{Recently, global well-posedness of the problem \eqref{eq:1.Fokker-Planck} was studied in \cite{arXiv:2502.13151}.}
Above, we recalled the definition of the
corresponding H\"older spaces but we note that the arguments in this
paper are not necessarily in the H\"older space settings. 
 \end{remark} 
Next we will prescribe those constants in terms of $C's$. associated with various quantities.
This is important for the computations and estimates in the later sections.

\begin{enumerate}
\item The bounds for the initial data:
\begin{equation}
 \label{eq:1.Initial}
  0
  <
  \Cl{const:InitMin} 
  \leq
  f_0(x,t)
  \leq
  \Cl{const:InitMax}.
\end{equation}

 \item The lower bound for the diffusion $D(x)$, $
      \Cr{const:D_Min}\geq1$ such that
      \begin{equation}
       \label{eq:1.D_Min}
	D(x)\geq \Cl{const:D_Min}.
      \end{equation}

 \item The positive bounds for the mobility:
       \begin{equation}
	\label{eq:1.PiMinMax}
	 0<
	 \Cl{const:Pi_Min}\leq \pi(x,t)\leq \Cl{const:Pi_Max}.
       \end{equation}
       
 \item The bound for the derivatives of the mobility:
       \begin{equation}
	\label{eq:1.Pi_Time}
	 |\pi_t(x,t)|\leq \Cl{const:Pi_Time},\ \text{and}
       \end{equation}
       
       \begin{equation}
	\label{eq:1.grad_Pi}
	 |\nabla \pi(x,t)|\leq \Cl{const:grad_Pi}.
       \end{equation}
       
 \item The bound of the derivative of $D(x)$:
       \begin{equation}
	\label{eq:1.grad_D}
	 |\nabla D(x)|\leq \Cl{const:grad_D}.
       \end{equation}
       
 \item The lower bound of the Hessian of $\phi$:
       \begin{equation}
	\label{eq:1.Hesse_Potential_Lower_Bounds}
	 \nabla^2\phi(x)
	 \geq
	 -\lambda I, \quad \text{for}  \quad x\in\Omega.
       \end{equation}
\end{enumerate}

It is clear that these constants/parameters will also satisfy various
relations among themselves and they are also
associated with the original system. 

\begin{lemma}
For a positive function $D(x)$, the following is true:
 \begin{equation} 
  \label{eq:1.D_Max}
  |D(x)|
   \leq
   \Cr{const:D_Min}
   +
   \sqrt{n}\Cr{const:grad_D},
 \end{equation}
 for $ \Cr{const:D_Min}$ and $\Cr{const:grad_D}$ defined earlier and $\Omega$ being a unit square/cube.
\end{lemma}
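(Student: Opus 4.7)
The plan is to exploit the compactness of the unit cube together with the gradient bound via a one-line fundamental-theorem-of-calculus argument. Since $D$ is positive, $|D(x)| = D(x)$, so it suffices to establish the pointwise upper bound $D(x) \leq \Cr{const:D_Min} + \sqrt{n}\, \Cr{const:grad_D}$.

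First I would use that $D$, being continuous (by the regularity assumption \eqref{eq:1.Regularity}) and periodic on $\overline{\Omega}$, attains its minimum at some point $x_0 \in \overline{\Omega}$. Since $\Cr{const:D_Min}$ in \eqref{eq:1.D_Min} plays the role of a tight lower bound on $D$, we identify $D(x_0) = \Cr{const:D_Min}$; equivalently, the statement we wish to prove is sharpest precisely when $\Cr{const:D_Min}$ coincides with $\min_\Omega D$, and otherwise the inequality is only strengthened.

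Next, for any $x \in \Omega$, I would join $x_0$ to $x$ by the straight segment $t \mapsto x_0 + t(x - x_0)$ and apply the fundamental theorem of calculus,
\begin{equation}
 D(x) - D(x_0)
 =
 \int_0^1 \nabla D(x_0 + t(x - x_0)) \cdot (x - x_0) \, dt.
\end{equation}
Using the gradient bound \eqref{eq:1.grad_D} together with the observation that the diameter of the unit cube $\Omega = [0,1)^n$ is $\sqrt{n}$, one obtains
\begin{equation}
 |D(x) - D(x_0)|
 \leq
 |x - x_0| \sup_{y \in \Omega} |\nabla D(y)|
 \leq
 \sqrt{n}\, \Cr{const:grad_D}.
\end{equation}

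Combining these two steps yields $D(x) \leq D(x_0) + \sqrt{n}\, \Cr{const:grad_D} = \Cr{const:D_Min} + \sqrt{n}\, \Cr{const:grad_D}$, which is the claim. No real obstacle is expected here; the proof is a routine mean value estimate, and the only subtlety is the convention that $\Cr{const:D_Min}$ is taken as the actual minimum of $D$ rather than merely as some weaker lower bound.
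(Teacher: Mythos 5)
Your proof is essentially the same as the paper's: join two points of the cube by a segment, apply the fundamental theorem of calculus, bound the integrand by $\sqrt{n}\,\Cr{const:grad_D}$ using the gradient bound and the fact that the cube has diameter $\sqrt{n}$. The paper phrases this for an arbitrary second point $y$ and takes the infimum over $y$ at the very end; you instead pick $y = x_0$ to be the argmin at the outset. These are interchangeable.

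One remark you make is logically backwards and worth correcting. You write that if $\Cr{const:D_Min}$ is strictly smaller than $\min_\Omega D$ then ``the inequality is only strengthened.'' In fact the opposite is true: the inequality $|D(x)| \leq \Cr{const:D_Min} + \sqrt{n}\,\Cr{const:grad_D}$ has a \emph{smaller} right-hand side when $\Cr{const:D_Min} < \min_\Omega D$, so it becomes a \emph{stronger} claim that your argument (and the paper's) does not establish — and indeed it can fail, e.g., for $D \equiv 10$ with the permissible choice $\Cr{const:D_Min} = 1$, $\Cr{const:grad_D} = 0$. Both your proof and the paper's silently rely on taking $\Cr{const:D_Min} = \inf_\Omega D$; you are right to flag that convention, but it is a hypothesis the proof needs, not something one can discard ``otherwise.''
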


\begin{proof}
 For any $x,y\in\Omega$, we first note that
   \begin{equation}
    \label{eq:1.D_Max1}
     D(x)
     =
     D(y)
     +
     \int_0^1
     \frac{d}{d\varepsilon}
     D(\varepsilon x+(1-\varepsilon)y)
     \,d\varepsilon.
   \end{equation}
   From \eqref{eq:1.grad_D} and $\Omega$ being a unit square/cube, we
   have
   \begin{equation}
    \label{eq:1.D_Max2}
     \left|
      \frac{d}{d\varepsilon}
      D(\varepsilon x+(1-\varepsilon)y)
     \right|
     =
     \left|
      \nabla D(\varepsilon x+(1-\varepsilon)y)
      \cdot
      (x-y)
     \right|
     \leq
     \sqrt{n}
     \Cr{const:grad_D}.
   \end{equation}
   By the triangle inequality and \eqref{eq:1.D_Max1}, we get
   \begin{equation}
    \label{eq:1.D_Max3}
     |D(x)|
     \leq
     |D(y)|
     +
     \sqrt{n}\Cr{const:grad_D}.
   \end{equation}
   Therefore, \eqref{eq:1.D_Max} is deduced by taking infimum with
   respect to $y$ in \eqref{eq:1.D_Max3}.
\end{proof}

We look at an equilibrium solution of \eqref{eq:1.Fokker-Planck} in the form
\begin{equation}
 \label{eq:1.Feq}
 \Feq (x):=
  \exp\left(-\frac{\phi(x)-\Cl{const:Feq}}{D(x)}\right),
\end{equation}
where the constant $\Cr{const:Feq}$ is determined as
\begin{equation}
 \label{eq:1.Feq-PDF}
 \int_{\Omega}
  \Feq (x)\,dx
  =1.
\end{equation}

Here one can derive the   relation between  the constant
$\Cr{const:Feq}$ and the potential function $\phi$.

\begin{lemma}
 Let $\Cr{const:Feq}$ be in \eqref{eq:1.Feq}. Then
 \begin{equation}
  \label{eq:1.Est_Feq_const}
 |\Cr{const:Feq}|\leq\|\phi\|_{L^\infty(\Omega)}.
 \end{equation}
\end{lemma}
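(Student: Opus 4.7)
The plan is to exploit the normalization condition \eqref{eq:1.Feq-PDF} together with the fact that $|\Omega|=1$, and reduce the bound to an elementary mean-value observation. First, I would note that $\Feq$ is continuous on the compact set $\overline{\Omega}$: by \eqref{eq:1.Regularity} both $\phi$ and $D$ are Hölder continuous, and by \eqref{eq:1.D_Min} we have $D\geq \Cr{const:D_Min}\geq 1>0$, so the composition in \eqref{eq:1.Feq} is continuous. In particular $\Feq$ attains its minimum and maximum on $\overline{\Omega}$.

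Since $\int_\Omega \Feq\,dx = 1$ and $|\Omega|=1$, the minimum of $\Feq$ cannot strictly exceed $1$ and the maximum cannot be strictly less than $1$. Consequently there exist points $x_0, x_1 \in \overline{\Omega}$ such that $\Feq(x_0)\leq 1 \leq \Feq(x_1)$. Taking logarithms in \eqref{eq:1.Feq} and using $D(x_0),D(x_1)>0$ to preserve the direction of the inequalities, these two conditions translate respectively into $\phi(x_0)-\Cr{const:Feq}\geq 0$ and $\phi(x_1)-\Cr{const:Feq}\leq 0$, that is
\begin{equation*}
\phi(x_1) \;\leq\; \Cr{const:Feq} \;\leq\; \phi(x_0).
\end{equation*}

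Finally, applying the trivial pointwise bound $|\phi(x_i)|\leq \|\phi\|_{L^\infty(\Omega)}$ for $i=0,1$ yields the desired estimate \eqref{eq:1.Est_Feq_const}. There is essentially no obstacle; the only items requiring care are the continuity/compactness step that produces the two evaluation points $x_0,x_1$ and the correct sign bookkeeping when inverting the exponential, where the positivity of $D$ is precisely what guarantees that the inequalities are not reversed.
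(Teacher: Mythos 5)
Your proof is correct and is essentially the same argument as the paper's: the paper invokes the mean value theorem for integrals to get a single point $x_0$ with $\Feq(x_0)=1$ (hence $\phi(x_0)=\Cr{const:Feq}$), while you instead find two points sandwiching $\Cr{const:Feq}$ between $\phi(x_1)$ and $\phi(x_0)$; both rest on the normalization $\int_\Omega \Feq\,dx=1$ with $|\Omega|=1$ and the sign preservation coming from $D>0$.
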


\begin{proof}
 By the mean value theorem and \eqref{eq:1.Feq-PDF}, there exists
 $x_0\in\Omega$ such that
 \begin{equation}
  \Feq(x_0)
   =
   \frac{1}{|\Omega|}
   \int_{\Omega}
   \Feq(x)\,dx
   =1.
 \end{equation}
 Thus,
 \begin{equation}
  \exp\left(-\frac{\phi(x_0)-\Cr{const:Feq}}{D(x_0)}\right)
   =
   1
 \end{equation}
 hence $\phi(x_0)=\Cr{const:Feq}$. The estimate \eqref{eq:1.Est_Feq_const}
 is easily deduced.
\end{proof}

From \eqref{eq:1.Est_Feq_const}, it follows immediately that
\begin{lemma}
 Let $\Feq$ be defined by \eqref{eq:1.Feq}. Then, for $x\in\Omega$,
 \begin{equation}
  \label{eq:1.Feq_Est}
  \exp\left(
      -\frac{2\|\phi\|_{L^\infty(\Omega)}}{\Cr{const:D_Min}}
      \right)
  \leq
  \Feq(x)
  \leq
  \exp\left(
       \frac{2\|\phi\|_{L^\infty(\Omega)}}{\Cr{const:D_Min}}
      \right).
 \end{equation}
\end{lemma}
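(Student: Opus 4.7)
The statement follows almost immediately from the preceding lemma by a triangle-inequality argument applied to the exponent of $\Feq$. My plan is as follows.

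First I would fix $x\in\Omega$ and write the exponent of $\Feq(x)=\exp\bigl(-(\phi(x)-\Cr{const:Feq})/D(x)\bigr)$ in a form amenable to estimation. By the triangle inequality,
\begin{equation}
\bigl|\phi(x)-\Cr{const:Feq}\bigr|
\leq \|\phi\|_{L^\infty(\Omega)}+|\Cr{const:Feq}|,
\end{equation}
and the previous lemma gives $|\Cr{const:Feq}|\leq \|\phi\|_{L^\infty(\Omega)}$, so that $|\phi(x)-\Cr{const:Feq}|\leq 2\|\phi\|_{L^\infty(\Omega)}$.

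Next I would divide by $D(x)$. Since $D(x)\geq \Cr{const:D_Min}>0$ by \eqref{eq:1.D_Min}, this step is harmless and yields
\begin{equation}
-\frac{2\|\phi\|_{L^\infty(\Omega)}}{\Cr{const:D_Min}}
\leq -\frac{\phi(x)-\Cr{const:Feq}}{D(x)}
\leq \frac{2\|\phi\|_{L^\infty(\Omega)}}{\Cr{const:D_Min}}.
\end{equation}
Finally, exponentiating and using the monotonicity of the exponential function gives the two-sided estimate \eqref{eq:1.Feq_Est}.

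There is no real obstacle here: the content of the lemma is exactly ``bound the exponent,'' and the bound on $\Cr{const:Feq}$ from the previous lemma combined with the uniform lower bound \eqref{eq:1.D_Min} on $D$ does all the work. The only thing to keep straight is the direction of the inequalities after the minus sign is absorbed and after exponentiation, but both of these are automatic.
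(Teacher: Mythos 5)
Your proof is correct and is exactly the intended argument: the paper gives no explicit proof, stating only that the lemma "follows immediately" from the bound $|\Cr{const:Feq}|\leq\|\phi\|_{L^\infty(\Omega)}$, and your triangle-inequality step followed by division by $D(x)\geq\Cr{const:D_Min}$ and exponentiation is precisely that immediate deduction.
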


In \cite[Proposition 1.6]{MR4506846}, we have derived the following maximum
principle which is crucial to our analysis.

\begin{proposition}
 \label{prop:1.MaximumPrinciple} 
 Let the coefficients $\pi(x,t)$, $\phi(x)$, $D(x)$, and a positive
 probability density function $f_0(x)$ satisfy the strong positivity
 \eqref{eq:1.D_Min}, \eqref{eq:1.PiMinMax}, \eqref{eq:1.Initial} and the
 H\"older regularity \eqref{eq:1.Regularity} for $0<\alpha<1$. Let $f$
 be a classical solution of \eqref{eq:1.Fokker-Planck}, then the following holds:
 \begin{equation}
  \label{eq:1.MaximumPrinciple}
  \begin{split}
&   \exp\left(
       \frac{1}{D(x)}
       \min_{y\in\Omega}
       \left(
	D(y)
	\log \frac{f_0(y)}{f^{\mathrm{eq}}(y)}
       \right)
      \right)
  f^{\mathrm{eq}}(x)
  \\
  &\quad
  \leq
  f(x,t)
  \leq  \exp\left(
       \frac{1}{D(x)}
       \max_{y\in\Omega}
       \left(
	D(y)
	\log \frac{f_0(y)}{f^{\mathrm{eq}}(y)}
       \right)
      \right)
  f^{\mathrm{eq}}(x),
  \end{split}
 \end{equation}
 for $x\in\Omega$, $t>0$. 
\end{proposition}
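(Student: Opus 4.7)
The plan is to reduce \eqref{eq:1.MaximumPrinciple} to a weak maximum principle for an auxiliary function. Introduce
\[
    w(x,t) := D(x)\log\frac{f(x,t)}{\Feq(x)} = D(x)\log f(x,t) + \phi(x) - \Cr{const:Feq},
\]
where $\Cr{const:Feq}$ is the constant from \eqref{eq:1.Feq}. Taking logarithms in \eqref{eq:1.MaximumPrinciple} shows that the desired bounds are equivalent to
\[
    \min_{y\in\Omega} w(y,0) \le w(x,t) \le \max_{y\in\Omega} w(y,0), \qquad x\in\Omega,\ t>0,
\]
so it suffices to prove a weak maximum principle for $w$ on the torus.

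The next step is to derive the PDE satisfied by $w$. Since $\nabla w = \nabla(D\log f + \phi)$, the Fokker-Planck equation \eqref{eq:1.Fokker-Planck} rewrites as $f_t = \Div\bigl(\frac{f}{\pi}\nabla w\bigr)$. Expanding the divergence, using $w_t = D\,f_t/f$, and expressing $\nabla f/f$ via $\log f = (w + \Cr{const:Feq} - \phi)/D$, I would collect terms to arrive at an equation of the form
\[
    w_t - \frac{D(x)}{\pi(x,t)}\,\Delta w - \vec{b}(x,t)\cdot\nabla w = 0,
\]
where $\vec{b}$ depends smoothly on $x$, $t$, $\nabla w$, $\log f$, $\nabla\phi$, $\nabla D$, and $\pi^{-1}\nabla\pi$. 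The key point is that no zero-order term in $w$ appears, which is consistent with the fact that $w\equiv 0$ when $f=\Feq$.

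With this reformulation in hand, the proposition follows from the classical weak maximum principle on the torus. Since $f$ is a positive classical solution and the data satisfy \eqref{eq:1.Regularity}, \eqref{eq:1.D_Min}, \eqref{eq:1.PiMinMax}, the coefficients $D/\pi$ and $\vec{b}$ are bounded on $\overline\Omega\times[0,T_0]$ for any $T_0<T$. A standard perturbation argument applied to $w\mp\varepsilon t$, together with the fact that the torus has no spatial boundary, rules out interior space-time extrema and forces both the supremum and infimum of $w$ over $\overline\Omega\times[0,T_0]$ to be attained at $t=0$; letting $\varepsilon\to 0$ completes the step.

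The main obstacle is the algebraic derivation of the equation for $w$: because $f=\Feq\exp(w/D)$ involves $D(x)$ in the exponent, the computation of $\nabla f/f$ produces extra terms in $\nabla D$ and $\log f$ that must be reorganized carefully to verify that no zero-order term in $w$ survives. Once this is in place, the rest is a textbook maximum principle argument, and reversing the substitution $w=D\log(f/\Feq)$ returns precisely \eqref{eq:1.MaximumPrinciple}.
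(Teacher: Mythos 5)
The paper does not reprove this proposition; it cites \cite[Proposition~1.6]{MR4506846}. Your reconstruction via the entropy variable $w = D(x)\log\bigl(f/\Feq\bigr) = D(x)\log f + \phi(x) - \Cr{const:Feq}$ is correct and is the natural (and almost certainly the cited) argument: one checks directly that $w_t - \tfrac{D}{\pi}\Delta w = \vec{b}\cdot\nabla w$ with
\[
\vec{b} \;=\; \frac{1}{\pi}\bigl(\nabla w - \nabla\phi\bigr) \;-\; \frac{w-\phi+\Cr{const:Feq}}{\pi D}\,\nabla D \;-\; \frac{D}{\pi^2}\nabla\pi,
\]
so there is no free-standing zero-order term (everything $w$-dependent sits inside a coefficient that multiplies $\nabla w$ and hence vanishes at spatial critical points), $D/\pi$ is bounded above and below by \eqref{eq:1.D_Min} and \eqref{eq:1.PiMinMax}, and on the torus the weak maximum principle applied to $w \mp \varepsilon t$ (with $\vec{b}$ frozen along the given classical solution, hence a bounded continuous coefficient on $\overline{\Omega}\times[0,T_0]$) forces the extrema of $w$ onto $t=0$. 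Dividing by $D(x)>0$ and exponentiating then recovers \eqref{eq:1.MaximumPrinciple} exactly, since $\max_y w(y,0)=\max_y D(y)\log\bigl(f_0(y)/\Feq(y)\bigr)$ and similarly for the minimum. The one place you wave a hand --- the explicit collection of terms showing the zero-order part disappears --- does in fact check out as displayed above, so the proof is complete in substance.
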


From Proposition \ref{prop:1.MaximumPrinciple}, we have the following
logarithm estimates. In particular, we will show that the bound of the solution
is independent of the diffusion bound $\Cr{const:D_Min}$.

\begin{corollary}
 \label{cor:1.bounds_of_log_f}
 As in the same assumptions in Proposition
 \ref{prop:1.MaximumPrinciple}, 
 there exists a positive constant $\Cl{const:log_f}$ which
 depends only on $n$, the initial datum $f_0$ (in terms of
 $\Cr{const:InitMin},\ \Cr{const:InitMax}$), the gradient of $D$ (in
 terms of $\Cr{const:grad_D}$), and the bounds of the potential
 $\|\phi\|_{L^\infty(\Omega)}$ such that 
  \begin{equation}
  \label{eq:1.bounds_of_log_f}
  |\log f(x,t)|\leq \Cr{const:log_f}.
 \end{equation}
 Moreover, $\Cr{const:log_f}$ is independent to  the diffusion bound $\Cr{const:D_Min}$.
\end{corollary}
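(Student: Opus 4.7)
The plan is to start from the two-sided maximum principle in Proposition~\ref{prop:1.MaximumPrinciple}, take logarithms, and then exploit the algebraic fact that the normalizing constant $\Cr{const:Feq}$ cancels identically when one expands $\log f^{\mathrm{eq}}$. Concretely, take logarithms of \eqref{eq:1.MaximumPrinciple} and substitute $D(y)\log\frac{f_0(y)}{f^{\mathrm{eq}}(y)} = D(y)\log f_0(y) + \phi(y) - \Cr{const:Feq}$ together with $\log f^{\mathrm{eq}}(x) = -\frac{\phi(x)-\Cr{const:Feq}}{D(x)}$. The additive $\Cr{const:Feq}$ inside the min/max extracts, combines with the term $\Cr{const:Feq}/D(x)$ from $\log f^{\mathrm{eq}}(x)$, and drops out; this leaves a bound of the form
\begin{equation*}
  \log f(x,t)
  \leq
  \frac{1}{D(x)}
  \max_{y\in\Omega}
  \bigl(D(y)\log f_0(y)+\phi(y)\bigr)
  -
  \frac{\phi(x)}{D(x)},
\end{equation*}
together with the analogous lower bound with $\min$ in place of $\max$. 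This rewriting is the key move: it removes $\Cr{const:Feq}$ from the estimate, so that the only place where $D$ itself enters is through the ratio $D(y)/D(x)$ and the quotient $\phi/D$.

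Next I would estimate each ingredient in a way that never references $\Cr{const:D_Min}$ except through the assumption $\Cr{const:D_Min}\geq 1$, so that $1/D(x)\leq 1$. For the potential piece, $|\phi(y)-\phi(x)|/D(x) \leq 2\|\phi\|_{L^\infty(\Omega)}$ immediately. For the initial-data piece, I write
\begin{equation*}
  \frac{D(y)}{D(x)}\log f_0(y)
  =
  \log f_0(y)
  +
  \frac{D(y)-D(x)}{D(x)}\log f_0(y),
\end{equation*}
and bound $|D(y)-D(x)|\leq \sqrt{n}\,\Cr{const:grad_D}$ using \eqref{eq:1.grad_D} together with $\diam\Omega\leq\sqrt{n}$ (this is precisely the calculation \eqref{eq:1.D_Max1}–\eqref{eq:1.D_Max3} of the previous lemma, applied to $D$ rather than to $|D|$). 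Using $D(x)\geq 1$ gives $|D(y)-D(x)|/D(x)\leq \sqrt{n}\,\Cr{const:grad_D}$, and $|\log f_0(y)|\leq \max(|\log \Cr{const:InitMin}|,\log \Cr{const:InitMax})$ from \eqref{eq:1.Initial}.

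Assembling these pieces yields a bound of the form
\begin{equation*}
  |\log f(x,t)|
  \leq
  (1+\sqrt{n}\,\Cr{const:grad_D})
  \max(|\log \Cr{const:InitMin}|,\log \Cr{const:InitMax})
  +
  2\|\phi\|_{L^\infty(\Omega)},
\end{equation*}
which depends only on $n$, $\Cr{const:InitMin}, \Cr{const:InitMax}$, $\Cr{const:grad_D}$, and $\|\phi\|_{L^\infty(\Omega)}$, and crucially is independent of $\Cr{const:D_Min}$; this is exactly \eqref{eq:1.bounds_of_log_f}. The only mildly subtle step, and the one I would flag as the main obstacle to get right, is the cancellation of $\Cr{const:Feq}$ at the start: a naive route (for instance, using \eqref{eq:1.Feq_Est} directly to estimate $\log f^{\mathrm{eq}}$) produces factors of $1/\Cr{const:D_Min}$ that would ruin the independence claim. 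Once the algebraic rearrangement is performed before any norm estimate, the remainder of the argument is a routine combination of $D\geq 1$, the Lipschitz bound on $D$, and the uniform bounds on $f_0$ and $\phi$.
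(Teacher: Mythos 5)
Your proof is correct, but it takes a genuinely different route from the paper. The paper takes logs of the maximum principle exactly as you do, but then estimates everything crudely: it bounds $\|\log f^{\mathrm{eq}}\|_{L^\infty(\Omega)} \le 2\|\phi\|_{L^\infty(\Omega)}/\Cr{const:D_Min}$ via \eqref{eq:1.Feq_Est}, uses $\|D\|_{L^\infty(\Omega)}\le \Cr{const:D_Min}+\sqrt{n}\Cr{const:grad_D}$ from \eqref{eq:1.D_Max}, and accumulates several factors of $1/\Cr{const:D_Min}$, all of which it then discards by $\Cr{const:D_Min}\ge 1$ to reach
\begin{equation*}
 \Cr{const:log_f}
 =
 (1+\sqrt{n}\Cr{const:grad_D})
 (\|\log f_0\|_{L^\infty(\Omega)}+2\|\phi\|_{L^\infty(\Omega)})
 +2\|\phi\|_{L^\infty(\Omega)}.
\end{equation*}
You instead expand $\log f^{\mathrm{eq}}$ and $D(y)\log(f_0/f^{\mathrm{eq}})$ in terms of $\phi$ and the normalizing constant $\Cr{const:Feq}$ and observe that $\Cr{const:Feq}$ cancels identically before any estimate is taken, so that \eqref{eq:1.Feq_Est} is never needed. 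This is a cleaner algebraic organization and, as you note, gives the sharper bound $(1+\sqrt{n}\Cr{const:grad_D})\|\log f_0\|_{L^\infty(\Omega)}+2\|\phi\|_{L^\infty(\Omega)}$, without the extra $(1+\sqrt{n}\Cr{const:grad_D})\cdot 2\|\phi\|_{L^\infty(\Omega)}$ term.

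One correction to your closing remark: the ``naive route'' you warn against is exactly what the paper does, and it does not ruin the independence claim. The $1/\Cr{const:D_Min}$ factors that appear when one plugs in \eqref{eq:1.Feq_Est} are each bounded by $1$ because $\Cr{const:D_Min}\ge 1$, so they drop out of the final constant just as yours do; the paper's $\Cr{const:log_f}$ is larger but still uniform in $\Cr{const:D_Min}$. Your cancellation is an improvement in economy and in the size of the constant, not a fix for an otherwise broken argument.
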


\begin{proof}
 By \eqref{eq:1.MaximumPrinciple}, we have by \eqref{eq:1.D_Min} that,
 \begin{equation}
 \begin{split}
&   \frac{1}{D(x)}
   \min_{y\in\Omega}
   \left(
    D(y)
    \log \frac{f_0(y)}{f^{\mathrm{eq}}(y)}
   \right)
   +  
   \log f^{\mathrm{eq}}(x)   
  \\
&\quad  \leq
   \log f(x,t)
   \leq
   \left(
   \frac{1}{D(x)}
       \max_{y\in\Omega}
       \left(
	D(y)
	\log \frac{f_0(y)}{f^{\mathrm{eq}}(y)}
       \right)
  \right)
  +
  \log
  f^{\mathrm{eq}}(x).
  \end{split}
 \end{equation}
From here we have the following by using the assumption
\eqref{eq:1.D_Min}:
 \begin{equation}
 \begin{split}
   |\log f(x,t)|
 &  \leq
   \frac{1}{\Cr{const:D_Min}}
   \|D\|_{L^\infty(\Omega)}
   (
   \|\log f_0\|_{L^\infty(\Omega)}
   \\
   &\quad
   +
   \|\log \Feq\|_{L^\infty(\Omega)}
   )
   +
   \|\log \Feq\|_{L^\infty(\Omega)}.
   \end{split}
\end{equation}
 Using \eqref{eq:1.D_Max} and \eqref{eq:1.Feq_Est}, we obtain,
\begin{equation}
 \label{eq:1.Cor.1}
 \begin{split}
  &
  \frac{1}{\Cr{const:D_Min}}
  \|D\|_{L^\infty(\Omega)}
  (
  \|\log f_0\|_{L^\infty(\Omega)}
  +
  \|\log \Feq\|_{L^\infty(\Omega)}
  )
  +
  \|\log \Feq\|_{L^\infty(\Omega)} 
  \\
  &\leq
  \frac{1}{\Cr{const:D_Min}}
  (
  \Cr{const:D_Min}
  +
  \sqrt{n}\Cr{const:grad_D}
  )
  \left(
  \|\log f_0\|_{L^\infty(\Omega)}
  +
  \frac{2\|\phi\|_{L^\infty(\Omega)}}{\Cr{const:D_Min}}
  \right)
  +
  \frac{2\|\phi\|_{L^\infty(\Omega)}}{\Cr{const:D_Min}}
  \\
  &=
  \left(
  1
  +
  \frac{\sqrt{n}\Cr{const:grad_D}}{\Cr{const:D_Min}}
  \right)
  \left(
  \|\log f_0\|_{L^\infty(\Omega)}
  +
  \frac{2\|\phi\|_{L^\infty(\Omega)}}{\Cr{const:D_Min}}
  \right)
  +
  \frac{2\|\phi\|_{L^\infty(\Omega)}}{\Cr{const:D_Min}}.
 \end{split}
\end{equation}
 Since $\Cr{const:D_Min}\geq1$, we have
 \begin{equation}
 \label{eq:1.Cor.2}
  \begin{split}
   &
   \left(
   1
   +
   \frac{\sqrt{n}\Cr{const:grad_D}}{\Cr{const:D_Min}}
   \right)
   \left(
   \|\log f_0\|_{L^\infty(\Omega)}
   +
   \frac{2\|\phi\|_{L^\infty(\Omega)}}{\Cr{const:D_Min}}
   \right)
   +
   \frac{2\|\phi\|_{L^\infty(\Omega)}}{\Cr{const:D_Min}} 
   \\
   &\leq
   (
   1
   +
   \sqrt{n}\Cr{const:grad_D}
   )
   (
   \|\log f_0\|_{L^\infty(\Omega)}
   +
   2\|\phi\|_{L^\infty(\Omega)}
   )
   +
   2\|\phi\|_{L^\infty(\Omega)}
   =:
   \Cr{const:log_f}.
  \end{split} 
 \end{equation}
 Here, the constant $\Cr{const:log_f}>0$ depends only on
 $\Cr{const:InitMin}$, $\Cr{const:InitMax}$, $\Cr{const:grad_D}$ and
 $\|\phi\|_{L^\infty(\Omega)}$.
 \end{proof}

\begin{remark}\label{rk1_8}
 We emphasize that from the above corollary, we can see that
 $\Cr{const:log_f}$ can be taken uniform 
 with respect to
 $\Cr{const:D_Min}$, while the solution $f$ of the PDE
 \eqref{eq:1.Fokker-Planck} certainly depends on the diffusion $D(x)$.
 We also note that \eqref{eq:1.bounds_of_log_f} is equivalent to 
 \begin{equation}
  e^{-\Cr{const:log_f}}
   \leq
   f(x,t)
   \leq
   e^{\Cr{const:log_f}}
 \end{equation}
 for all $x\in\Omega$ and $t>0$, namely uniform lower and upper bound of
 $f$ with respect to $\Cr{const:D_Min}$. This immediately yields the
 following Harnack type estimate:
 \begin{equation}
  \frac{\max_{x\in\Omega, t>0}f(x,t)}{\min_{x\in\Omega, t>0}f(x,t)}
   \leq
   e^{2\Cr{const:log_f}}.
 \end{equation}
\end{remark}


In addition, we would need to use later the estimate \eqref{eq:1.div-grad} of
$|\Div \vec{u}|$ in terms of $|\nabla\vec{u}|$ below,
where
\begin{equation}
 |\nabla\vec{u}|^2
  =
  \sum_{k,l=1}^n
  \left(
   \frac{\partial u^{k}}{\partial x_l}
  \right)^2,\qquad
  \vec{u}=(u^1,u^2,\ldots,u^n).
\end{equation}
To obtain \eqref{eq:1.div-grad}, we use the following Jensen's inequality
\cite[V.19]{MR2467561}, \cite[Theorem 4.3]{MR0274683}.
\begin{proposition}[{\cite[V.19]{MR2467561}}]
 Let f be a function from $\R$ to $(-\infty,\infty]$. Then $f$ is convex if and only if, 
 \begin{equation}
  \label{eq:1.Jensen}
   f(\lambda_1 x_1+\cdots+\lambda_m x_m)
   \leq
   \lambda_1 f(x_1)+\cdots+\lambda_m f(x_m)
 \end{equation}
 whenever $\lambda_1,\ldots,\lambda_m\geq0$,
 $\lambda_1+\cdots+\lambda_m=1$, and $x_1,\ldots,x_m\in\R$.
\end{proposition}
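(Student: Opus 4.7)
The plan is to establish the equivalence in two directions, with the ``if'' direction being immediate and the ``only if'' direction following from a finite induction on the number of points $m$.

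For the ``if'' direction, I would simply specialize the hypothesis \eqref{eq:1.Jensen} to $m=2$, which yields $f(\lambda_1 x_1 + \lambda_2 x_2) \le \lambda_1 f(x_1) + \lambda_2 f(x_2)$ for all $\lambda_1,\lambda_2\ge0$ with $\lambda_1+\lambda_2=1$. This is precisely the defining inequality of convexity of $f$, so nothing further is required in this direction.

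For the ``only if'' direction I would argue by induction on $m\ge 2$, with the base case $m=2$ being the definition of convexity itself. For the inductive step, assuming the inequality has been established for all convex combinations of $m-1$ points, fix $\lambda_1,\ldots,\lambda_m \ge 0$ with $\sum_{i=1}^m \lambda_i = 1$ and $x_1,\ldots,x_m\in\R$. The case $\lambda_m=1$ is trivial since all other $\lambda_i=0$, so I may assume $\mu:=1-\lambda_m\in(0,1)$ and regroup the convex combination as a $2$-fold one,
\[
\sum_{i=1}^{m} \lambda_i x_i
 \;=\; \mu \sum_{i=1}^{m-1} \frac{\lambda_i}{\mu}\, x_i \;+\; \lambda_m x_m,
\]
where the coefficients $\lambda_i/\mu$ are nonnegative and sum to $1$. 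Applying the $m=2$ case of convexity to the outer combination and then the inductive hypothesis to the inner $(m-1)$-point combination gives
\[
f\!\Bigl(\sum_{i=1}^{m}\lambda_i x_i\Bigr)
 \;\le\; \mu\, f\!\Bigl(\sum_{i=1}^{m-1}\tfrac{\lambda_i}{\mu} x_i\Bigr) + \lambda_m f(x_m)
 \;\le\; \sum_{i=1}^{m-1} \lambda_i f(x_i) + \lambda_m f(x_m),
\]
which closes the induction.

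The only delicate point, and thus the main obstacle to a fully rigorous write-up, is that $f$ is allowed to take the value $+\infty$. The natural convention I would adopt is that if any $f(x_i)=+\infty$ the right-hand side of \eqref{eq:1.Jensen} is interpreted as $+\infty$ and the inequality is automatic; when all values $f(x_i)$ are finite the arithmetic in the regrouping above is unambiguous, and the degenerate case $\mu=0$ is dispatched at the start of the step so that no indeterminate form $0\cdot\infty$ appears in the manipulation.
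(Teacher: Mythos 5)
The paper does not prove this proposition at all; it is stated as a quoted result (Jensen's inequality) with citations to Bourbaki and Wheeden--Zygmund, and the authors immediately apply it to $x\mapsto x^2$ to deduce \eqref{eq:1.div-grad}. So there is no argument of theirs to compare against; what you have written is a self-contained proof of a black-boxed fact.

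That said, your proof is the standard one and is essentially correct: the ``if'' direction is the $m=2$ specialization, and the ``only if'' direction is the usual induction with the regrouping $\sum_{i=1}^m \lambda_i x_i = \mu\sum_{i=1}^{m-1}(\lambda_i/\mu)x_i + \lambda_m x_m$, $\mu=1-\lambda_m$. Two small points worth tightening. First, after dispatching $\lambda_m=1$ you restrict to $\mu\in(0,1)$, but $\lambda_m=0$ (so $\mu=1$) is also possible; that case is harmless --- the two-point convexity step with weights $(1,0)$ is an equality, or one can reduce directly to the $(m-1)$-point hypothesis --- but the write-up should either allow $\mu\in(0,1]$ or dispose of $\lambda_m=0$ alongside $\lambda_m=1$. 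Second, your remark on the extended value $+\infty$ is sound but could be stated more sharply: when all $f(x_i)<\infty$, the effective domain $\{f<\infty\}$ of a convex $f:\R\to(-\infty,\infty]$ is an interval containing all $x_i$, hence also $\sum\lambda_i x_i$, so the left-hand side is automatically finite and no $\infty-\infty$ or $0\cdot\infty$ ambiguity can arise in the induction. With those clarifications the argument is complete.
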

Applying Jensen's inequality \eqref{eq:1.Jensen} to convex function
$x^2$, we have that,
\begin{equation}
 \label{eq:1.Jensen_mean}
 \left(
 \frac{a_1+\cdots+a_n}{n}
 \right)^2
 \leq
 \frac{1}{n}
 \left(
  a_1^2+\cdots+a_n^2
 \right)
\end{equation}
for $a_1,\ldots,a_n\in\R$. Using, $a_j=\frac{\partial u^j}{\partial
x_j}$ in \eqref{eq:1.Jensen_mean}, we obtain,
\begin{equation}
 \label{eq:1.div-grad}
 |\Div\vec{u}|^2
  \leq
  n\sum_{j=1}^n
  \left(
   \frac{\partial u^j}{\partial x_j}
  \right)^2
  \leq
  n|\nabla\vec{u}|^2.
\end{equation}



\section{Homogeneous diffusion case}
\label{sec:2}

In this section, we consider the cases with both diffusion and mobility
being homogeneous. In this case, $D$ is a constant, and without loss of
generality, we choose $\pi\equiv1$ and take $\Omega=[0,1)^n\subset\R^n$.
 
Here, we study the following evolution equation with periodic boundary
conditions.

\begin{equation}
 \label{eq:2.FokkerPlanck}
 \left\{
  \begin{aligned}
&   \frac{\partial f}{\partial t}
   +
   \Div
   \left(
   f\vec{u}
   \right)
   =
   0,
   &\quad
   &x\in\Omega,\quad
   t>0, \\
 &  \vec{u}
   =
   -
   \nabla
   \left(
   D\log f
   +
   \phi(x)
   \right),
   &\quad
   &x\in\Omega,\quad
   t>0, \\
&   f(x,0)=f_0(x),&\quad
   &x\in\Omega.
  \end{aligned}
 \right.
\end{equation}
The free energy $F$ associated with \eqref{eq:2.FokkerPlanck} will take the form :
\begin{equation}
 \label{eq:2.FreeEnergy}
  F[f](t)
  =
  \int_\Omega
  (Df(x,t)(\log f(x,t)-1)+f(x,t)\phi(x))
  \,dx.
\end{equation}
As we had presented in Proposition \ref{prop:1.EnergyLaw}, the following energy law will hold
for any solution $f$ of \eqref{eq:2.FokkerPlanck}:
\begin{equation}
 \label{eq:2.EnergyLaw}
  \frac{d}{dt}F[f](t)
  =
  -
  \int_{\Omega}
  |\vec{u}|^2f\,dx
  =
  -D_{\mathrm{dis}}[f](t).
\end{equation}

In this setting, equation \eqref{eq:2.FokkerPlanck} is  a linear  parabolic equation. One can
use various established methods and techniques  to investigate the long-time asymptotic behavior for a
solution of \eqref{eq:2.FokkerPlanck}. For instance, making the change
of variable
\begin{equation*}
 f(x,t)=g(x,t)
  \exp\left(-\frac{\phi(x)}{D}\right),
\end{equation*}
one may associate the original equation with  a self-adjoint operator $Lg=D\Delta g-\nabla\phi\cdot\nabla
g$ on $L^2(\Omega,e^{-\frac{\phi}{D}}\,dx)$ (See
\cite{epshteyn2021stochastic}).

In this paper, we plan to study the long time behavior of the solutions, especially the 
exponential decay through the investigation of higher order time derivative of the free energy functional.
We want to point out that the current method is related to the entropy method that had been
developed previously for various Fokker-Planck type of equations in
unbounded domains. We consider here bounded domain and our approach takes the full advantage of the kinematic structures, such as looking at the velocity variable $\vec{u}$.

\begin{theorem}
 \label{thm:2}
Given the potential $\phi$ and the domain $\Omega$ in $\R^n$ in
 \eqref{eq:2.FokkerPlanck}, for any $\gamma>0$, there exists
 $\Cr{const:D_Min}\geq1$ which depends on $n$, the potential $\phi$ (in
 terms of the lower bound of the Hessian $\lambda$ defined in
 \eqref{eq:1.Hesse_Potential_Lower_Bounds} and
 $\|\phi\|_{L^\infty(\Omega)}$), the bounds of the initial data
 $\Cr{const:InitMin}$, $\Cr{const:InitMax}$ defined on
 \eqref{eq:1.Initial}, and $\gamma$, such that if \eqref{eq:1.D_Min}
 holds and,
 \begin{equation}
  \label{eq:2.Initial_Free_Energy}
   \int_\Omega
  |\nabla (D\log f_0+\phi(x))|^2
  f_0\,dx
  =:
  \Cr{const:2.InitialEnergy}<\infty,
 \end{equation}
 then, the following dissipation rate decays exponentially in time:
 \begin{equation}
  \label{eq:2.Exponential_Decay_Free_Energy}
  \int_\Omega
   |\vec{u}|^2
   f\,dx
   \leq
   \Cl{const:2.InitialEnergy}
   e^{-\gamma t}.
 \end{equation}
\end{theorem}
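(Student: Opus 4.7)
The plan is to establish a differential inequality $\frac{d}{dt}D_{\mathrm{Dis}}[f](t)\le-\gamma\,D_{\mathrm{Dis}}[f](t)$ and integrate it via Gronwall; since \eqref{eq:2.Initial_Free_Energy} identifies $D_{\mathrm{Dis}}[f](0)=\Cr{const:2.InitialEnergy}$, this immediately yields \eqref{eq:2.Exponential_Decay_Free_Energy}. I set $I(t):=\int_\Omega f|\vec u|^2\,dx$ and $\psi:=D\log f+\phi$, so $\vec u=-\nabla\psi$ and $\partial_t f=f\partial_t\psi/D$.

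First I would differentiate $I$ in time using the continuity equation. Two integrations by parts on the torus---using $\nabla|\vec u|^2=-2(\nabla^2\psi)\vec u$, $\partial_t\vec u=-\nabla\partial_t\psi$, and $\partial_t f=f\partial_t\psi/D$---yield the preliminary expression
\begin{equation*}
  \frac{dI}{dt}=-2\int_\Omega f\,\vec u^{\,T}(\nabla^2\psi)\vec u\,dx-\frac{2}{D}\int_\Omega f(\partial_t\psi)^2\,dx.
\end{equation*}
The key algebraic step is the Bochner/Bakry--\'{E}mery rewriting: decomposing $\nabla^2\psi=D\nabla^2\log f+\nabla^2\phi$ and performing further integrations by parts on the torus (exploiting the symmetry of $\nabla\vec u=-\nabla^2\psi$ together with the pointwise relation $\partial_t\psi=|\vec u|^2+\vec u\cdot\nabla\phi-D\,\Div\vec u$), one verifies the identity
\begin{equation*}
  \int_\Omega f\,\vec u^T(\nabla^2\log f)\vec u\,dx+\frac{1}{D^2}\int_\Omega f(\partial_t\psi)^2\,dx=\int_\Omega f|\nabla\vec u|^2\,dx,
\end{equation*}
which recasts the previous expression into the coercive $\Gamma_2$-form
\begin{equation*}
  \frac{dI}{dt}=-2\Cr{const:D_Min}\int_\Omega f|\nabla\vec u|^2\,dx-2\int_\Omega f\,\vec u^T(\nabla^2\phi)\vec u\,dx.
\end{equation*}

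Next I would apply the bound $\nabla^2\phi\ge-\lambda I$ from \eqref{eq:1.Hesse_Potential_Lower_Bounds}, so that the second integral is bounded above by $2\lambda I(t)$. For the first, observe that $\vec u=-\nabla\psi$ has zero componentwise mean on $\Omega$ by periodicity; the Poincar\'e inequality on the torus $[0,1)^n$ then gives $\int_\Omega|\vec u|^2\,dx\le C_P\int_\Omega|\nabla\vec u|^2\,dx$ with $C_P$ depending only on $n$. Combining this with the two-sided bound $e^{-\Cr{const:log_f}}\le f\le e^{\Cr{const:log_f}}$ from Corollary~\ref{cor:1.bounds_of_log_f}---which, in the present homogeneous setting ($\Cr{const:grad_D}=0$), depends only on $n$, $\|\phi\|_{L^\infty(\Omega)}$, $\Cr{const:InitMin}$, $\Cr{const:InitMax}$ and, crucially, is \emph{independent} of $\Cr{const:D_Min}$---I obtain $\int_\Omega f|\nabla\vec u|^2\,dx\ge(e^{-2\Cr{const:log_f}}/C_P)\,I(t)$. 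Hence
\begin{equation*}
  \frac{dI}{dt}\le\Bigl(2\lambda-\frac{2\Cr{const:D_Min}\,e^{-2\Cr{const:log_f}}}{C_P}\Bigr)I(t),
\end{equation*}
and choosing $\Cr{const:D_Min}\ge\tfrac12 C_P e^{2\Cr{const:log_f}}(\gamma+2\lambda)$ (which depends only on the parameters listed in the theorem) produces $\frac{dI}{dt}\le-\gamma I(t)$, so Gronwall gives \eqref{eq:2.Exponential_Decay_Free_Energy}.

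The hardest step will be the Bochner/Bakry--\'{E}mery rewriting of $\frac{dI}{dt}$ into the coercive $-2\Cr{const:D_Min}\int f|\nabla\vec u|^2$ form: it requires several careful integration-by-parts manipulations on the torus (where periodicity is essential, since no boundary terms survive), the symmetry of the Hessian $\nabla\vec u=-\nabla^2\psi$, and the equation-derived expression for $\partial_t\psi$. Once this coercive identity is in hand the rest is clean: the convexity-defect bound on $\phi$ controls the drift term, the $\Cr{const:D_Min}$-independent $L^\infty$ bounds on $f$ bridge weighted and unweighted $L^2$ integrals, and the torus Poincar\'e inequality closes the estimate. The ``$\Cr{const:D_Min}$ large'' hypothesis is precisely what is needed for the diffusive dissipation to dominate the anti-convexity $\lambda$ and achieve any prescribed rate $\gamma$.
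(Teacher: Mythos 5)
Your proposal is correct and takes essentially the same approach as the paper: compute the second time derivative of the free energy (your Bochner identity is exactly the content of the cited Proposition 2.9, and indeed checks out upon integrating by parts), combine the Hessian lower bound $\nabla^2\phi\geq-\lambda I$ with a weighted Poincar\'e inequality for $\vec{u}$ exploiting $\overline{\vec{u}}=0$ and the $\Cr{const:D_Min}$-independent $L^\infty$ bounds on $f$, and close with Gronwall after taking $D$ large. The only minor deviations are cosmetic: you re-derive the second-derivative identity rather than citing it, and you pass through the plain $L^2$ torus Poincar\'e inequality rather than the paper's $L^{p^*}$ Sobolev route (which the paper sets up chiefly because it is reused for the cubic terms in Sections 3--4).
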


\begin{remark}
 The above theorem states that for given $\phi$ and the domain $\Omega$, one can obtain any decay rate $\gamma$ by choosing the diffusion constant $D$ large enough. Conversely,  from the 
 proof of the theorem, we can show that for large enough $D$ (estimate
 is given in \eqref{C_3bounds} later), the dissipation rate of the system will decay
 exponentially in time.
\end{remark}

\begin{remark}
 In the conventional entropy method, the function $\phi$ is usually
 assumed to be convex, and the domain is the whole space.  In this
 paper, with a bounded domain subject to the periodic boundary
 conditions, we can assume the Hessian of $\phi$ to be bounded below.
 In particular, we can treat cases with non-convex function $\phi$.
\end{remark}

To prove Theorem \ref{thm:2}, we first recall the second derivative of
the free energy \cite{MR4506846}.

\begin{proposition}%
 [{\cite[Proposition 2.9]{MR4506846}}]
 For the free energy  $F$  defined in  \eqref{eq:2.FreeEnergy}, and  $f$
 be a solution of \eqref{eq:2.FokkerPlanck}, we obtain: 
\begin{equation}
  \label{eq:2.Second_Derivative_Free_Energy}
  \frac{d^2}{dt^2}F[f](t)
   =
   2
   \int_\Omega
   (\nabla^2\phi(x)\vec{u}\cdot\vec{u})f\,dx
   +
   2
   \int_\Omega
   D|\nabla \vec{u}|^2
   f
   \,dx.
 \end{equation}
\end{proposition}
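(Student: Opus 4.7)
The plan is to differentiate the energy identity $\frac{d}{dt}F[f]=-\int_\Omega f|\vec{u}|^2\,dx$ from Proposition~\ref{prop:1.EnergyLaw} one more time and reorganize the result by integration by parts, exploiting throughout that $\vec{u}=-\nabla(D\log f+\phi)$ is a gradient vector field.

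Starting from
\begin{equation*}
\frac{d^2}{dt^2}F[f]=-\int_\Omega f_t|\vec{u}|^2\,dx-2\int_\Omega f\vec{u}\cdot\vec{u}_t\,dx,
\end{equation*}
I would first compute $\vec{u}_t$ explicitly. Differentiating the defining relation of $\vec{u}$ in $t$ (both $D$ and $\phi$ are time-independent) and substituting $f_t=-\Div(f\vec{u})$ together with the algebraic identity $D\nabla\log f=-\vec{u}-\nabla\phi$ should yield
\begin{equation*}
\vec{u}_t=D\nabla(\Div\vec{u})-\nabla|\vec{u}|^2-\nabla(\vec{u}\cdot\nabla\phi).
\end{equation*}

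Next I would pair this with $-2f\vec{u}$ and integrate over $\Omega$, treating the three pieces separately. For the $D\nabla(\Div\vec{u})$ piece, a double integration by parts (commuting mixed partials, using periodicity) produces two contributions: $-2D\int_\Omega f(\partial_k u^i)(\partial_i u^k)\,dx$, which collapses to $-2D\int_\Omega f|\nabla\vec{u}|^2\,dx$ because the curl-free property $\partial_i u^j=\partial_j u^i$ turns the contraction into $|\nabla\vec{u}|^2$; and $-2D\int_\Omega(\partial_k f)u^i\partial_i u^k\,dx$, which, using $D\nabla f=-f(\vec{u}+\nabla\phi)$ and the gradient-field identity $(\vec{u}\cdot\nabla)\vec{u}=\tfrac12\nabla|\vec{u}|^2$, rewrites as $\int_\Omega f\vec{u}\cdot\nabla|\vec{u}|^2\,dx+\int_\Omega f\nabla\phi\cdot\nabla|\vec{u}|^2\,dx$. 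For the $\nabla(\vec{u}\cdot\nabla\phi)$ piece, expanding $\nabla(\vec{u}\cdot\nabla\phi)=(\nabla\vec{u})^\top\nabla\phi+(\nabla^2\phi)\vec{u}$ and again using the gradient-field identity gives $-\int_\Omega f\nabla\phi\cdot\nabla|\vec{u}|^2\,dx-2\int_\Omega f(\nabla^2\phi)\vec{u}\cdot\vec{u}\,dx$. Finally, the remaining scalar term $-\int_\Omega f_t|\vec{u}|^2\,dx$ equals $-\int_\Omega f\vec{u}\cdot\nabla|\vec{u}|^2\,dx$ by the continuity equation and one integration by parts. Adding everything, the two $\vec{u}\cdot\nabla|\vec{u}|^2$ contributions cancel against each other, the two $\nabla\phi\cdot\nabla|\vec{u}|^2$ contributions cancel against each other, and what remains is precisely $2\int_\Omega D|\nabla\vec{u}|^2 f\,dx+2\int_\Omega(\nabla^2\phi\,\vec{u}\cdot\vec{u})f\,dx$.

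The main obstacle is purely organizational: intermediate quantities of the form $\vec{u}\cdot\nabla|\vec{u}|^2$ and $\nabla\phi\cdot\nabla|\vec{u}|^2$ arise repeatedly with opposite signs, and tracking them precisely so that the cancellations can be identified requires careful bookkeeping. The whole computation closes only because of the curl-free structure of $\vec{u}$: it is this property that collapses $(\partial_i u^j)(\partial_j u^i)$ into $|\nabla\vec{u}|^2$ and reduces $(\vec{u}\cdot\nabla)\vec{u}$ to $\tfrac12\nabla|\vec{u}|^2$, converting what would otherwise be a tangle of derivatives into the clean symmetric quadratic expression on the right-hand side.
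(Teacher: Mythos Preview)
The paper does not prove this proposition itself; it simply cites \cite[Proposition~2.9]{MR4506846}. Your strategy---differentiate the first-order energy law, compute $\vec{u}_t$ explicitly, then integrate by parts using that $\vec{u}$ is curl-free---is exactly the standard derivation and is correct in outline.

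That said, the bookkeeping in your write-up contains sign slips and one omitted term. Pairing $-2f\vec{u}$ with the $D\nabla(\Div\vec{u})$ piece and integrating by parts once (after commuting partials) gives
\[
+2D\int_\Omega f(\partial_k u^i)(\partial_i u^k)\,dx \;+\; 2D\int_\Omega(\partial_k f)\,u^i\,\partial_i u^k\,dx,
\]
with plus signs, not the minus signs you wrote; inserting $D\nabla f=-f(\vec{u}+\nabla\phi)$ into the second summand then yields $-\int f\,\vec{u}\cdot\nabla|\vec{u}|^2-\int f\,\nabla\phi\cdot\nabla|\vec{u}|^2$. Likewise, the $-\nabla(\vec{u}\cdot\nabla\phi)$ piece of $\vec{u}_t$, paired with $-2f\vec{u}$, contributes $+\int f\,\nabla\phi\cdot\nabla|\vec{u}|^2+2\int f\,(\nabla^2\phi)\vec{u}\cdot\vec{u}$, again opposite to what you wrote. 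Finally, you never treat the middle piece $-\nabla|\vec{u}|^2$ of $\vec{u}_t$ at all; paired with $-2f\vec{u}$ it contributes $+2\int f\,\vec{u}\cdot\nabla|\vec{u}|^2$. With these three corrections the $\vec{u}\cdot\nabla|\vec{u}|^2$ contributions sum as $-1-1+2=0$ and the $\nabla\phi\cdot\nabla|\vec{u}|^2$ contributions as $-1+1=0$, leaving precisely the claimed identity. So the method is right, but as written your intermediate signs do not actually add up to your stated conclusion.
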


Since $\nabla^2\phi$ might be non-positive, our plan is to use the  second term, 
 $\int_{\Omega} D|\nabla\vec{u}|^2f \, dx$ to control the right hand side of \eqref{eq:2.Second_Derivative_Free_Energy}. 
 We first establish the following 
inequality.

\begin{lemma}
 \label{lem:2.Sobolev}
 Let $f$ be a solution of \eqref{eq:2.FokkerPlanck}, and let $\vec{u}$
 be given as in \eqref{eq:2.FokkerPlanck}.
 There exists $\Cl{const:2.Sobolev}>0$ which depends only on $n$, $f_0$ (in
 terms of $\Cr{const:InitMin}$, $\Cr{const:InitMax}$ defined in
 \eqref{eq:1.Initial}), and $\|\phi\|_{L^\infty(\Omega)}$ such that
 \begin{equation}
  \label{eq:2.SobolevType}
  \left(
   \int_\Omega
   |\vec{u}|^{p^*}f\,dx
  \right)^{\frac{1}{p^*}}
  \leq
  \Cr{const:2.Sobolev}
  \left(
   \int_\Omega
   |\nabla\vec{u}|^{2}f\,dx
  \right)^{\frac{1}{2}},
 \end{equation}
 where 
 $2< p^*<\infty$ for $n=1,2$,
 and $\frac{1}{p^*}=\frac{1}{2}-\frac{1}{n}$ for $n\geq3$.
\end{lemma}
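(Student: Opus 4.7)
The plan is to reduce \eqref{eq:2.SobolevType} to the classical Sobolev--Poincar\'e inequality on the torus, by exploiting two structural features: $\vec u$ is the spatial gradient of a periodic function (so its mean over $\Omega$ vanishes), and Corollary~\ref{cor:1.bounds_of_log_f} supplies uniform two-sided bounds on $f$ that render the $f$-weighted $L^p$ norms equivalent to the unweighted ones.

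The first and crucial observation is that $\vec u = -\nabla(D\log f + \phi)$ is the gradient of a smooth periodic function on $\Omega$, hence
\[
    \int_\Omega \vec u(x,t)\,dx = 0
\]
by componentwise integration and cancellation on the opposite faces of $\Omega$. Each component of $\vec u$ is therefore a zero-mean $W^{1,2}$ function on the $n$-torus, so the standard Poincar\'e inequality gives $\|\vec u\|_{L^2(\Omega)} \leq C\|\nabla\vec u\|_{L^2(\Omega)}$, and the Sobolev embedding $W^{1,2}(\Omega)\hookrightarrow L^{p^*}(\Omega)$ for $p^*$ in the range stated in the lemma, combined with that Poincar\'e bound, yields
\[
    \|\vec u\|_{L^{p^*}(\Omega)} \leq C\|\nabla\vec u\|_{L^2(\Omega)},
\]
with $C$ depending only on $n$ and $p^*$.

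Next I would bridge between weighted and unweighted norms. Corollary~\ref{cor:1.bounds_of_log_f} yields $e^{-\Cr{const:log_f}} \leq f \leq e^{\Cr{const:log_f}}$, and in the homogeneous setting of this section $\nabla D \equiv 0$, so $\Cr{const:log_f}$ depends only on $n$, $\Cr{const:InitMin}$, $\Cr{const:InitMax}$, and $\|\phi\|_{L^\infty(\Omega)}$. From the upper bound on $f$,
\[
    \left(\int_\Omega |\vec u|^{p^*} f\,dx\right)^{1/p^*} \leq e^{\Cr{const:log_f}/p^*}\|\vec u\|_{L^{p^*}(\Omega)},
\]
and from the lower bound,
\[
    \|\nabla\vec u\|_{L^2(\Omega)} \leq e^{\Cr{const:log_f}/2}\left(\int_\Omega |\nabla\vec u|^2 f\,dx\right)^{1/2}.
\]
Chaining these with the Sobolev--Poincar\'e step produces \eqref{eq:2.SobolevType} with $\Cr{const:2.Sobolev} := C\,e^{\Cr{const:log_f}(1/p^*+1/2)}$, which depends only on the quantities advertised in the lemma. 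The only non-routine ingredient is the zero-mean identity for $\vec u$: without the gradient structure one could not dispense with an $L^2$ term on the right-hand side in the periodic setting. Every other step is a direct application of periodic Sobolev--Poincar\'e embeddings together with the uniform $L^\infty$ control furnished by Corollary~\ref{cor:1.bounds_of_log_f}.
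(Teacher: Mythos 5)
Your proposal is correct and follows essentially the same route as the paper's proof: observe that $\vec u$ is the gradient of a smooth periodic function so it has zero mean, apply a Sobolev--Poincar\'e inequality on the torus to pass from $\|\nabla\vec u\|_{L^2}$ to $\|\vec u\|_{L^{p^*}}$, and then use the uniform two-sided bounds on $f$ from Corollary~\ref{cor:1.bounds_of_log_f} to convert between weighted and unweighted norms, absorbing the factor $e^{\Cr{const:log_f}(1/p^*+1/2)}$ into the constant. The only cosmetic difference is that you split the Sobolev--Poincar\'e step into a Poincar\'e bound followed by a Sobolev embedding, whereas the paper invokes the zero-mean Sobolev inequality $\|\vec v-\overline{\vec v}\|_{L^{p^*}}\leq C\|\nabla\vec v\|_{L^2}$ in a single stroke.
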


\begin{proof}
 By the Sobolev inequality (\cite[Lemma 7.12,
 7.16]{MR1814364},\cite[p.313]{MR2759829}), there is
 $\Cl{const:Sobolev}>0$ which depends only on $n$ such that,
 \begin{equation*}
  \left(
  \int_\Omega
  |\vec{v}-\overline{\vec{v}}|^{p^*}\,dx
  \right)^{\frac{1}{p^*}}
  \leq
  \Cr{const:Sobolev}
  \left(
  \int_\Omega
  |\nabla\vec{v}|^{2}\,dx
  \right)^{\frac{1}{2}}
 \end{equation*}
 for $\vec{v}\in W^{1,2}(\Omega)$, where $\overline{\vec{v}}$ is the
 integral mean of $\vec{v}$, namely
 \begin{equation*}
  \overline{\vec{v}}
   =
   \frac{1}{|\Omega|}
   \int_\Omega
   \vec{v}\,dx.
 \end{equation*}
Here,
 $2< p^*<\infty$ for $n=1,2$,
 and $\frac{1}{p^*}=\frac{1}{2}-\frac{1}{n}$ for $n\geq3$.
 \par Since $\vec{u}$ has the scalar potential, we can compute,
 \begin{equation}
 \begin{split}
  \int_\Omega
   \vec{u}
   \,dx
 &  =
   -
   \int_\Omega
   \nabla
   \left(
   D\log f
   +
   \phi(x)
   \right)
   \,dx
   \\
   &
   =
   -
   \int_{\partial\Omega}
   \left(
   D\log f
   +
   \phi(x)
   \right)
   \nu
   \,d\sigma
   =
   \vec{0}.
   \end{split}
 \end{equation}
 Hence we obtain by taking $\vec{v}=\vec{u}$ that,
 \begin{equation*}
  \begin{split}
   \left(
   \int_\Omega
   |\vec{u}|^{p^*}f\,dx
   \right)^{\frac{1}{p^*}}
   &\leq
   \left(
   \max_{(x,t)\in\Omega\times[0,\infty)}f
   \right)^{\frac{1}{p^*}}
   \left(
   \int_\Omega
   |\vec{u}|^{p^*}\,dx
   \right)^{\frac{1}{p^*}}
   \\
   &\leq
   \Cr{const:Sobolev}
   \left(
   \max_{(x,t)\in\Omega\times[0,\infty)}f
   \right)^{\frac{1}{p^*}}
   \left(
   \int_\Omega
   |\nabla\vec{u}|^{2}\,dx
   \right)^{\frac{1}{2}}
   \\
   &\leq
   \Cr{const:Sobolev}
   \frac{(\max_{(x,t)\in\Omega\times[0,\infty)}f)^{\frac{1}{p^*}}}{(\min_{(x,t)\in\Omega\times[0,\infty)}f)^{\frac{1}{2}}}
   \left(
   \int_\Omega
   |\nabla\vec{u}|^{2}f\,dx
   \right)^{\frac{1}{2}}.
  \end{split} 
 \end{equation*}
 Note from \eqref{eq:1.bounds_of_log_f} (see Remark~\ref{rk1_8}) that $e^{-\Cr{const:log_f}}\leq
 f(x,t)\leq e^{\Cr{const:log_f}}$ for all $x\in\Omega$ and $t>0$. This
 inequality yields
 \begin{equation*}
  \frac{(\max_{(x,t)\in\Omega\times[0,\infty)}f)^{\frac{1}{p^*}}}{(\min_{(x,t)\in\Omega\times[0,\infty)}f)^{\frac{1}{2}}}
   \leq
   e^{\Cr{const:log_f}\left(
		       \frac{1}{p^*}+\frac{1}{2}
		      \right)}.
 \end{equation*}
 Letting
 $\Cr{const:2.Sobolev}=\Cr{const:Sobolev}e^{\Cr{const:log_f}\left(
 \frac{1}{p^*}+\frac{1}{2} \right)}$, we obtain
 \eqref{eq:2.SobolevType}.
\end{proof}

\begin{remark}
 In this section, $D$ is constant, which means $\nabla D=0$ hence
 $\Cr{const:log_f}$ depends on
 the $L^\infty$ norms of the initial data $f_0$ in term of $\Cr{const:InitMin}$, $\Cr{const:InitMax}$ and potential 
 $\|\phi\|_{L^\infty(\Omega)}$,  but is independent of $\Cr{const:grad_D}$
 defined in \eqref{eq:1.grad_D}.
\end{remark}

\begin{remark}
In this proof, it is important that $\vec{u}$ is a potential
 gradient. For \eqref{eq:1.Fokker-Planck}, when the mobility $\pi(x,t)$
 is not constant, the velocity $\vec{u}$ is not a potential gradient
 anymore, and we will have to derive more estimates as we will do in
 Section \ref{sec:4}.
\end{remark}

From here, we can show the following Poincare type inequality for the
velocity $\vec{u}$.

\begin{lemma}
 \label{lem:2.Poincare} Let $f$ be a solution of
 \eqref{eq:2.FokkerPlanck}, and let $\vec{u}$ be given as in
 \eqref{eq:2.FokkerPlanck}.  There exists a constant
 $\Cl{const:2.Poincare}>0$ which depends only on $n$, the $L^\infty$ norms
 of the initial data $f_0$ (in terms of $\Cr{const:InitMin}$,
 $\Cr{const:InitMax}$ defined on \eqref{eq:1.Initial}), and
 $\|\phi\|_{L^\infty(\Omega)}$ such that,
 \begin{equation}
  \label{eq:2.PoincareType}
  \int_\Omega
   |\vec{u}|^2f\,dx
   \leq
   \Cr{const:2.Poincare}
   \int_\Omega
   |\nabla\vec{u}|^2f\,dx.
 \end{equation}
\end{lemma}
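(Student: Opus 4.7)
The plan is to combine the Sobolev-type inequality just proved in Lemma~\ref{lem:2.Sobolev} with Hölder's inequality, exploiting the fact that $f\,dx$ is a probability measure on $\Omega$ by the mass conservation \eqref{eq:1.Mass_Conservation}. Since $p^*>2$, on a probability measure space the $L^2$ norm is controlled by the $L^{p^*}$ norm, and this will turn the Sobolev bound into the desired Poincaré-type bound.

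Concretely, I would first invoke \eqref{eq:1.Mass_Conservation} to note that $\int_\Omega f\,dx=1$, so that $d\mu:=f\,dx$ is a probability measure. Then, by Hölder's inequality with exponents $r=p^*/2$ and $r'=p^*/(p^*-2)$ applied to $|\vec{u}|^{2}$ and $1$,
\begin{equation}
\int_\Omega |\vec{u}|^2 f\,dx
\leq
\left(\int_\Omega |\vec{u}|^{p^*} f\,dx\right)^{2/p^*}
\left(\int_\Omega f\,dx\right)^{(p^*-2)/p^*}
=
\left(\int_\Omega |\vec{u}|^{p^*} f\,dx\right)^{2/p^*}.
\end{equation}
(Equivalently, this is the monotonicity of $L^p$ norms on a probability measure space.) Plugging in the Sobolev estimate \eqref{eq:2.SobolevType} from Lemma~\ref{lem:2.Sobolev} and squaring yields
\begin{equation}
\int_\Omega |\vec{u}|^2 f\,dx
\leq
\Cr{const:2.Sobolev}^{\,2}
\int_\Omega |\nabla\vec{u}|^2 f\,dx,
\end{equation}
so we may take $\Cr{const:2.Poincare}:=\Cr{const:2.Sobolev}^{\,2}$, whose dependencies on $n$, $\Cr{const:InitMin}$, $\Cr{const:InitMax}$, and $\|\phi\|_{L^\infty(\Omega)}$ are inherited directly from Lemma~\ref{lem:2.Sobolev}.

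There is essentially no obstacle once Lemma~\ref{lem:2.Sobolev} is in place: the only subtlety is to remember that $\vec u$ has zero mean (established inside the proof of Lemma~\ref{lem:2.Sobolev} via the periodic boundary condition and the potential structure of $\vec u$), which is what made the Sobolev--Poincaré inequality applicable in the first place. Since we are quoting the resulting Sobolev estimate directly, no further work is needed here, and the exponent requirement $p^*>2$ (which holds in every dimension by the statement of Lemma~\ref{lem:2.Sobolev}) is precisely what enables the Hölder step on the probability measure $f\,dx$.
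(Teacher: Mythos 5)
Your proof is correct and matches the paper's argument essentially line for line: both use mass conservation \eqref{eq:1.Mass_Conservation} to treat $f\,dx$ as a probability measure, apply Hölder (monotonicity of $L^p$ norms) to pass from the $L^2(\!f\,dx)$ norm of $\vec u$ to the $L^{p^*}(\!f\,dx)$ norm, invoke Lemma~\ref{lem:2.Sobolev}, and take $\Cr{const:2.Poincare}=\Cr{const:2.Sobolev}^2$. The only cosmetic difference is that the paper writes the Hölder step after taking a square root and then separately notes that for $n=1,2$ one may fix $p^*=6$, a choice you cover implicitly by remarking that $p^*>2$ is available in every dimension.
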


\begin{proof}
 We show \eqref{eq:2.PoincareType} for the case $n\geq 3$.  First, we
 use the H\"older inequality and \eqref{eq:1.Mass_Conservation} that
 \begin{equation}
  \left(
   \int_\Omega
   |\vec{u}|^2f\,dx
  \right)^{\frac{1}{2}}
  \leq
  \left(
   \int_\Omega
   |\vec{u}|^{p^*}f\,dx
  \right)^{\frac{1}{p^*}}
  \left(
   \int_\Omega
   f\,dx
  \right)^{\frac{1}{2}-\frac{1}{p^*}}
  =
  \left(
   \int_\Omega
   |\vec{u}|^{p^*}f\,dx
  \right)^{\frac{1}{p^*}}
 \end{equation}
 where we select $p^*>2$ to satisfy
 $\frac{1}{p^*}=\frac{1}{2}-\frac{1}{n}$ so we can employ the
 Sobolev's inequality  \eqref{eq:2.SobolevType}. Next, we use
 \eqref{eq:2.SobolevType} to have that,
 \begin{equation}
  \left(
   \int_\Omega
   |\vec{u}|^{p^*}f\,dx
  \right)^{\frac{1}{p^*}}
  \leq
  \Cr{const:2.Sobolev}
  \left(
   \int_\Omega
   |\nabla\vec{u}|^{2}f\,dx
  \right)^{\frac{1}{2}}.
 \end{equation}
 Letting $\Cr{const:2.Poincare}=\Cr{const:2.Sobolev}^2$, we obtain
 \eqref{eq:2.PoincareType}. Note that $\Cr{const:log_f}$ depends on
 $\Cr{const:InitMin}$, $\Cr{const:InitMax}$ and
 $\|\phi\|_{L^\infty(\Omega)}$ so we can take $\Cr{const:2.Poincare}$
 which depends only on $n$, $\Cr{const:InitMin}$,
 $\Cr{const:InitMax}$ and $\|\phi\|_{L^\infty(\Omega)}$.

 For the cases $n=1,2$,  \eqref{eq:2.SobolevType} holds for any
 $2<p^*<\infty$. If, for instance,  by  taking  $p^*=6$, we will be able to use
 the exact same estimates as in the case $n=3$ above to get the result.
\end{proof}

Now, we are in a position to derive the time derivative of the integral
$|\vec{u}|^2$.

\begin{proposition}
 Let $f$ be a solution of \eqref{eq:2.FokkerPlanck}, and let $\vec{u}$
 be given as in \eqref{eq:2.FokkerPlanck}. For any $\gamma>0$, there
 exists $\Cr{const:D_Min}\geq1$ which depends only on $n$,
 $\lambda$, $\gamma$, $\Cr{const:InitMin}$, $\Cr{const:InitMax}$ defined
 on \eqref{eq:1.Initial}, and $\|\phi\|_{L^\infty(\Omega)}$ such that
 \begin{equation}
  \label{eq:2.Differential_Inequality_for_the_second_derivative_of_the_Free_Energy}
  \frac{d}{dt}
   \left(
    \int_\Omega|\vec{u}|^2
    f\,dx
   \right)
    \leq
    -\gamma
    \int_\Omega
    |\vec{u}|^2
    f\,dx.
 \end{equation}
\end{proposition}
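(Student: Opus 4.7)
The plan is to differentiate the dissipation and recognize it (up to sign) as the second time-derivative of the free energy, so that Proposition giving \eqref{eq:2.Second_Derivative_Free_Energy} can be plugged in directly. Since the energy law \eqref{eq:2.EnergyLaw} reads $\frac{d}{dt}F[f] = -\int_\Omega |\vec{u}|^2 f\,dx$, differentiating once more gives
\begin{equation}
\frac{d}{dt}\int_\Omega |\vec{u}|^2 f\,dx = -\frac{d^2}{dt^2}F[f](t) = -2\int_\Omega (\nabla^2\phi\,\vec{u}\cdot\vec{u})\,f\,dx - 2D\int_\Omega |\nabla\vec{u}|^2 f\,dx.
\end{equation}

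Next I would control the first term from below using the Hessian assumption \eqref{eq:1.Hesse_Potential_Lower_Bounds}, which gives $\nabla^2\phi\,\vec{u}\cdot\vec{u}\geq -\lambda |\vec{u}|^2$, and hence $-2\int_\Omega (\nabla^2\phi\,\vec{u}\cdot\vec{u})f\,dx \leq 2\lambda \int_\Omega |\vec{u}|^2 f\,dx$. For the second term I would invoke the Poincar\'e-type estimate \eqref{eq:2.PoincareType} of Lemma~\ref{lem:2.Poincare} in the form $\int_\Omega |\nabla\vec{u}|^2 f\,dx \geq \Cr{const:2.Poincare}^{-1}\int_\Omega |\vec{u}|^2 f\,dx$. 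Combining the two bounds yields
\begin{equation}
\frac{d}{dt}\int_\Omega |\vec{u}|^2 f\,dx \leq \left(2\lambda - \frac{2D}{\Cr{const:2.Poincare}}\right)\int_\Omega |\vec{u}|^2 f\,dx.
\end{equation}

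To reach the target decay, I would simply require
\begin{equation}
\label{C_3bounds}
\Cr{const:D_Min} \geq \max\!\left(1,\ \Cr{const:2.Poincare}\left(\lambda + \tfrac{\gamma}{2}\right)\right),
\end{equation}
so that $D \geq \Cr{const:D_Min}$ forces the prefactor above to be $\leq -\gamma$, which is exactly \eqref{eq:2.Differential_Inequality_for_the_second_derivative_of_the_Free_Energy}.

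The one place where care is required, and which I would view as the main obstacle, is the dependence of constants. Specifically, the coefficient $\Cr{const:2.Poincare}$ comes from Lemma~\ref{lem:2.Poincare}, whose proof goes through $\Cr{const:log_f}$ from Corollary~\ref{cor:1.bounds_of_log_f}; a priori one might worry this introduces a hidden dependence on $D$ that would make the choice \eqref{C_3bounds} circular. However, in the homogeneous diffusion setting we have $\nabla D = 0$, so $\Cr{const:grad_D} = 0$, and Corollary~\ref{cor:1.bounds_of_log_f} asserts that $\Cr{const:log_f}$ is independent of $\Cr{const:D_Min}$. Hence $\Cr{const:2.Poincare}$ depends only on $n$, $\Cr{const:InitMin}$, $\Cr{const:InitMax}$, and $\|\phi\|_{L^\infty(\Omega)}$, and the selection \eqref{C_3bounds} of $\Cr{const:D_Min}$ is legitimate and has the stated dependencies.
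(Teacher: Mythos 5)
Your proof is correct and follows essentially the same route as the paper: differentiate the energy law, insert the formula \eqref{eq:2.Second_Derivative_Free_Energy}, bound the Hessian term by \eqref{eq:1.Hesse_Potential_Lower_Bounds}, bound the gradient term from below via the Poincar\'e-type inequality \eqref{eq:2.PoincareType}, and then choose $\Cr{const:D_Min}$ large enough so that $-2\lambda + 2\Cr{const:D_Min}/\Cr{const:2.Poincare}\geq\gamma$, which is exactly the threshold you wrote. Your concluding remark on the non-circularity of the dependence on $\Cr{const:2.Poincare}$ through $\Cr{const:log_f}$ is a correct and worthwhile clarification that the paper leaves implicit.
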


\begin{proof}
 From the second derivative of the free energy
 \eqref{eq:2.Second_Derivative_Free_Energy} and the lower bounds of
 $\nabla^2\phi$ \eqref{eq:1.Hesse_Potential_Lower_Bounds}, we have
 \begin{equation}
  \frac{d^2}{dt^2}F[f](t)
   \geq
   -2\lambda
   \int_\Omega|\vec{u}|^2f\,dx
   +
   2D
   \int_\Omega|\nabla\vec{u}|^2f\,dx.
 \end{equation}
 Next, we use the Poincare type inequality \eqref{eq:2.PoincareType}
 that
 \begin{equation}
  \int_\Omega|\nabla\vec{u}|^2f\,dx
   \geq
   \frac{1}{\Cr{const:2.Poincare}}
   \int_\Omega|\vec{u}|^2f\,dx.
 \end{equation}
 Together with the two inequalities, we obtain
 \begin{equation}
  \frac{d^2}{dt^2}F[f](t)
   \geq
   \left(
    -2\lambda
    +
    \frac{2D}{\Cr{const:2.Poincare}}
   \right)
   \int_\Omega|\vec{u}|^2f\,dx.
 \end{equation}
 Choose $\Cr{const:D_Min}\geq1$ sufficiently large such that
 \begin{equation}
 \label{C_3bounds}
  -2\lambda
   +
   \frac{2\Cr{const:D_Min}}{\Cr{const:2.Poincare}}
   \geq
   \gamma.
 \end{equation}
 Then, from the energy law \eqref{eq:2.EnergyLaw}, we have that,
 \begin{equation}
  -\frac{d}{dt}
   \left(
    \int_\Omega
    |\vec{u}|^2f
    \,dx
   \right)
   \geq
   \gamma
   \int_\Omega
   |\vec{u}|^2f
   \,dx.
 \end{equation}
 This finishes the proof of the Proposation.
\end{proof}
\par Finally, we are in position to prove the main result of this Section, Theorem~\ref{thm:2}.
\begin{proof}%
 [Proof of Theorem \ref{thm:2}]
 Applying the Gronwall inequality to the differential inequality
 \eqref{eq:2.Differential_Inequality_for_the_second_derivative_of_the_Free_Energy}, we obtain
 \begin{equation}
  \int_\Omega
   |\vec{u}|^2f
   \,dx
   \leq
   e^{-\gamma t}
   \int_\Omega
   |
   \nabla
   \left(
    D\log f_0
    +
    \phi(x)
    \right)
   |^2f_0
   \,dx.
 \end{equation}
 Using \eqref{eq:2.Initial_Free_Energy}, we obtain
 \eqref{eq:2.Exponential_Decay_Free_Energy}.
\end{proof}

In Section \ref{sec:2}, in the case that $D$ and $\pi$ are constants, we
derive the exponential decay of the time derivative of the free energy
$F$ by construction of the differential inequality
\eqref{eq:2.Differential_Inequality_for_the_second_derivative_of_the_Free_Energy}. The
crucial idea is to use the Poincare type inequality
\eqref{eq:2.PoincareType} in Lemma \ref{lem:2.Poincare}.
In the following sections, the Poincare type inequality will also play crucial roles in the study.

\begin{remark}
 We emphasize that the above argument is based on the kinematic structure of the equation
 \eqref{eq:1.Continuity_Equation} and the velocity $\vec{u}$. This is in consistent with the conventional
 entropy methods in the whole domain cases, which work on the whole PDE itself. This approach is much 
 straightforward and uses the physical relevant quantities.
\end{remark}

\begin{remark}
 The result in Theorem \ref{thm:2} shows that for a given decay rate
 $\gamma>0$, we may find the class of equations \eqref{eq:2.FokkerPlanck}
 with sufficiently large diffusion $D$ and bounded initial data $f_0$ (in
 terms of $D_{\mathrm{dis}}[f](0)<\infty$), then the system will evolve
 exponentially towards equilibrium.  In particularly, we prove the
 exponential decay property of dissipation rate
 $D_{\mathrm{dis}}[f](t)$, which appears in \eqref{eq:2.EnergyLaw}, with
 the given rate $\gamma$.  Conversely the proof of the theorem also give
 the sufficient condition in $D$ for the exponential decay for a system
 with bounded initial data. Moreover it is open question whether the result holds
 \emph{without assumption for the diffusion coefficient
 $D\geq\Cr{const:D_Min}$ to be large enough}.
\end{remark}


\section{Inhomogeneous diffusion case}
\label{sec:3}

%
In this section, we consider the following
evolution equation with inhomogeneous diffusion while the mobility remaining a constant.
\begin{equation}
 \label{eq:3.FokkerPlanck}
 \left\{
  \begin{aligned}
 &  \frac{\partial f}{\partial t}
   +
   \Div
   \left(
   f\vec{u}
   \right)
   =
   0,
   &\quad
   &x\in\Omega,\quad
   t>0, \\
 &  \vec{u}
   =
   -
   \nabla
   \left(
   D(x)\log f
   +
   \phi(x)
   \right),
   &\quad
   &x\in\Omega,\quad
   t>0, \\
 &  f(x,0)=f_0(x),&\quad
   &x\in\Omega,
  \end{aligned}
 \right.
\end{equation}
with periodic boundary conditions. Without loss of generality, we take
$\Omega=[0,1)^n\subset\R^n$. We choose a strictly positive periodic
function $D=D(x)$ with the positive lower bound, $\Cr{const:D_Min}\geq1$,
i.e., $ D(x)\geq \Cr{const:D_Min}, $ for $x\in\Omega$.

Again, the direct formal computations show that the free energy $F$ and the basic energy law \eqref{eq:1.EnergyLaw} take
the following specific forms,
\begin{equation}
 \label{eq:3.FreeEnergy}
  F[f]
  :=
  \int_\Omega
  \left(
   D(x)f(\log f -1)+f\phi(x)
  \right)
  \,dx,
\end{equation}
and
\begin{equation}
 \label{eq:3.EnergyLaw}
  \frac{d}{dt}F[f](t)
  =
  -\int_\Omega
  |\vec{u}|^2f\,dx
  =:
  - D_{\mathrm{dis}}[f](t).
\end{equation}

As discussed in \cite{MR4506846}, the energy law \eqref{eq:3.EnergyLaw}
with the free energy \eqref{eq:3.FreeEnergy} carries all the physics of
the system, and the system, together with the kinematic assumption of
$f$, will yield the equation \eqref{eq:3.FokkerPlanck} by the energetic variational
approach. In particular, we note that inhomogeneity $D(x)$ is the
source of the nonlinearity in \eqref{eq:3.FokkerPlanck}.  Like what we had proved
in the previous section, Theorem \ref{thm:2}, the following theorem
states that for any function $\phi$ with bounded second derivatives, we
can show exponential decay for the dissipation rate in the energy law
provided the diffusion coefficient $D(x)$ is sufficiently
large. However, besides assumptions made in Theorem \ref{thm:2}, we
will make additional assumptions on the gradients of $D(x)$ and
$\phi$, as well as assumption on the number of dimension $n$.

\begin{theorem}
 \label{thm:3}
 Assume $n=1,2,3$. For a fixed constant $\gamma>0$, there exist positive
 constants $\Cr{const:D_Min}\geq1$ and $\Cl{const:3.Initial_Energy}>0$ which
 depend on dimension $n$, the lower bound of Hessian of $\phi$ in
 terms of $\lambda$ defined in
 \eqref{eq:1.Hesse_Potential_Lower_Bounds}, $\gamma$, the initial data
 $f_0$ (in terms of $\Cr{const:InitMin}$, $\Cr{const:InitMax}$ defined
 on \eqref{eq:1.Initial}), the gradient of $D(x)$ (in terms of
 $\Cr{const:grad_D}$ defined in \eqref{eq:1.grad_D}),
 $\|\phi\|_{L^\infty(\Omega)}$ , and
 $\|\nabla\phi\|_{L^\infty(\Omega)}$, such that if \eqref{eq:1.D_Min}
 holds and
 \begin{equation}
  \label{eq:3.Initial_Free_Energy}
   \int_\Omega |\nabla (D(x)\log f_0+\phi(x))|^2f_0\,dx
   \leq
   \Cr{const:3.Initial_Energy},
 \end{equation}
 then,  for $t>0$, we have the following result:
 \begin{equation}
  \label{eq:3.Exponential_Decay_Free_Energy}
  \int_\Omega
   |\vec{u}|^2
   f\,dx
   \leq
   \Cr{const:3.Exponential_Coefficient}
   e^{-\gamma t},
 \end{equation}
 for  a positive constant
 $\Cl{const:3.Exponential_Coefficient}>0$.
\end{theorem}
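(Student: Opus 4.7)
My plan is to follow the same overall strategy as in the proof of Theorem~\ref{thm:2}: differentiate the dissipation rate $\int_\Omega|\vec u|^2 f\,dx$ in time, establish a Poincar\'e-type inequality so that the ``good'' part of the derivative controls $\int_\Omega|\vec u|^2 f\,dx$, and then exploit large $\Cr{const:D_Min}$ to arrive at a differential inequality $\tfrac{d}{dt}\int_\Omega|\vec u|^2 f\,dx\leq -\gamma\int_\Omega|\vec u|^2 f\,dx$, after which Gronwall gives \eqref{eq:3.Exponential_Decay_Free_Energy}. The new effect is that when $D=D(x)$, differentiating $\vec u=-\nabla(D\log f+\phi)$ in time produces, in addition to the terms appearing in the homogeneous case, extra error terms carrying factors of $\nabla D$ (and $\log f$) that must be absorbed.

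First I would verify that the Poincar\'e-type inequality $\int_\Omega|\vec u|^2 f\,dx\leq C\int_\Omega|\nabla\vec u|^2 f\,dx$ still holds with a constant $C$ \emph{independent} of $\Cr{const:D_Min}$. The argument of Lemmas~\ref{lem:2.Sobolev} and~\ref{lem:2.Poincare} transfers essentially verbatim, since $\vec u$ is still a periodic scalar gradient (hence has vanishing mean on $\Omega$), and since the uniform two-sided bound $e^{-\Cr{const:log_f}}\leq f\leq e^{\Cr{const:log_f}}$ from Corollary~\ref{cor:1.bounds_of_log_f} and Remark~\ref{rk1_8} is, crucially, independent of $\Cr{const:D_Min}$.

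Next I would compute $\tfrac{d}{dt}\int_\Omega|\vec u|^2 f\,dx$ by integration by parts using the continuity equation. This reproduces the leading terms $-2\int_\Omega(\nabla^2\phi\,\vec u\cdot\vec u)f\,dx - 2\int_\Omega D|\nabla\vec u|^2 f\,dx$, plus correction terms, schematically of the form $\int_\Omega (\nabla D\cdot\vec u)(\cdots)f\,dx$ with $(\cdots)$ quadratic in $\vec u$ or linear in $\nabla\vec u$ (and possibly carrying a single factor of $\log f$, which is uniformly bounded by $\Cr{const:log_f}$). These corrections are estimated by Cauchy--Schwarz, H\"older, and the Sobolev-type inequality~\eqref{eq:2.SobolevType}; they reduce to products such as $\bigl(\int_\Omega|\vec u|^2 f\,dx\bigr)^{1/2}\bigl(\int_\Omega|\nabla\vec u|^2 f\,dx\bigr)^{1/2}$ with a prefactor involving $\Cr{const:grad_D}$, or to cubic combinations involving $\int_\Omega|\vec u|^{p^*}f\,dx$. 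The dimensional restriction $n\leq 3$ is exactly what is needed for $p^*\geq 6$ in \eqref{eq:2.SobolevType} to absorb the cubic terms into $\int_\Omega|\nabla\vec u|^2 f\,dx$ times a small factor of $\int_\Omega|\vec u|^2 f\,dx$.

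To close the inequality, $\int_\Omega|\vec u|^2 f\,dx$ must remain small, which is why~\eqref{eq:3.Initial_Free_Energy} is a smallness condition rather than mere finiteness as in Theorem~\ref{thm:2}. I would set this up as a continuity/bootstrap: define $T^*$ as the maximal time on which $\int_\Omega|\vec u|^2 f\,dx\leq 2\Cr{const:3.Exponential_Coefficient}e^{-\gamma t}$, use this bound to absorb the $\nabla D$ corrections into $2\int_\Omega D|\nabla\vec u|^2 f\,dx$, invoke the Poincar\'e inequality, and obtain the desired differential inequality on $[0,T^*)$, forcing $T^*=\infty$. The main obstacle is the simultaneous tuning of three thresholds: $\Cr{const:D_Min}$ must be large enough to dominate $\lambda$ (as in Theorem~\ref{thm:2}) \emph{and} to dominate the $\nabla D$--driven corrections, which themselves depend on $\Cr{const:grad_D}$ and $\|\nabla\phi\|_{L^\infty(\Omega)}$; and $\Cr{const:3.Initial_Energy}$ must be chosen small enough that the cubic-in-$\vec u$ error terms stay subdominant for all $t>0$, while still being large enough to be a meaningful initial class.
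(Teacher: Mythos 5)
Your proposal is structurally correct and identifies all the essential ingredients: the $\Cr{const:D_Min}$-uniform two-sided bound $e^{-\Cr{const:log_f}}\leq f\leq e^{\Cr{const:log_f}}$ from Corollary~\ref{cor:1.bounds_of_log_f}, the fact that $\vec u$ is still a scalar potential gradient so the mean-zero Sobolev/Poincar\'e argument (Lemmas~\ref{lem:2.Sobolev}, \ref{lem:2.Poincare}) carries over with a constant now depending on $\Cr{const:grad_D}$ but not $\Cr{const:D_Min}$, the appearance of $\nabla D$- and $\log f$-weighted correction terms in $\tfrac{d^2F}{dt^2}$, the role of the dimension restriction $n\le 3$ in closing the cubic term via the Sobolev exponent, and the triple tuning of $\Cr{const:D_Min}$ against $\lambda$, $\Cr{const:grad_D}$, and $\|\nabla\phi\|_{L^\infty}$.

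The one place you diverge from the paper is in the final closure. You propose a continuity/bootstrap: define $T^*$ to be the maximal time on which $\int_\Omega|\vec u|^2 f\,dx\le 2\Cr{const:3.Exponential_Coefficient}e^{-\gamma t}$, use that bound to keep the cubic terms subdominant, and force $T^*=\infty$. The paper instead establishes the one-shot differential inequality
\begin{equation*}
\frac{d}{dt}\Bigl(\int_\Omega|\vec u|^2 f\,dx\Bigr)\;\le\;-\gamma\int_\Omega|\vec u|^2 f\,dx\;+\;\frac{1}{6}\Bigl(\int_\Omega|\vec u|^2 f\,dx\Bigr)^{3}
\end{equation*}
(Proposition~\ref{prop:3.Second_Derivative_Free_Energy} and Lemma~\ref{lem:3.Differential_Inequality_Dissipation}) and then appeals to the nonlinear Gronwall estimate of Lemma~\ref{lem:3.Gronwall}: if $g'\le -cg+dg^p$ and $g(0)<(c/d)^{1/(p-1)}$ then $g(t)\le(g(0)^{1-p}-d/c)^{-1/(p-1)}e^{-ct}$. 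This yields the exponential decay and the explicit threshold $\Cr{const:3.Initial_Energy}=(6\gamma)^{1/2}$ in one stroke, with no continuation argument. Both approaches work and are morally equivalent (your $T^*$ is in effect the blow-up horizon of the nonlinear ODE comparison); the nonlinear Gronwall route is simply cleaner and gives the decay constant in closed form, while your bootstrap is slightly more robust in situations where the comparison ODE cannot be solved explicitly. If you pursue the bootstrap version, you should be careful that the smallness threshold you impose at $t=0$ is strict enough that the ``absorbed'' cubic contribution leaves a margin for the exponential decay, i.e.\ the estimate you prove on $[0,T^*)$ must be strictly stronger than the hypothesis defining $T^*$.
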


As in section \ref{sec:2}, we have the following Sobolev-type inequality
for the velocity $\vec{u}$. Although the proof is the same as in Lemma
\ref{lem:2.Sobolev}, due to the fact that $\vec{u}$ is a potential
gradient in this case, the constant $\Cr{const:3.Sobolev}$, which is in
Lemma \ref{lem:3.Sobolev}, will depend on $\Cr{const:grad_D}$ defined in
\eqref{eq:1.grad_D}.

\begin{lemma}
 \label{lem:3.Sobolev} 
 Let $f$ be a solution of \eqref{eq:3.FokkerPlanck}, and let $\vec{u}$
 be given as in \eqref{eq:3.FokkerPlanck}.  There exists
 $\Cr{const:3.Sobolev}>0$ which depends only on $n$, $f_0$ (in terms of
 $\Cr{const:InitMin}$, $\Cr{const:InitMax}$ defined in
 \eqref{eq:1.Initial}), the gradient of $D(x)$ (in terms of
 $\Cr{const:grad_D}$ defined in \eqref{eq:1.grad_D}), and
 $\|\phi\|_{L^\infty(\Omega)}$ such that
 \begin{equation}
  \label{eq:3.SobolevType}
  \left(
   \int_\Omega
   |\vec{u}|^{p^*}f\,dx
  \right)^{\frac{1}{p^*}}
  \leq
  \Cl{const:3.Sobolev}
  \left(
   \int_\Omega
   |\nabla\vec{u}|^{2}f\,dx
  \right)^{\frac{1}{2}},
 \end{equation}
 where $2< p^*<\infty$ for $n=1,2$ and
 $\frac{1}{p^*}=\frac{1}{2}-\frac{1}{n}$ for $n\geq3$.
\end{lemma}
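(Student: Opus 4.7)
The plan is to mirror the proof of Lemma~\ref{lem:2.Sobolev} closely, since the key structural feature---that $\vec{u}$ is the gradient of a periodic function on $\Omega$---survives in the inhomogeneous diffusion setting. The only genuinely new ingredient is bookkeeping: the uniform bounds on $f$ provided by Corollary~\ref{cor:1.bounds_of_log_f} now involve dependence on $\Cr{const:grad_D}$ through the constant $\Cr{const:log_f}$ in \eqref{eq:1.Cor.2}, whereas in the homogeneous case $\nabla D$ vanished and this dependence was absent.

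First, I would apply the standard Sobolev--Poincar\'e inequality on $\Omega$ to $\vec{u}-\overline{\vec{u}}$, where $\overline{\vec{u}}$ is the integral mean of $\vec{u}$. Because $\vec{u}=-\nabla(D(x)\log f+\phi(x))$ is the gradient of a periodic function, integration by parts together with the periodic boundary condition gives
\begin{equation*}
\int_\Omega \vec{u}\,dx = -\int_{\partial\Omega}\bigl(D(x)\log f+\phi(x)\bigr)\nu\,d\sigma = \vec{0},
\end{equation*}
so $\overline{\vec{u}}=\vec{0}$ and the Sobolev--Poincar\'e inequality yields
\begin{equation*}
\left(\int_\Omega |\vec{u}|^{p^*}\,dx\right)^{1/p^*} \le \Cr{const:Sobolev}\left(\int_\Omega |\nabla\vec{u}|^{2}\,dx\right)^{1/2},
\end{equation*}
for the exponents $p^*$ indicated in the statement (identical to those in Lemma~\ref{lem:2.Sobolev}).

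Next, I would pass between the weighted and unweighted integrals by invoking the pointwise bounds $e^{-\Cr{const:log_f}}\le f(x,t)\le e^{\Cr{const:log_f}}$ from Corollary~\ref{cor:1.bounds_of_log_f} and Remark~\ref{rk1_8}. Inserting the upper bound on $f$ to inflate the left-hand side, using the lower bound on $f$ to dominate $\int_\Omega|\nabla\vec{u}|^2\,dx$ by the weighted integral $\int_\Omega|\nabla\vec{u}|^2 f\,dx$, and setting $\Cr{const:3.Sobolev}:=\Cr{const:Sobolev}\,e^{\Cr{const:log_f}(1/p^*+1/2)}$ then delivers \eqref{eq:3.SobolevType}.

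The only point requiring care is the dependence of $\Cr{const:3.Sobolev}$. Since $\nabla D$ no longer vanishes, the constant $\Cr{const:log_f}$ supplied by Corollary~\ref{cor:1.bounds_of_log_f} depends on $n$, $\Cr{const:InitMin}$, $\Cr{const:InitMax}$, $\|\phi\|_{L^\infty(\Omega)}$, \emph{and} $\Cr{const:grad_D}$; consequently $\Cr{const:3.Sobolev}$ inherits exactly the dependencies listed in the statement. I do not foresee any substantive obstacle: this lemma is essentially a transport of the Sobolev embedding into the $f$-weighted setting, with the real work done upstream by the maximum principle of Proposition~\ref{prop:1.MaximumPrinciple} and the resulting uniform two-sided bounds on $f$.
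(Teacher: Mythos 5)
Your proof is correct and is exactly what the paper intends: the authors do not spell out a separate argument for Lemma~\ref{lem:3.Sobolev} but simply remark that "the proof is the same as in Lemma~\ref{lem:2.Sobolev}," with the constant now depending on $\Cr{const:grad_D}$ through $\Cr{const:log_f}$. You have reproduced that argument faithfully, including the key observation that $\vec{u}$ remains a potential gradient (so $\overline{\vec{u}}=\vec 0$) and the correct tracing of the new dependence on $\Cr{const:grad_D}$.
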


With the extra dependence of the constants, the same proof in Lemma \ref{lem:2.Poincare}
will yield the following Poincare-type inequality for the velocity $\vec{u}$.

\begin{lemma}
 \label{lem:3.Poincare} 
 Let $f$ be a solution of \eqref{eq:3.FokkerPlanck}, and let $\vec{u}$
 be given as in \eqref{eq:3.FokkerPlanck}. There exists a constant
 $\Cl{const:3.Poincare}>0$ which depends only on $n$, $f_0$ (in terms of
 $\Cr{const:InitMin}$, $\Cr{const:InitMax}$ defined in
 \eqref{eq:1.Initial}), the gradient of $D(x)$ (in terms of
 $\Cr{const:grad_D}$ defined in \eqref{eq:1.grad_D}), and
 $\|\phi\|_{L^\infty(\Omega)}$ such that
 \begin{equation}
  \label{eq:3.PoincareType}
  \int_\Omega
   |\vec{u}|^2f\,dx
   \leq
   \Cr{const:3.Poincare}
   \int_\Omega
   |\nabla\vec{u}|^2f\,dx.
 \end{equation}
\end{lemma}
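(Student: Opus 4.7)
The plan is to transcribe the argument from Lemma~\ref{lem:2.Poincare} to the inhomogeneous setting, invoking the Sobolev-type bound of Lemma~\ref{lem:3.Sobolev} in place of its constant-$D$ counterpart. The only genuine work is bookkeeping of constants, since the kinematic structure $\vec{u}=-\nabla(D(x)\log f+\phi(x))$ is still a periodic gradient and therefore has zero spatial mean, so the Sobolev--Poincar\'e step employed inside Lemma~\ref{lem:3.Sobolev} continues to apply without modification.

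First I would treat the subcritical case $n\geq3$. Fix $p^*$ by $\frac{1}{p^*}=\frac{1}{2}-\frac{1}{n}$, which is exactly the exponent provided by Lemma~\ref{lem:3.Sobolev}. Split the weight as $f=f^{2/p^*}\cdot f^{1-2/p^*}$ and apply H\"older's inequality with conjugate exponents $p^*/2$ and $p^*/(p^*-2)$ to obtain
\begin{equation}
\int_\Omega |\vec{u}|^2 f\,dx
\leq
\left(\int_\Omega |\vec{u}|^{p^*}f\,dx\right)^{2/p^*}
\left(\int_\Omega f\,dx\right)^{1-2/p^*}.
\end{equation}
By the mass-conservation identity \eqref{eq:1.Mass_Conservation} the second factor equals $1$, so the estimate reduces to pure control by the weighted $L^{p^*}$ norm of $\vec{u}$. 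Squaring \eqref{eq:3.SobolevType} and chaining then yields
\begin{equation}
\int_\Omega |\vec{u}|^2 f\,dx \leq \Cr{const:3.Sobolev}^{2}\int_\Omega |\nabla\vec{u}|^2 f\,dx,
\end{equation}
which is \eqref{eq:3.PoincareType} with $\Cr{const:3.Poincare}:=\Cr{const:3.Sobolev}^{2}$. For $n=1,2$, Lemma~\ref{lem:3.Sobolev} delivers the same estimate for every $p^*\in(2,\infty)$, so the identical chain closes after fixing any convenient value such as $p^*=6$.

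I do not anticipate a substantive obstacle, since no step uses the constancy of $D$ beyond the uniform two-sided bound on $f$ that is already guaranteed by Proposition~\ref{prop:1.MaximumPrinciple} and Corollary~\ref{cor:1.bounds_of_log_f}. The item worth verifying carefully is the dependency list: because the log-bound $\Cr{const:log_f}$ now carries a contribution from $\|\nabla D\|_{L^\infty(\Omega)}$ through \eqref{eq:1.D_Max}, the constant $\Cr{const:3.Sobolev}$ in Lemma~\ref{lem:3.Sobolev} inherits the dependence on $\Cr{const:grad_D}$, and hence so does $\Cr{const:3.Poincare}$. The structural point, however, is that $\Cr{const:3.Poincare}$ stays independent of the pointwise lower bound $\Cr{const:D_Min}$ on $D$; this uniformity is precisely what will be needed when $\Cr{const:D_Min}$ is chosen large in Theorem~\ref{thm:3}.
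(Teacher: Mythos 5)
Your argument reproduces exactly the paper's intended proof: the paper itself states that Lemma~\ref{lem:3.Poincare} follows from ``the same proof in Lemma~\ref{lem:2.Poincare}'' once Lemma~\ref{lem:3.Sobolev} is substituted for Lemma~\ref{lem:2.Sobolev}, which is precisely the H\"older--mass conservation--Sobolev chain you carry out, including the choice $\Cr{const:3.Poincare}=\Cr{const:3.Sobolev}^2$ and the $p^*=6$ workaround for $n=1,2$. Your closing remarks on the constant inheriting dependence on $\Cr{const:grad_D}$ while remaining uniform in $\Cr{const:D_Min}$ also match the paper's own remark following the lemma.
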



\begin{remark}
 Compare to Lemma \ref{lem:2.Poincare}, the constants
 $\Cr{const:3.Sobolev}$ and $\Cr{const:3.Poincare}$ depend not
 only on $n$, $\Cr{const:InitMin}$, $\Cr{const:InitMax}$, and
 $\|\phi\|_{L^\infty(\Omega)}$, but also $\Cr{const:grad_D}$, the
 upper bound of the gradient of $D(x)$. We emphasize
 again that the constants above $\Cr{const:3.Sobolev}$ and
 $\Cr{const:3.Poincare}$ are uniform with respect to
 $\Cr{const:D_Min}$.

\end{remark}

In the next lemma, we obtain the following interpolation inequality.

\begin{lemma}
 \label{lem:3.Sobolev.cubic}
 Let $n=1,2,3$. Let $f$ be a solution of \eqref{eq:3.FokkerPlanck}, and let
 $\vec{u}$ be given as in \eqref{eq:3.FokkerPlanck}. 
 Then we have
 \begin{equation}
  \label{eq:3.Sobolev.cubic}
   \int_\Omega|\vec{u}|^3f\,dx
   \leq
   \frac{3}{4}
   \Cr{const:3.Sobolev}^{\frac{3}{2}}
   \int_\Omega|\nabla \vec{u}|^2f\,dx
   +
   \frac{1}{4}
   \Cr{const:3.Sobolev}^{\frac{3}{2}}
   \left(
   \int_\Omega|\vec{u}|^2f\,dx
   \right)^3.
 \end{equation}
\end{lemma}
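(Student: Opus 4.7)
The plan is to interpolate the weighted $L^{3}$ norm of $\vec{u}$ between the weighted $L^{2}$ and $L^{6}$ norms, bound the $L^{6}$ piece by the Sobolev-type inequality of Lemma~\ref{lem:3.Sobolev}, and then absorb the resulting product via Young's inequality with exponents chosen so that the right-hand side has a first power of $\int|\nabla\vec{u}|^{2}f\,dx$ and a cube of $\int|\vec{u}|^{2}f\,dx$.

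First I would invoke Lemma~\ref{lem:3.Sobolev} with the specific choice $p^{*}=6$. This exponent is admissible precisely because $n\in\{1,2,3\}$: for $n=3$ we have $1/p^{*}=1/2-1/n=1/6$, while for $n=1,2$ any $2<p^{*}<\infty$ is allowed. This yields
\begin{equation*}
\left(\int_\Omega|\vec{u}|^{6}f\,dx\right)^{1/6}\leq \Cr{const:3.Sobolev}\left(\int_\Omega|\nabla\vec{u}|^{2}f\,dx\right)^{1/2}.
\end{equation*}

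Next I would interpolate the $L^{3}$ norm. Since $1/3=\tfrac{1}{2}\cdot\tfrac{1}{2}+\tfrac{1}{2}\cdot\tfrac{1}{6}$, H\"older's inequality applied to the probability measure $f\,dx$ (using $\int_\Omega f\,dx=1$ from \eqref{eq:1.Mass_Conservation}) with the factorization $|\vec{u}|^{3}=|\vec{u}|^{3/2}\cdot|\vec{u}|^{3/2}$ and conjugate exponents $(4/3,4)$ gives
\begin{equation*}
\int_\Omega|\vec{u}|^{3}f\,dx \leq \left(\int_\Omega|\vec{u}|^{2}f\,dx\right)^{3/4}\left(\int_\Omega|\vec{u}|^{6}f\,dx\right)^{1/4}.
\end{equation*}
Substituting the Sobolev bound raised to the $3/2$ power produces
\begin{equation*}
\int_\Omega|\vec{u}|^{3}f\,dx \leq \Cr{const:3.Sobolev}^{3/2}\left(\int_\Omega|\vec{u}|^{2}f\,dx\right)^{3/4}\left(\int_\Omega|\nabla\vec{u}|^{2}f\,dx\right)^{3/4}.
\end{equation*}

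Finally I would close with Young's inequality $ab\leq \tfrac{1}{4}a^{4}+\tfrac{3}{4}b^{4/3}$ applied to $a=\left(\int_\Omega|\vec{u}|^{2}f\,dx\right)^{3/4}$ and $b=\left(\int_\Omega|\nabla\vec{u}|^{2}f\,dx\right)^{3/4}$; this delivers the coefficients $\tfrac{3}{4}$ and $\tfrac{1}{4}$ in the claimed estimate \eqref{eq:3.Sobolev.cubic}. No step is genuinely difficult: the only point requiring care is to work the exponents backward, starting from the shape of the target (a first power of $\int|\nabla\vec{u}|^{2}f\,dx$ and a third power of $\int|\vec{u}|^{2}f\,dx$), which forces the Young pair $(4,4/3)$ and hence the interpolation parameter $\theta=1/2$ used above, which in turn selects $p^{*}=6$ and explains why the hypothesis $n\leq 3$ is needed.
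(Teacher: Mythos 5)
Your proof is correct and follows essentially the same route as the paper's. The paper refers to the proof of Lemma~\ref{lem:4.Sobolev.cubic}, which sets up the H\"older interpolation with general parameters $\alpha+\beta=1$, $3\alpha q=p^{*}$, $3\beta q'=2$ and then solves to find $\alpha=\beta=\tfrac12$, $q=4$, $p^{*}=6$; you arrive at the same exponent choices directly by inspection and then close with the identical application of Young's inequality with the pair $(4,4/3)$, producing the $\tfrac34$ and $\tfrac14$ coefficients. The one calculational detail worth making explicit is that raising the Sobolev inequality to the $3/2$ power is what converts the $(\int|\vec{u}|^{6}f)^{1/4}$ factor into $\Cr{const:3.Sobolev}^{3/2}(\int|\nabla\vec{u}|^{2}f)^{3/4}$, which you do correctly.
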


\begin{proof}
The proof follows from the same as those in \cite[Lemma 3.14]{MR4506846}
(See also Lemma \ref{lem:4.Sobolev.cubic}) together with Lemma
\ref{lem:3.Sobolev}.
\end{proof}

We next recall the second derivative of the free energy \cite{MR4506846}, which can be obtained by 
direct computation from the original system \eqref{eq:3.FokkerPlanck}:

\begin{proposition}%
 [{\cite[Proposition 3.13]{MR4506846}}]
 Let $f$ be a solution of \eqref{eq:3.FokkerPlanck} and let $\vec{u}$ be given as in
 \eqref{eq:3.FokkerPlanck}. Then the following is true:
 \begin{equation}
  \label{eq:3.Second_Derivative_Free_Energy}
   \begin{split}
    \frac{d^2}{dt^2}F[f](t)
    &=
    2\int_\Omega ((\nabla^2\phi(x)) \vec{u}\cdot\vec{u}) f\,dx
    +
    2\int_\Omega D(x)|\nabla \vec{u}|^2 f\,dx
    \\
    &\qquad
    -
    \int_\Omega
    (\log f-1)
    \nabla |\vec{u}|^2\cdot\nabla D(x) f
    \,dx
    \\
    &\qquad
    -
    2
    \int_\Omega (1+\log f)\vec{u}\cdot\nabla D(x)\Div\vec{u}f\,dx
    \\
    &\qquad
    +
    2
    \int_\Omega
    \frac{1}{D(x)}|\vec{u}|^2
    \log f\left(\vec{u}\cdot\nabla D(x)\right) f\,dx
    \\
    &\qquad
    +
    2
    \int_\Omega
    \frac{1}{D(x)}
    (\log f)^2\left(\vec{u}\cdot\nabla D(x)\right)^2 f\,dx
    \\
    &\qquad
    +
    2
    \int_\Omega
    \frac{1}{D(x)}
    \log f\left(\vec{u}\cdot\nabla D(x)\right)\left(\vec{u}\cdot\nabla \phi(x)\right) f\,dx.
   \end{split}
 \end{equation}
\end{proposition}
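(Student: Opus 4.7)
The plan is to compute $\frac{d^2}{dt^2}F[f](t)$ by differentiating the energy law $\frac{d}{dt}F[f] = -\int_\Omega|\vec{u}|^2 f\,dx$ once more in time, using the continuity equation and integration by parts (periodic boundary conditions) to move derivatives off $\vec{u}_t$, and then systematically invoking the key algebraic identity
\begin{equation}
D(x)\nabla\log f = -\vec{u} - (\log f)\nabla D - \nabla\phi,
\end{equation}
obtained by rearranging $\vec{u} = -\nabla(D\log f + \phi)$, whenever $\nabla\log f$ or $\nabla\phi$ appears in an integrand.

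First I would write
\begin{equation}
\frac{d^2}{dt^2}F[f](t) = -2\int_\Omega f\vec{u}\cdot\vec{u}_t\,dx - \int_\Omega |\vec{u}|^2 f_t\,dx,
\end{equation}
substitute $f_t = -\Div(f\vec{u})$ in the second integral, and integrate by parts to recast it as $\int_\Omega \nabla|\vec{u}|^2\cdot(f\vec{u})\,dx$. For $\vec{u}_t$, since $D$ and $\phi$ are time-independent, $\vec{u}_t = -\nabla\bigl(D(\log f)_t\bigr)$; inserting $(\log f)_t = f_t/f = -\vec{u}\cdot\nabla\log f - \Div\vec{u}$ and eliminating $D\nabla\log f$ via the key identity yields
\begin{equation}
\vec{u}_t = \nabla\bigl[-|\vec{u}|^2 - (\log f)(\vec{u}\cdot\nabla D) - \vec{u}\cdot\nabla\phi + D\Div\vec{u}\bigr].
\end{equation}
Plugging this into $-2\int f\vec{u}\cdot\vec{u}_t\,dx$ and integrating by parts produces four integrals against $\Div(f\vec{u})$, one per bracketed term.

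I would then expand each of the four pieces using: (i) the symmetry $\nabla\vec{u} = (\nabla\vec{u})^T$, valid because $\vec{u}$ is a potential gradient, which gives $\vec{u}\cdot(\nabla\vec{u})\vec{w} = \tfrac{1}{2}\vec{w}\cdot\nabla|\vec{u}|^2$ for every vector $\vec{w}$ and hence $\vec{u}\cdot\nabla(\vec{u}\cdot\nabla\phi) = \nabla^2\phi\,\vec{u}\cdot\vec{u} + \tfrac{1}{2}\nabla\phi\cdot\nabla|\vec{u}|^2$; (ii) the scalar identity
\begin{equation}
\int_\Omega g|\nabla\vec{u}|^2\,dx = \int_\Omega g(\Div\vec{u})^2\,dx + \int_\Omega(\vec{u}\cdot\nabla g)\Div\vec{u}\,dx - \tfrac{1}{2}\int_\Omega\nabla g\cdot\nabla|\vec{u}|^2\,dx,
\end{equation}
derived from $\Div(\nabla\vec{u}\,\vec{u}) = \vec{u}\cdot\nabla\Div\vec{u} + |\nabla\vec{u}|^2$ by integration by parts, applied with $g = D(x)f$; and (iii) the key identity one more time to rewrite every occurrence of $\vec{u}\cdot\nabla f = f\,\vec{u}\cdot\nabla\log f$ as $\frac{f}{D}[-|\vec{u}|^2 - (\log f)(\vec{u}\cdot\nabla D) - \vec{u}\cdot\nabla\phi]$. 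The desired Hessian term $2\int f(\nabla^2\phi\,\vec{u}\cdot\vec{u})\,dx$ comes out of the $-\vec{u}\cdot\nabla\phi$ piece via (i), while $2\int D|\nabla\vec{u}|^2 f\,dx$ emerges from the $D\Div\vec{u}$ piece via (ii); the leftover $\nabla\phi\cdot\nabla|\vec{u}|^2$ cross term is absorbed by invoking the key identity to trade $\nabla\phi$ for $-\vec{u} - D\nabla\log f - (\log f)\nabla D$.

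Finally I would collect all contributions. The pairing $\nabla(Df) = D\nabla f + f\nabla D$ produces two families of remainders: a $D\nabla f$ family (which, via (iii), generates precisely the three $\frac{1}{D}$-weighted triple products with $\log f$-factors in the statement) and an $f\nabla D$ family (which combines with the outputs of the $-(\log f)(\vec{u}\cdot\nabla D)$ expansion to form exactly the coefficients $-(\log f - 1)$ on $\nabla|\vec{u}|^2\cdot\nabla D$ and $-2(1+\log f)$ on $\vec{u}\cdot\nabla D\Div\vec{u}$). The main obstacle is purely combinatorial: every gradient produces multiple similar-looking terms, and a half-dozen of them must be combined without dropping factors of $\log f$ or sign errors. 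The guiding principle that keeps the accounting tractable is to immediately convert any $\nabla\log f$, $\nabla f$, or $\nabla\phi$ back into the basic vectors $\vec{u}$ and $\nabla D$ via the key identity; this is precisely why the final three terms carry the factor $1/D$.
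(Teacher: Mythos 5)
The paper itself does not reprove this identity; it cites \cite[Proposition 3.13]{MR4506846}, so there is no in-text proof to compare against. Your plan, however, is the natural computation that would establish it: differentiate the energy law \eqref{eq:3.EnergyLaw} once more in time, use $f_t=-\Div(f\vec{u})$ and $\vec{u}_t=\nabla\bigl[D\Div\vec{u}-|\vec{u}|^2-(\log f)(\vec{u}\cdot\nabla D)-\vec{u}\cdot\nabla\phi\bigr]$, integrate by parts against periodic boundary conditions, and systematically eliminate $\nabla\log f$ via the rearranged definition of $\vec{u}$. The auxiliary identities you invoke are correct and appropriate: symmetry of $\nabla\vec{u}$ (because $\vec{u}$ is a gradient in this section) gives $\vec{u}\cdot\nabla(\vec{u}\cdot\nabla\phi)=\nabla^2\phi\,\vec{u}\cdot\vec{u}+\tfrac12\nabla\phi\cdot\nabla|\vec{u}|^2$, and the weighted identity $\int g|\nabla\vec{u}|^2=\int g(\Div\vec{u})^2+\int(\vec{u}\cdot\nabla g)\Div\vec{u}-\tfrac12\int\nabla g\cdot\nabla|\vec{u}|^2$ follows from $\Div(\nabla\vec{u}\,\vec{u})=|\nabla\vec{u}|^2+\vec{u}\cdot\nabla\Div\vec{u}$ exactly as you state, again using the symmetry. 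These are precisely the tools that produce the Hessian term, the $D|\nabla\vec{u}|^2$ term, and the $1/D$-weighted corrections.

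One concrete slip: after substituting $f_t=-\Div(f\vec{u})$ into the term $-\int_\Omega|\vec{u}|^2 f_t\,dx$, you get $\int_\Omega|\vec{u}|^2\Div(f\vec{u})\,dx$, and integrating by parts gives $-\int_\Omega\nabla|\vec{u}|^2\cdot(f\vec{u})\,dx$, not $+\int_\Omega\nabla|\vec{u}|^2\cdot(f\vec{u})\,dx$ as written. Since this is precisely the kind of sign that, propagated through the half-dozen recombinations, would corrupt the coefficients $-(\log f-1)$ and $-2(1+\log f)$ in the target formula, it is worth correcting before attempting the bookkeeping. (Equivalently, it is cleaner not to integrate this piece by parts at all: keep it as $\int|\vec{u}|^2\Div(f\vec{u})\,dx$ and combine it directly with the $2\int\psi\Div(f\vec{u})\,dx$ term from the $\vec{u}_t$ contribution, since both carry the common factor $\Div(f\vec{u})$.) With that caveat the strategy should reproduce \eqref{eq:3.Second_Derivative_Free_Energy}.
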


Below out of total 7 terms on the right-hand side of
\eqref{eq:3.Second_Derivative_Free_Energy}, we first consider the 3rd,
4th, and 7th integral of the right-hand side of
\eqref{eq:3.Second_Derivative_Free_Energy}.  Notice the 2nd and 6th
terms are with the right positive sign, while the 5th term includes the
cubic order of $\vec{u}$.

\begin{lemma}
 Let $f$ be a solution of \eqref{eq:3.FokkerPlanck}, and let $\vec{u}$ be given as
 in \eqref{eq:3.FokkerPlanck}. Then, 
 we have
 \begin{equation}
  \label{eq:3.Second_Derivative_Free_Energy.1}
  \begin{split}
   \frac{d^2}{dt^2}F[f](t)
   &\geq
   2\int_\Omega
   \biggl(
   (\nabla^2\phi(x)\vec{u}\cdot\vec{u})
   \\
   &\qquad\qquad
   -
   \Bigl(
   \frac{1}{2}
   +
   \frac{\Cr{const:log_f}\Cr{const:grad_D}}{\Cr{const:D_Min}}
   \|\nabla \phi\|_{L^\infty(\Omega)}
   \Bigr) |\vec{u}|^2
   \biggr) f\,dx
   \\
   &\quad
   +
   2\int_\Omega
   \biggl(
   1
   -
   \frac{(1+n)\Cr{const:grad_D}^2(\Cr{const:log_f}+1)^2}{\Cr{const:D_Min}}
   \biggr)
   D(x)|\nabla \vec{u}|^2 f\,dx
   \\
   &\quad
   +
   2
   \int_\Omega
   \frac{1}{D(x)}|\vec{u}|^2
   \log f\left(\vec{u}\cdot\nabla D(x)\right) f\,dx.
  \end{split}
 \end{equation}
\end{lemma}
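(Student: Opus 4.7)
The plan is to work term by term from the seven-term identity \eqref{eq:3.Second_Derivative_Free_Energy}. The first integral ($\nabla^2\phi\vec{u}\cdot\vec{u}$) and the fifth (cubic in $\vec{u}$) appear unchanged in the conclusion, so I leave them untouched. The second integral $2\int D(x)|\nabla\vec{u}|^2 f\,dx$ is retained but will have some of its ``budget'' spent absorbing gradient pieces from the bad terms. The sixth integral, being a genuine nonnegative square, can be dropped outright since it is absent from the statement. All the actual work is in estimating the third, fourth, and seventh integrals from below by combinations of $\int |\vec{u}|^2 f\,dx$ and $\int D(x)|\nabla\vec{u}|^2 f\,dx$.

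The seventh integral is the easiest: a pointwise Cauchy--Schwarz gives $|(\vec{u}\cdot\nabla D(x))(\vec{u}\cdot\nabla\phi(x))|\leq \Cr{const:grad_D}\|\nabla\phi\|_{L^\infty(\Omega)}|\vec{u}|^2$, and combining this with $|\log f|\leq \Cr{const:log_f}$ from Corollary \ref{cor:1.bounds_of_log_f} and $1/D(x)\leq 1/\Cr{const:D_Min}$ from \eqref{eq:1.D_Min} yields exactly the $\tfrac{2\Cr{const:log_f}\Cr{const:grad_D}\|\nabla\phi\|_{L^\infty(\Omega)}}{\Cr{const:D_Min}}\int_\Omega|\vec{u}|^2 f\,dx$ deficit that appears inside the first line of the conclusion.

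For the third and fourth integrals I use Young's inequality $2|ab|\leq \eta^{-1}a^2 + \eta b^2$ with a coordinated choice of $\eta$. For the third, after the Cauchy--Schwarz bound $|\nabla|\vec{u}|^2\cdot\nabla D(x)|\leq 2|\vec{u}||\nabla\vec{u}||\nabla D(x)|$, together with $|\log f - 1|\leq \Cr{const:log_f}+1$ and $|\nabla D|\leq \Cr{const:grad_D}$, Young's with $\eta = 2(\Cr{const:log_f}+1)\Cr{const:grad_D}$ produces a $\tfrac{1}{2}|\vec{u}|^2$ piece and a $2(\Cr{const:log_f}+1)^2\Cr{const:grad_D}^2|\nabla\vec{u}|^2$ piece. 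For the fourth, I pair $(1+\log f)(\vec{u}\cdot\nabla D)$ against $\Div\vec{u}$, invoke $|\Div\vec{u}|^2\leq n|\nabla\vec{u}|^2$ from \eqref{eq:1.div-grad}, and choose $\eta = 2(\Cr{const:log_f}+1)^2\Cr{const:grad_D}^2$, producing another $\tfrac{1}{2}|\vec{u}|^2$ piece and a $2n(\Cr{const:log_f}+1)^2\Cr{const:grad_D}^2|\nabla\vec{u}|^2$ piece. Converting $|\nabla\vec{u}|^2$ into $D(x)|\nabla\vec{u}|^2/\Cr{const:D_Min}$ via \eqref{eq:1.D_Min} and adding the two contributions, the $|\vec{u}|^2$ coefficients sum to $\tfrac{1}{2}+\tfrac{1}{2}=1$ and the $D(x)|\nabla\vec{u}|^2$ coefficients combine into the single factor $(1+n)\Cr{const:grad_D}^2(\Cr{const:log_f}+1)^2/\Cr{const:D_Min}$, exactly as in \eqref{eq:3.Second_Derivative_Free_Energy.1}.

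The one subtle point, which I would classify as careful bookkeeping rather than a genuine obstacle, is the coordinated choice of Young's parameters across the third and fourth integrals so that the $|\vec{u}|^2$ contributions balance symmetrically at $\tfrac{1}{2}$ each while the $D(x)|\nabla\vec{u}|^2$ contributions produce the asymmetric $1{+}n$ split dictated by the two different pointwise bounds ($|\nabla|\vec{u}|^2|\leq 2|\vec{u}||\nabla\vec{u}|$ in one case, $|\Div\vec{u}|^2\leq n|\nabla\vec{u}|^2$ in the other). Once these two $\eta$'s are fixed, the rest is a direct summation: collect the good first and second integrals, add the cubic fifth integral verbatim, discard the nonnegative sixth, and subtract the absolute values of the third, fourth, and seventh integrals with the coefficients computed above.
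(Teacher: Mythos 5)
Your proposal matches the paper's proof essentially step for step: the nonnegative sixth integral is discarded, the third, fourth, and seventh integrals are estimated pointwise via Cauchy--Schwarz and Young's inequality using $|\nabla|\vec{u}|^2|\le 2|\vec{u}||\nabla\vec{u}|$, $|\Div\vec{u}|^2\le n|\nabla\vec{u}|^2$, and $1\le D(x)/\Cr{const:D_Min}$, and the resulting $\tfrac12+\tfrac12$ split in the $|\vec{u}|^2$ deficit and the $(1+n)$ factor in the $D(x)|\nabla\vec{u}|^2$ deficit come out exactly as in \eqref{eq:3.Second_Derivative_Free_Energy.1}. The argument is correct and takes the same approach as the paper.
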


\begin{proof}
 Note that the integrand of the 6th term of
 \eqref{eq:3.Second_Derivative_Free_Energy} is non-negative, we need to
 estimate the integrands of the 3rd, 4th, and 7th integrals of
 \eqref{eq:3.Second_Derivative_Free_Energy}.
 We have by Cauchy's inequality with  a fixed $\varepsilon=1/4$
 (See \cite[p.662]{MR1625845}) that,
 \begin{equation}
  \begin{split}
   |
   (\log f-1)\nabla |\vec{u}|^2\cdot\nabla D(x) f
   |
   &\leq
   2
   (|\log f|+1)^2
   |\nabla \vec{u}|^2
   |\nabla D(x)|^2
   f
   +
   \frac{1}{2}
   |\vec{u}|^2 f
   \\
   &\leq
   \frac{2\Cr{const:grad_D}^2(\Cr{const:log_f}+1)^2}{\Cr{const:D_Min}}
   D(x)
   |\nabla \vec{u}|^2
   f
   +
   \frac{1}{2}
   |\vec{u}|^2 f,
  \end{split}
 \end{equation}
 \begin{equation}
  \begin{split}
   |2(1+\log f)\vec{u}\cdot\nabla D(x)\Div\vec{u}f|
   &\leq
   2n
   (|\log f|+1)^2
   |\nabla \vec{u}|^2
   |\nabla D(x)|^2 f
   +
   \frac{1}{2}
   |\vec{u}|^2 f
   \\
   &\leq
   \frac{2n\Cr{const:grad_D}^2(\Cr{const:log_f}+1)^2}{\Cr{const:D_Min}}
   D(x)
   |\nabla \vec{u}|^2
   f
   +
   \frac{1}{2}
   |\vec{u}|^2 f,
  \end{split}
 \end{equation}
 and
 \begin{equation}
  \left|
   \frac{2}{D(x)}
   \log f\left(\vec{u}\cdot\nabla D(x)\right)\left(\vec{u}\cdot\nabla \phi(x)\right) f
  \right|
  \leq
  \frac{2\Cr{const:log_f}\Cr{const:grad_D}}{\Cr{const:D_Min}}
  \|\nabla \phi\|_{L^\infty(\Omega)}
  |\vec{u}|^2f.
 \end{equation}
 Here we used \eqref{eq:1.D_Min}, \eqref{eq:1.grad_D},
 \eqref{eq:1.bounds_of_log_f}, and \eqref{eq:1.div-grad}.  Thus, using
 all inequalities above in \eqref{eq:3.Second_Derivative_Free_Energy},
 we arrive at the estimate \eqref{eq:3.Second_Derivative_Free_Energy.1}.
\end{proof}


Next, we compute the term with cubic order of $\vec{u}$ in \eqref{eq:3.Second_Derivative_Free_Energy.1}.

\begin{lemma}
 Let $f$ be a solution of \eqref{eq:3.FokkerPlanck}, and let $\vec{u}$ be given as
 in \eqref{eq:3.FokkerPlanck}. Assume \eqref{eq:1.D_Min}.
 Then, we have
 \begin{equation}
  \label{eq:3.Second_Derivative_Free_Energy.2}
   \begin{split}
    &
    \left|
    2
    \int_\Omega
    \frac{1}{D(x)}|\vec{u}|^2
    \log f\left(\vec{u}\cdot\nabla D(x)\right) f\,dx
    \right|
    \\
    &\leq
    \frac{3\Cr{const:log_f}\Cr{const:grad_D}\Cr{const:3.Sobolev}^{\frac{3}{2}}}{2\Cr{const:D_Min}}
    \int_\Omega D(x)|\nabla\vec{u}|^2f\,dx
    +
    \frac{\Cr{const:log_f}\Cr{const:grad_D}\Cr{const:3.Sobolev}^{\frac{3}{2}}}{2\Cr{const:D_Min}}
    \left(
    \int_\Omega |\vec{u}|^2f\,dx
    \right)^3.
   \end{split}
 \end{equation}
\end{lemma}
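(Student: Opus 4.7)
The plan is to combine pointwise bounds on the integrand with the already-established cubic interpolation inequality of Lemma \ref{lem:3.Sobolev.cubic}. The only algebraic content is to reduce the left-hand side to a constant multiple of $\int_\Omega |\vec{u}|^3 f\,dx$ and then split that integral via \eqref{eq:3.Sobolev.cubic}. I do not expect any serious obstacle here; this is essentially a bookkeeping argument, and the main thing to verify is that the final constants line up with those in the statement.

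First I would apply Cauchy--Schwarz to $\vec{u}\cdot\nabla D(x)$, obtaining $|\vec{u}\cdot\nabla D(x)| \leq |\vec{u}|\,|\nabla D(x)|$, and then use the pointwise bounds $|\log f|\leq \Cr{const:log_f}$ from Corollary \ref{cor:1.bounds_of_log_f}, $|\nabla D(x)|\leq \Cr{const:grad_D}$ from \eqref{eq:1.grad_D}, and $\tfrac{1}{D(x)}\leq \tfrac{1}{\Cr{const:D_Min}}$ from \eqref{eq:1.D_Min}. Combining these yields the pointwise estimate
\begin{equation*}
  \left|\frac{2}{D(x)}|\vec{u}|^2\log f\,(\vec{u}\cdot\nabla D(x))\,f\right|
  \leq
  \frac{2\Cr{const:log_f}\Cr{const:grad_D}}{\Cr{const:D_Min}}\,|\vec{u}|^3 f,
\end{equation*}
so that the absolute value of the integral on the left-hand side of \eqref{eq:3.Second_Derivative_Free_Energy.2} is controlled by $\frac{2\Cr{const:log_f}\Cr{const:grad_D}}{\Cr{const:D_Min}}\int_\Omega |\vec{u}|^3 f\,dx$.

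Next I would invoke Lemma \ref{lem:3.Sobolev.cubic}, which gives
\begin{equation*}
  \int_\Omega |\vec{u}|^3 f\,dx
  \leq
  \frac{3}{4}\Cr{const:3.Sobolev}^{3/2}\int_\Omega |\nabla\vec{u}|^2 f\,dx
  + \frac{1}{4}\Cr{const:3.Sobolev}^{3/2}\left(\int_\Omega |\vec{u}|^2 f\,dx\right)^3.
\end{equation*}
Multiplying through by $\frac{2\Cr{const:log_f}\Cr{const:grad_D}}{\Cr{const:D_Min}}$ produces exactly the coefficients $\frac{3\Cr{const:log_f}\Cr{const:grad_D}\Cr{const:3.Sobolev}^{3/2}}{2\Cr{const:D_Min}}$ on the gradient term and $\frac{\Cr{const:log_f}\Cr{const:grad_D}\Cr{const:3.Sobolev}^{3/2}}{2\Cr{const:D_Min}}$ on the cubic term that appear in the statement. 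Finally, to upgrade $\int_\Omega |\nabla\vec{u}|^2 f\,dx$ to $\int_\Omega D(x)|\nabla\vec{u}|^2 f\,dx$ I would use $D(x) \geq \Cr{const:D_Min} \geq 1$, which makes the substitution free of charge and completes the proof.
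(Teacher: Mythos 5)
Your proof is correct and follows essentially the same route as the paper: first reduce the integrand via the pointwise bounds on $\log f$, $\nabla D$, and $1/D$ to $\frac{2\Cr{const:log_f}\Cr{const:grad_D}}{\Cr{const:D_Min}}\int_\Omega|\vec{u}|^3 f\,dx$, then apply Lemma~\ref{lem:3.Sobolev.cubic}, and finally use $D(x)\geq\Cr{const:D_Min}\geq1$ to replace $|\nabla\vec{u}|^2$ by $D(x)|\nabla\vec{u}|^2$. The coefficients match exactly.
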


\begin{proof}
By Using \eqref{eq:1.D_Min}, \eqref{eq:1.grad_D}, and
 \eqref{eq:1.bounds_of_log_f}, we have
 \begin{equation}
  \label{eq:3.Second_Derivative_Free_Energy.2-1}
   \left|
    2
    \int_\Omega
    \frac{1}{D(x)}|\vec{u}|^2
    \log f\left(\vec{u}\cdot\nabla D(x)\right) f\,dx
   \right|
   \leq
   \frac{2\Cr{const:log_f}\Cr{const:grad_D}}{\Cr{const:D_Min}}
   \int_\Omega|\vec{u}|^3f\,dx.
 \end{equation}
 Next, employing the interpolation inequality \eqref{eq:3.Sobolev.cubic}
 and we arrive at:
 \begin{equation}
  \label{eq:3.Second_Derivative_Free_Energy.2-2}
   \begin{split}
    \frac{2\Cr{const:log_f}\Cr{const:grad_D}}{\Cr{const:D_Min}}
    \int_\Omega|\vec{u}|^3f\,dx
  &  \leq
    \frac{3\Cr{const:log_f}\Cr{const:grad_D}\Cr{const:3.Sobolev}^{\frac{3}{2}}}{2\Cr{const:D_Min}}
    \int_\Omega|\nabla\vec{u}|^2f\,dx
    \\
    &\qquad
    +
    \frac{\Cr{const:log_f}\Cr{const:grad_D}\Cr{const:3.Sobolev}^{\frac{3}{2}}}{2\Cr{const:D_Min}}
    \left(
    \int_\Omega |\vec{u}|^2f\,dx
    \right)^3.
   \end{split}
 \end{equation}
 Finally combining \eqref{eq:3.Second_Derivative_Free_Energy.2-1} with
 \eqref{eq:3.Second_Derivative_Free_Energy.2-2} and using $D(x)\geq \Cr{const:D_Min} \geq1$,
 we obtain the result \eqref{eq:3.Second_Derivative_Free_Energy.2}.
\end{proof}

Next, we study the integrals involving $\nabla\phi$ and $\nabla^2\phi$ in 
\eqref{eq:3.Second_Derivative_Free_Energy}, by using the positive 2nd term.
We will show that with large diffusion bound $\Cr{const:D_Min}$, we can reduce the original 
\eqref{eq:3.Second_Derivative_Free_Energy} into a specific form.

\begin{proposition}
 \label{prop:3.Second_Derivative_Free_Energy}
 Let $f$ be a solution of \eqref{eq:3.FokkerPlanck}, and let $\vec{u}$ be given as
 in \eqref{eq:3.FokkerPlanck}. Assume the lower bound of the diffusion $D(x)$, $\Cr{const:D_Min}\geq1$ is large enough such that the following condition holds:
 \begin{equation}
  \label{eq:3.Second_Derivative_Free_Energy.3.Assumption}
   \max
   \left\{
   3\Cr{const:log_f}\Cr{const:grad_D}\Cr{const:3.Sobolev}^{\frac{3}{2}},
   2\Cr{const:log_f}\Cr{const:grad_D}\|\nabla\phi\|_{L^\infty(\Omega)},
   4(1+n)(\Cr{const:log_f}+1)^2\Cr{const:grad_D}^2
   \right\}
   \leq
   \Cr{const:D_Min},
 \end{equation}
 where the constant $\Cr{const:log_f}$ is the bound of the solution and
 $\Cr{const:grad_D}$ is the bound of the gradient of $D(x)$, as defined in
 \eqref{eq:1.bounds_of_log_f} and \eqref{eq:1.grad_D} in Section \ref{sec:1}.
 Then we obtain,
 \begin{equation}
 \label{eq:3.Second_Derivative_Free_Energy.3}
 \begin{split}
  \frac{d^2F}{dt^2}[f](t)
 &  \geq
   -2(\lambda+1)
   \int_\Omega
   |\vec{u}|^2
   f
   \,dx
   +
   \int_\Omega
   D(x)|\nabla \vec{u}|^2 f\,dx
   \\
   &\qquad
   -\frac{1}{6}
   \left(
   \int_\Omega
   |\vec{u}|^2
   f
   \,dx
   \right)^3.
   \end{split}
 \end{equation}
\end{proposition}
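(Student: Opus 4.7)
The plan is to combine the two lemmas already at hand, namely the lower bound \eqref{eq:3.Second_Derivative_Free_Energy.1} for $\frac{d^2}{dt^2}F[f]$ and the cubic-term estimate \eqref{eq:3.Second_Derivative_Free_Energy.2}, and then use the largeness assumption \eqref{eq:3.Second_Derivative_Free_Energy.3.Assumption} to absorb all bad coefficients into clean ones. Since \eqref{eq:3.Second_Derivative_Free_Energy.1} still carries the cubic-in-$\vec u$ remainder $2\int_\Omega \frac{|\vec u|^2 \log f(\vec u\cdot\nabla D)}{D(x)}f\,dx$, my first move would be to apply \eqref{eq:3.Second_Derivative_Free_Energy.2} to that remainder, splitting it into a part that gets absorbed by the $\int D(x)|\nabla\vec u|^2 f\,dx$ term and a cubic remainder of the form $C(\int|\vec u|^2 f\,dx)^3$.

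Next I would use the Hessian bound \eqref{eq:1.Hesse_Potential_Lower_Bounds} to replace $(\nabla^2\phi\vec u\cdot\vec u)$ by $-\lambda|\vec u|^2$, collecting the resulting contribution together with the $\bigl(\tfrac12+\tfrac{\Cr{const:log_f}\Cr{const:grad_D}\|\nabla\phi\|_\infty}{\Cr{const:D_Min}}\bigr)|\vec u|^2$ piece from \eqref{eq:3.Second_Derivative_Free_Energy.1}. Now the inequality reads
\begin{equation}
\begin{split}
\frac{d^2}{dt^2}F[f](t) &\geq -2\lambda\int_\Omega|\vec u|^2 f\,dx - \Bigl(1+\tfrac{2\Cr{const:log_f}\Cr{const:grad_D}\|\nabla\phi\|_\infty}{\Cr{const:D_Min}}\Bigr)\int_\Omega|\vec u|^2 f\,dx \\
&\quad + \Bigl(2-\tfrac{2(1+n)\Cr{const:grad_D}^2(\Cr{const:log_f}+1)^2}{\Cr{const:D_Min}}-\tfrac{3\Cr{const:log_f}\Cr{const:grad_D}\Cr{const:3.Sobolev}^{3/2}}{2\Cr{const:D_Min}}\Bigr)\int_\Omega D(x)|\nabla\vec u|^2 f\,dx \\
&\quad - \tfrac{\Cr{const:log_f}\Cr{const:grad_D}\Cr{const:3.Sobolev}^{3/2}}{2\Cr{const:D_Min}}\Bigl(\int_\Omega|\vec u|^2 f\,dx\Bigr)^3.
\end{split}
\end{equation}

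The final step is arithmetic bookkeeping: the three inequalities in \eqref{eq:3.Second_Derivative_Free_Energy.3.Assumption} are precisely calibrated so that (i) $\tfrac{\Cr{const:log_f}\Cr{const:grad_D}\|\nabla\phi\|_\infty}{\Cr{const:D_Min}}\leq\tfrac12$, making the $|\vec u|^2$ coefficient at most $-2\lambda-2=-2(\lambda+1)$; (ii) the two negative contributions to the coefficient of $\int D(x)|\nabla\vec u|^2 f\,dx$ each do not exceed $\tfrac12$, leaving at least $2-\tfrac12-\tfrac12=1$; and (iii) the cubic prefactor satisfies $\tfrac{\Cr{const:log_f}\Cr{const:grad_D}\Cr{const:3.Sobolev}^{3/2}}{2\Cr{const:D_Min}}\leq\tfrac16$. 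Substituting these three bounds yields exactly \eqref{eq:3.Second_Derivative_Free_Energy.3}.

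No genuine obstacle is expected, since the structural work has already been done in the two preceding lemmas and \eqref{eq:3.Second_Derivative_Free_Energy.3.Assumption} is engineered to make the coefficient estimates close. The only thing to watch is ensuring the absorption of the cubic remainder into the positive $D(x)|\nabla\vec u|^2$ integral is done before applying the largeness condition, since the factor $D(x)\geq\Cr{const:D_Min}\geq 1$ from \eqref{eq:1.D_Min} is what allows the first term on the right-hand side of \eqref{eq:3.Second_Derivative_Free_Energy.2} to be dominated by $\int D(x)|\nabla\vec u|^2 f\,dx$ rather than merely $\int|\nabla\vec u|^2 f\,dx$.
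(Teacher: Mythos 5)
Your proposal is correct and follows essentially the same route as the paper's proof: combine \eqref{eq:3.Second_Derivative_Free_Energy.1} with \eqref{eq:3.Second_Derivative_Free_Energy.2}, apply the Hessian lower bound to $(\nabla^2\phi\,\vec u\cdot\vec u)$, and use the three sub-inequalities implied by \eqref{eq:3.Second_Derivative_Free_Energy.3.Assumption} to absorb the bad coefficients. The only slip is in the phrasing of step (i): since the coefficient $-2\lambda-\bigl(1+\tfrac{2\Cr{const:log_f}\Cr{const:grad_D}\|\nabla\phi\|_\infty}{\Cr{const:D_Min}}\bigr)$ is negative and you are proving a lower bound, you want this coefficient to be \emph{at least} $-2(\lambda+1)$ (equivalently, its magnitude is at most $2(\lambda+1)$), not ``at most $-2\lambda-2$''; the inequality you invoke does give exactly that, so the conclusion stands.
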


\begin{remark}
 We again point out the fact  that $\Cr{const:log_f}$ and $\Cr{const:3.Sobolev}$
 are uniformly bounded with respect to $\Cr{const:D_Min}$. The constants
 $\Cr{const:grad_D}$ and $\|\nabla\phi\|_{L^\infty(\Omega)}$ are
 independent of $\Cr{const:D_Min}$, so
  for given $n$, $f_0$, $\phi$,
 \eqref{eq:3.Second_Derivative_Free_Energy.3.Assumption1},
 \eqref{eq:3.Second_Derivative_Free_Energy.3.Assumption3}, and
 \eqref{eq:3.Second_Derivative_Free_Energy.3.Assumption4} below hold for
 sufficiently large $\Cr{const:D_Min}$.
\end{remark}

\begin{proof}
 Note that the condition \eqref{eq:3.Second_Derivative_Free_Energy.3.Assumption} yields the following inequalities:
 \begin{equation}
  \label{eq:3.Second_Derivative_Free_Energy.3.Assumption1}
   \frac{\Cr{const:log_f}\Cr{const:grad_D}\Cr{const:3.Sobolev}^{\frac{3}{2}}}{\Cr{const:D_Min}}
   \leq
   \frac{1}{3},
 \end{equation}
 \begin{equation}
  \label{eq:3.Second_Derivative_Free_Energy.3.Assumption3}
   \frac{\Cr{const:log_f}\Cr{const:grad_D}}{\Cr{const:D_Min}}
   \|\nabla\phi\|_{L^\infty(\Omega)}
   \leq
   \frac{1}{2},
 \end{equation}
 and
 \begin{equation}
  \label{eq:3.Second_Derivative_Free_Energy.3.Assumption4}
   (\Cr{const:log_f}+1)^2
   \Cr{const:grad_D}^2
   \leq
   \frac{1}{4(1+n)}\Cr{const:D_Min}.
 \end{equation}
 
 First, combining \eqref{eq:3.Second_Derivative_Free_Energy.1} and
 \eqref{eq:3.Second_Derivative_Free_Energy.2}, we obtain
 \begin{equation}
  \label{eq:3.Second_Derivative_Free_Energy.3.1}
  \begin{split}
   \frac{d^2F}{dt^2}[f](t)
   &\geq
   2\int_\Omega
   [
   (\nabla^2\phi(x)\vec{u}\cdot\vec{u})
   \\
   &\qquad\qquad
   -
   \biggl(
   \frac{1}{2}
   +
   \frac{\Cr{const:log_f}\Cr{const:grad_D}}{\Cr{const:D_Min}}
   \|\nabla \phi\|_{L^\infty(\Omega)}
  \biggr ) |\vec{u}|^2
   ] f\,dx
   \\
   &\qquad
   +
   2\int_\Omega
   \biggl(
   1
   -
   \frac{(1+n)\Cr{const:grad_D}^2(\Cr{const:log_f}+1)^2}{\Cr{const:D_Min}}
   \biggr)
   D(x)|\nabla \vec{u}|^2 f\,dx
   \\
   &\qquad
   -
   \frac{3\Cr{const:log_f}\Cr{const:grad_D}\Cr{const:3.Sobolev}^{\frac{3}{2}}}{2\Cr{const:D_Min}}
   \int_\Omega D(x)|\nabla\vec{u}|^2f\,dx
   \\
  &\qquad
   -
   \frac{\Cr{const:log_f}\Cr{const:grad_D}\Cr{const:3.Sobolev}^{\frac{3}{2}}}{2\Cr{const:D_Min}}
   \left(
   \int_\Omega |\vec{u}|^2f\,dx
   \right)^3.
  \end{split}
 \end{equation}

 We consider the integral of $|\vec{u}|^2f$ in
 \eqref{eq:3.Second_Derivative_Free_Energy.3.1}. Using
 \eqref{eq:1.Hesse_Potential_Lower_Bounds}, we get,
 \begin{equation}
  (\nabla^2\phi(x)\vec{u}\cdot\vec{u})
   \geq
    -
    \lambda
    |\vec{u}|^2.
 \end{equation}
 Then applying
 the assumption
 \eqref{eq:3.Second_Derivative_Free_Energy.3.Assumption3}, we have
 \begin{equation}
  \label{eq:3.Second_Derivative_Free_Energy.3.2}
  \begin{split}
&   2\int_\Omega
   \biggl(
   (\nabla^2\phi(x)\vec{u}\cdot\vec{u})
   -
   \Bigl(
   \frac{1}{2}
   +
   \frac{\Cr{const:log_f}\Cr{const:grad_D}}{\Cr{const:D_Min}}
   \|\nabla \phi\|_{L^\infty(\Omega)}
   \Bigr) |\vec{u}|^2
   \biggr) f\,dx
    \\
 &\quad
   \geq
   -2(\lambda+1)
   \int_\Omega
   |\vec{u}|^2
   f
   \,dx.
   \end{split}
 \end{equation}
 Next we consider the integral of $D(x)|\nabla\vec{u}|^2f$ in
 \eqref{eq:3.Second_Derivative_Free_Energy.3.1}. 
 Using \eqref{eq:3.Second_Derivative_Free_Energy.3.Assumption1}, we
 obtain,
 \begin{equation}
  \label{eq:3.Second_Derivative_Free_Energy.3.3}
  \begin{split}
&   2
   \biggl(
   1
   -
   \frac{(1+n)\Cr{const:grad_D}^2(\Cr{const:log_f}+1)^2}{\Cr{const:D_Min}}
   \biggr)
   -
   \frac{3\Cr{const:log_f}\Cr{const:grad_D}\Cr{const:3.Sobolev}^{\frac{3}{2}}}{2\Cr{const:D_Min}}
   \\
   &\quad
   \geq
   2
   \biggl(
   1
   -
   \frac{(1+n)\Cr{const:grad_D}^2(\Cr{const:log_f}+1)^2}{\Cr{const:D_Min}}
    \biggr)
    -\frac{1}{2}.
    \end{split}
 \end{equation}
 Thus, by 
 \eqref{eq:3.Second_Derivative_Free_Energy.3.Assumption4} that
 \begin{equation}
  \label{eq:3.Second_Derivative_Free_Energy.3.4}
   \frac{(1+n)\Cr{const:grad_D}^2(\Cr{const:log_f}+1)^2}{\Cr{const:D_Min}}
   \leq
   \frac{1}{4}.
 \end{equation}
 Combining \eqref{eq:3.Second_Derivative_Free_Energy.3.3} and
 \eqref{eq:3.Second_Derivative_Free_Energy.3.4}, we arrive at
 \begin{equation}
  \label{eq:3.Second_Derivative_Free_Energy.3.5}
  \begin{split}
   2\int_\Omega
   \biggl(
   1
  & -
   \frac{(1+n)\Cr{const:grad_D}^2(\Cr{const:log_f}+1)^2}{\Cr{const:D_Min}}
   \biggr)
   D(x)|\nabla \vec{u}|^2 f\,dx
   \\
 &  -
   \frac{3\Cr{const:log_f}\Cr{const:grad_D}\Cr{const:3.Sobolev}^{\frac{3}{2}}}{2\Cr{const:D_Min}}
   \int_\Omega D(x)|\nabla\vec{u}|^2f\,dx
   \geq
   \int_\Omega
   D(x)|\nabla \vec{u}|^2 f\,dx.
   \end{split}
 \end{equation}
 Finally, use \eqref{eq:3.Second_Derivative_Free_Energy.3.Assumption1}
 for the coefficient of the last term in the right-hand side of \eqref{eq:3.Second_Derivative_Free_Energy.3.1}, we have
 \begin{equation}
  \label{eq:3.Second_Derivative_Free_Energy.3.6}
   \frac{\Cr{const:log_f}\Cr{const:grad_D}\Cr{const:3.Sobolev}^{\frac{3}{2}}}{2\Cr{const:D_Min}}
   \leq
   \frac{1}{6}.
 \end{equation}
 With the fact that $\Cr{const:log_f}$ and $\Cr{const:3.Sobolev}$ are
independent of $\Cr{const:D_Min}$, 
 combining \eqref{eq:3.Second_Derivative_Free_Energy.3.2},
 \eqref{eq:3.Second_Derivative_Free_Energy.3.5}, and
 \eqref{eq:3.Second_Derivative_Free_Energy.3.6}, we obtain
 \eqref{eq:3.Second_Derivative_Free_Energy.3}.
\end{proof}

Using Proposition \ref{prop:3.Second_Derivative_Free_Energy}
and with suitably large $\Cr{const:D_Min}$, 
we further reduce inequality  the dissipation rate functional \eqref{eq:3.Second_Derivative_Free_Energy.3}.

\begin{lemma}
 \label{lem:3.Differential_Inequality_Dissipation}
 Let $f$ be a solution of \eqref{eq:3.FokkerPlanck}, and let $\vec{u}$
 be given as in \eqref{eq:3.FokkerPlanck}.  Then for a given $\gamma>0$,
 there exists a sufficiently large positive constant
 $\Cr{const:D_Min}\geq1$ which depends only on $n$,
 $\|\phi\|_{L^\infty(\Omega)}$, $\|\nabla\phi\|_{L^\infty(\Omega)}$,
 $\lambda$ from \eqref{eq:1.Hesse_Potential_Lower_Bounds}(the
 lower bound of the Hessian of $\phi$), $\Cr{const:InitMin}$,
 $\Cr{const:InitMax}$ defined in \eqref{eq:1.Initial}(the bounds of the
 initial datum $f_0$), $\Cr{const:grad_D}$ defined in
 \eqref{eq:1.grad_D}(the bound of the gradient of $D$),
 $\Cr{const:3.Sobolev}$ appeared in \eqref{eq:3.SobolevType}, and
 $\gamma$ such that if \eqref{eq:1.D_Min} holds, then,
 \begin{equation}
  \label{eq:3.Differential_Inequality_Dissipation}
   \frac{d^2}{dt^2}F[f](t)
   \geq
   \gamma\int_{\Omega}
   |\vec{u}|^2
   f\,dx
   -
   \frac{1}{6}
   \left(
   \int_\Omega
   |\vec{u}|^2
   f
   \,dx
   \right)^3.
 \end{equation}
\end{lemma}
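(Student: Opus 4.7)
The plan is to combine Proposition~\ref{prop:3.Second_Derivative_Free_Energy} with the Poincar\'e-type inequality of Lemma~\ref{lem:3.Poincare}, then enlarge $\Cr{const:D_Min}$ so that the ``good'' linear term from diffusion dominates the ``bad'' linear term $-2(\lambda+1)\int_\Omega|\vec{u}|^2 f\,dx$. Since the cubic term on the right-hand side of \eqref{eq:3.Second_Derivative_Free_Energy.3} already has exactly the form required by \eqref{eq:3.Differential_Inequality_Dissipation}, it needs no further work.

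First I would assume $\Cr{const:D_Min}\geq 1$ is large enough to satisfy the hypothesis \eqref{eq:3.Second_Derivative_Free_Energy.3.Assumption} of Proposition~\ref{prop:3.Second_Derivative_Free_Energy}, so that
\begin{equation}
 \frac{d^2}{dt^2}F[f](t)
 \geq
 -2(\lambda+1)\int_\Omega |\vec{u}|^2 f\,dx
 +
 \int_\Omega D(x)|\nabla\vec{u}|^2 f\,dx
 -\frac{1}{6}\left(\int_\Omega |\vec{u}|^2 f\,dx\right)^3
\end{equation}
holds. Next, applying $D(x)\geq\Cr{const:D_Min}$ from \eqref{eq:1.D_Min} and then the Poincar\'e-type estimate \eqref{eq:3.PoincareType} gives
\begin{equation}
 \int_\Omega D(x)|\nabla\vec{u}|^2 f\,dx
 \geq
 \frac{\Cr{const:D_Min}}{\Cr{const:3.Poincare}}
 \int_\Omega |\vec{u}|^2 f\,dx,
\end{equation}
which reduces the claim to showing that the effective linear coefficient $\Cr{const:D_Min}/\Cr{const:3.Poincare}-2(\lambda+1)$ can be made at least $\gamma$.

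Finally I would sharpen the lower bound on $\Cr{const:D_Min}$ by additionally requiring
\begin{equation}
 \Cr{const:D_Min}
 \geq
 \Cr{const:3.Poincare}\bigl(\gamma+2(\lambda+1)\bigr),
\end{equation}
on top of \eqref{eq:3.Second_Derivative_Free_Energy.3.Assumption}. The key point that makes this choice legitimate, and which is really the crux of the whole section, is that by Remark~\ref{rk1_8} and the remark immediately following Lemma~\ref{lem:3.Poincare}, the constants $\Cr{const:log_f}$, $\Cr{const:3.Sobolev}$ and $\Cr{const:3.Poincare}$ depend only on $n$, $f_0$, $\phi$ and $\Cr{const:grad_D}$, and are \emph{independent} of $\Cr{const:D_Min}$. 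Hence both conditions on $\Cr{const:D_Min}$ are of the form ``$\Cr{const:D_Min}$ exceeds an expression in $n$, $\lambda$, $\gamma$, $\|\phi\|_{L^\infty(\Omega)}$, $\|\nabla\phi\|_{L^\infty(\Omega)}$, $\Cr{const:InitMin}$, $\Cr{const:InitMax}$, $\Cr{const:grad_D}$, $\Cr{const:3.Sobolev}$'' and do not loop back on $\Cr{const:D_Min}$ itself. I do not foresee a real obstacle here: the genuine technical work (control of the $\nabla D$ cross terms and of the cubic term via the Sobolev interpolation \eqref{eq:3.Sobolev.cubic}) has already been absorbed into Proposition~\ref{prop:3.Second_Derivative_Free_Energy}, so this final lemma is essentially a bookkeeping step.
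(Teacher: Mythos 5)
Your proof is correct and matches the paper's own argument step for step: invoke Proposition~\ref{prop:3.Second_Derivative_Free_Energy}, bound $\int_\Omega D(x)|\nabla\vec u|^2 f\,dx$ from below using \eqref{eq:1.D_Min} and the Poincar\'e-type inequality \eqref{eq:3.PoincareType}, then enlarge $\Cr{const:D_Min}$ so that the effective linear coefficient dominates $\gamma$, noting that $\Cr{const:3.Poincare}$ is independent of $\Cr{const:D_Min}$ so the choice does not loop. Your observation about the uniformity of the constants is precisely the subtlety the paper relies on, and your explicit bound $\Cr{const:D_Min}\geq\Cr{const:3.Poincare}(\gamma+2(\lambda+1))$ is simply the quantified form of the paper's ``take $\Cr{const:D_Min}$ further sufficiently large.''
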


\begin{proof}
 By Lemma \ref{lem:3.Poincare} together with \eqref{eq:1.D_Min},
 \begin{equation}
  \label{eq:3.Differential_Inequality_Dissipation1}
   \begin{split}
   &
   -2(\lambda+1)
   \int_\Omega
   |\vec{u}|^2
   f
   \,dx
   +
   \int_\Omega
   D(x)|\nabla \vec{u}|^2 f\,dx
    \\
   &\geq
   -2(\lambda+1)
   \int_\Omega
   |\vec{u}|^2
   f
   \,dx
   +
   \Cr{const:D_Min}
   \int_\Omega
   |\nabla \vec{u}|^2 f\,dx
   \\
   &\geq
   \left(
   -2(\lambda+1)
   +
   \frac{\Cr{const:D_Min}}{\Cr{const:3.Poincare}}
   \right)
   \int_\Omega
   |\vec{u}|^2
   f
   \,dx.
   \end{split}
 \end{equation}
 Note that $\Cr{const:3.Poincare}$ depends only on $n$,
 $\Cr{const:InitMin}$, $\Cr{const:InitMax}$ defined in
 \eqref{eq:1.Initial}, $\Cr{const:grad_D}$ defined in
 \eqref{eq:1.grad_D}, and $\|\phi\|_{L^\infty(\Omega)}$. 
 Thus, first we take large $\Cr{const:D_Min}\geq1$ such that the
 assumptions \eqref{eq:3.Second_Derivative_Free_Energy.3.Assumption1},
 \eqref{eq:3.Second_Derivative_Free_Energy.3.Assumption3}, and
 \eqref{eq:3.Second_Derivative_Free_Energy.3.Assumption4} hold. Next,
 for $\gamma>0$, take $\Cr{const:D_Min}\geq1$ further sufficiently large
 such that
 \begin{equation}
  -2(\lambda+1)
   +
   \frac{\Cr{const:D_Min}}{\Cr{const:3.Poincare}}
 \geq
 \gamma.
 \end{equation}
 Then, we can use
 \eqref{eq:3.Second_Derivative_Free_Energy.3} 
 and \eqref{eq:3.Differential_Inequality_Dissipation1},
 hence we obtain
 \eqref{eq:3.Differential_Inequality_Dissipation}.
\end{proof}

Next, we will recall the following Gronwall-type inequality from
\cite{MR4506846}.

\begin{lemma}
 [{Lemma 3.16 in \cite{MR4506846}}]
 \label{lem:3.Gronwall}
  Let $c,d,p>0$ be positive constants, such that $p>1$. Let
 $g:[0,\infty)\rightarrow\R$ be a non-negative function that satisfies the
 following differential inequality,
 \begin{equation}
  \label{eq:3.Gronwall}
  \frac{dg}{dt}\leq -cg+dg^p.
 \end{equation}
 If
 \begin{equation}
  g(0)< \left(
	 \frac{c}{d}
	\right)^{\frac{1}{p-1}},
 \end{equation}
 then,  we obtain for $t>0$,
 \begin{equation}
  \label{eq:3.54}
  g(t)
   \leq
   \left(
    g(0)^{-p+1}-\frac{d}{c}
   \right)^{-\frac{1}{p-1}}
   e^{-ct}.
 \end{equation}
\end{lemma}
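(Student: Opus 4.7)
The plan is to reduce this Bernoulli-type inequality to a linear one via the classical substitution $h(t) := g(t)^{1-p}$, solve the resulting linear differential inequality with an integrating factor, and then invert the substitution. I will at first work on the open set where $g(t) > 0$ and verify a posteriori that this is all of $[0,\infty)$.

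Where $g > 0$, dividing the hypothesis $g' \leq -cg + dg^p$ through by $g^p$ preserves the inequality and yields $g^{-p} g' \leq -c g^{1-p} + d$. From $h = g^{1-p}$ one computes $h' = (1-p)\, g^{-p} g'$, and since $p > 1$ forces $1 - p < 0$, multiplication by the negative factor $1-p$ reverses the inequality, producing the linear differential inequality
\begin{equation*}
h'(t) - (p-1) c\, h(t) \geq -(p-1) d.
\end{equation*}

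Multiplying through by the integrating factor $e^{-(p-1) c t}$ and integrating from $0$ to $t$ then yields
\begin{equation*}
h(t) \geq e^{(p-1) c t} \left( h(0) - \frac{d}{c} \right) + \frac{d}{c}.
\end{equation*}
The hypothesis $g(0) < (c/d)^{1/(p-1)}$ rearranges, via the order-reversing map $y \mapsto y^{-(p-1)}$ on positive reals, to $h(0) = g(0)^{-(p-1)} > d/c$, so the leading factor is strictly positive; in particular $h(t) \geq e^{(p-1) c t}(h(0) - d/c) > 0$ for all $t \geq 0$. This positivity verifies that $g$ cannot reach zero, so the earlier restriction to the set $\{g > 0\}$ is harmless.

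Finally, applying the order-reversing map $y \mapsto y^{-1/(p-1)}$ to the lower bound on $h$ produces
\begin{equation*}
g(t) = h(t)^{-1/(p-1)} \leq \left( g(0)^{-(p-1)} - \frac{d}{c} \right)^{-1/(p-1)} e^{-c t},
\end{equation*}
which is precisely \eqref{eq:3.54}. This is essentially the textbook Bernoulli ODE computation carried out with inequalities throughout. The only real point requiring care is the bookkeeping of inequality directions under the three order-reversing operations (dividing by $g^p > 0$ is order-preserving but multiplication by $1-p < 0$ and raising to the negative exponent $-1/(p-1)$ are not), and the only technical obstacle is propagating positivity of $g$, which is handled by the strict lower bound on $h$.
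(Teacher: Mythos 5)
The proof is correct and is the standard Bernoulli substitution $h = g^{1-p}$, which is exactly the argument behind the cited Lemma 3.16. The only point I would tighten is the sentence claiming that positivity of $h$ ``verifies that $g$ cannot reach zero.'' That is not what the lower bound on $h$ shows: since $h = g^{1-p}$ with $1-p<0$, the limit $g \to 0^{+}$ corresponds to $h \to +\infty$, which is still positive; your lower bound on $h$ therefore yields an \emph{upper} bound on $g$, not a lower one, and does not by itself exclude $g$ hitting zero. The restriction to $\{g>0\}$ is nonetheless harmless, but for a different reason: on the maximal interval $[0,t^{*})$ with $g>0$ the computation gives the stated bound, and if $g(t^{*})=0$ then, since $-cg+dg^{p}<0$ for $0<g<(c/d)^{1/(p-1)}$, the function $g$ cannot become positive again past $t^{*}$; it remains zero, so the estimate \eqref{eq:3.54} holds trivially for $t \ge t^{*}$. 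With that small repair to the justification, the argument is complete.
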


This Gronwall-type inequality will allow us to show  the exponential decay of
the dissipation rate of the  free energy in \eqref{eq:3.EnergyLaw},
the main result of this Section.

\begin{proof}
 [Proof of Theorem \ref{thm:3}] 
 For any $\gamma>0$, using Lemma
 \ref{lem:3.Differential_Inequality_Dissipation}, take sufficiently
 large positive number $\Cr{const:D_Min}\geq1$. Then with specifically
 the following quantities,
 \eqref{eq:3.Differential_Inequality_Dissipation} becomes
 \eqref{eq:3.Gronwall}:
 \begin{equation*}
  g(t)=-\frac{d}{dt}F[f](t)
  =
  \int_\Omega 
  |\vec{u}|^2
  f\,dx,\quad
  c=\gamma,\quad
  p=3,\quad
  \text{and}
  \quad
  d=\frac{1}{6}.  
 \end{equation*} 
 Thus, if
 \begin{equation}
  g(0)
   =
  \int_\Omega
   |\nabla (D(x)\log f_0(x)+\phi(x))|^2
   f_0\,dx
   <
   \left(
    6\gamma
   \right)^{\frac{1}{2}},
 \end{equation}
 then by Lemma \ref{lem:3.Gronwall}
 \begin{equation}
    \begin{split}
        g(t)
        &=
        \int_\Omega
        |\vec{u}|^2f
        \,dx
        \\
        &\leq
        \left(
            \left(
            \int_\Omega
            |\nabla (D(x)\log f_0(x)+\phi(x))|^2
            f_0\,dx
            \right)^{-2}
            -
        \frac{1}{6\gamma}
        \right)^{-\frac{1}{2}}
        e^{-\gamma t}.
    \end{split}
 \end{equation}
Taking
 $\Cr{const:3.Initial_Energy}=(6\gamma)^{\frac{1}{2}}$ and
 \begin{equation}
  \Cr{const:3.Exponential_Coefficient}
   =
   \left(
    \left(
     \int_\Omega
     |\nabla (D(x)\log f_0(x)+\phi(x))|^2
     f_0\,dx
    \right)^{-2}
    -
    \frac{1}{6\gamma}
   \right)^{-\frac{1}{2}},
 \end{equation}
we will arrive at the conclusion of  Theorem \ref{thm:3}.
\end{proof}

In this Section, we had demonstrated the exponential decay of the time
derivative of the free energy in the case that $D(x)$ is inhomogeneous
and the mobility is constant. In the next section, we will consider the
case of inhomogeneous diffusion $D(x)$ and variable mobility $\pi(x,t)$.


\section{Inhomogeneous diffusion case with variable mobility}
\label{sec:4}

This section will be devoted to the
following nonlinear Fokker-Planck equation with inhomogeneity in both
diffusion $D(x)$ and mobility $\pi(x,t)$, which are bounded periodic
positive functions defined in a bounded domain $\Omega$ in the Euclidean
space of $n$-dimension.

\begin{equation}
 \label{eq:4.FokkerPlanck}
 \left\{
  \begin{aligned}
   \frac{\partial f}{\partial t}
   &+
   \Div
   \left(
   f\vec{u}
   \right)
   =
   0,
   &\quad
   &x\in\Omega,\quad
   t>0, \\
   \vec{u}
   &=
   -
   \frac{1}{\pi(x,t)}
   \nabla
   \left(
   D(x)\log f
   +
   \phi(x)
   \right),
   &\quad
   &x\in\Omega,\quad
   t>0, \\
   f(&x, 0)=f_0(x),&\quad
   &x\in\Omega.
  \end{aligned}
 \right.
\end{equation}
Again, without loss of generality, we take $\Omega=[0,1)^n\subset\R^n$.
For the convenience of the readers, we recall (as defined in Section 1)
that the periodic function $D(x)$ is bounded from below with the
constant $\Cr{const:D_Min}\geq1$, and the periodic function $\pi(x,t)$
is bounded both from below and above by the positive constants
$\Cr{const:Pi_Min}$ and $\Cr{const:Pi_Max}$, namely
\begin{equation*}
 D(x)\geq\Cr{const:D_Min},\quad
  \Cr{const:Pi_Min}\leq \pi(x,t)\leq \Cr{const:Pi_Max}
\end{equation*}
for any $x\in\Omega$ and $t>0$.

The free energy $F$ and the basic energy law \eqref{eq:1.EnergyLaw}
still takes the standard form:
\begin{equation}
 \label{eq:4.FreeEnergy}
  F[f]
  :=
  \int_\Omega
  \left(
   D(x)f(\log f -1)+f\phi(x)
  \right)
  \,dx,
\end{equation}
and the system satisfies the following energy dissipation law:
\begin{equation}
 \label{eq:4.EnergyLaw}
  \frac{dF}{dt}[f](t)
  =
  -\int_\Omega
  \pi(x,t)
  |\vec{u}|^2f\,dx
  =:
  - D_{\mathrm{dis}}[f](t).
\end{equation}

We will first state the main result of this section: for a given system in \eqref{eq:4.FokkerPlanck},
with suitable conditions of the initial data and the mobility, under relatively mild inhomogeneity conditions,
 one can find a diffusion $D(x)$ that
is large enough, such that the system will convergence exponentially
to a equilibrium.

\begin{theorem}
 \label{thm:4}
 Assume $n=1,2,3$. For a fixed constant  $\gamma>0$, 
 there exist positive constants
 $\Cr{const:D_Min},$ 
 $\Cr{const:Pi_Time},$ 
 $\Cr{const:grad_Pi}$ 
and
 $\Cl{const:4.Initial_Energy}$, which  depend only on the given constant $\gamma$, $n$, the potential  $\phi$ (in terms of 
 Hessian bound $\lambda$
 defined in \eqref{eq:1.Hesse_Potential_Lower_Bounds},
 $\|\phi\|_{L^\infty(\Omega)}$, and $\|\nabla\phi\|_{L^\infty(\Omega)}$), 
 the bound of initial data $\Cr{const:InitMin}$, $\Cr{const:InitMax}$ defined in \eqref{eq:1.Initial}, the bound $\Cr{const:grad_D}$ of $\nabla D(x)$
 defined in \eqref{eq:1.grad_D}, and  the bounds for the mobility $\Cr{const:Pi_Min}$,
 $\Cr{const:Pi_Max}$ defined in \eqref{eq:1.PiMinMax},
 such that if \eqref{eq:1.D_Min}, \eqref{eq:1.Pi_Time},
 \eqref{eq:1.grad_Pi} hold and,
 \begin{equation}
  \label{eq:4.Initial_Free_Energy}
  \int_\Omega
  \pi(x,0)
  |\nabla D\log f_0+\phi(x)|^2\,dx
  \leq
  \Cr{const:4.Initial_Energy},
 \end{equation}
 then  for $t>0$, the following is true,
 \begin{equation}
  \label{eq:4.Exponential_Coefficient}
  \int_\Omega
  \pi(x,t)
  |\vec{u}|^2
   f\,dx
   \leq
   \Cr{const:4.Exponential_Coefficient}
   e^{-\gamma t},
 \end{equation}
 for a  positive constant
 $\Cl{const:4.Exponential_Coefficient}>0$.
\end{theorem}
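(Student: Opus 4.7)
The strategy extends that of Section~\ref{sec:3} by working directly with the time derivative of the dissipation rate $g(t):=\int_\Omega \pi(x,t)|\vec{u}|^2 f\,dx$. I would first compute $\frac{d^2}{dt^2}F[f](t)=-\frac{dg}{dt}$; the analogue of \eqref{eq:3.Second_Derivative_Free_Energy} contains all terms already handled in Section~\ref{sec:3} (the Hessian of $\phi$, the positive $D(x)|\nabla\vec{u}|^2 f$ term, and the lower-order contributions from $\nabla D$, $\log f$, and $\vec{u}$), together with two new families of terms: those carrying $\pi_t$ as a factor against $|\vec{u}|^2 f$, and those carrying $\nabla\pi$ against $\vec{u}$. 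The latter appear because differentiating the continuity equation through $\pi^{-1}$ produces extra pieces, and because $\vec{u}$ is no longer a pure gradient.

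Next I would establish the analogues of the Sobolev and Poincar\'e estimates proved in Section~\ref{sec:3}. The critical new difficulty is that $\int_\Omega \vec{u}\,dx$ is no longer automatically zero: writing $\vec{u}=-\pi^{-1}\nabla(D\log f+\phi)$ and integrating by parts on the torus yields $\int_\Omega \vec{u}\,dx=\int_\Omega (D\log f+\phi)\nabla(\pi^{-1})\,dx$, which is controlled by $|\nabla\pi|\le\Cr{const:grad_Pi}$ together with Corollary~\ref{cor:1.bounds_of_log_f} and the upper bound on $D$. This yields a weighted Poincar\'e inequality of the form $\int_\Omega \pi |\vec{u}|^2 f\,dx\le C\int_\Omega|\nabla\vec{u}|^2 f\,dx$ plus a perturbation proportional to $\Cr{const:grad_Pi}^2$, absorbable when $\Cr{const:grad_Pi}$ is small and $\Cr{const:D_Min}$ is large. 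The cubic interpolation inequality $\int_\Omega |\vec{u}|^3 f\,dx\le C\bigl(\int_\Omega|\nabla\vec{u}|^2 f\,dx+(\int_\Omega|\vec{u}|^2 f\,dx)^3\bigr)$ follows by the same argument as in Lemma~\ref{lem:3.Sobolev.cubic}.

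With these inequalities in hand, I would estimate the new $\pi$-dependent contributions. The term $\int_\Omega \pi_t|\vec{u}|^2 f\,dx$ is bounded in absolute value by $\Cr{const:Pi_Time}\int_\Omega|\vec{u}|^2 f\,dx$; the $\nabla\pi$ terms, after Cauchy's inequality, split into a small multiple of $\int_\Omega D(x)|\nabla\vec{u}|^2 f\,dx$ (absorbed by the positive term) plus a multiple of $\int_\Omega|\vec{u}|^2 f\,dx$ whose coefficient is proportional to $\Cr{const:grad_Pi}^2/\Cr{const:D_Min}$. Choosing $\Cr{const:D_Min}$ sufficiently large and $\Cr{const:Pi_Time},\Cr{const:grad_Pi}$ sufficiently small, all in terms of the listed parameters, produces the analogue of Proposition~\ref{prop:3.Second_Derivative_Free_Energy}:
\begin{equation*}
\frac{d^2}{dt^2}F[f](t)\ \ge\ \gamma\int_\Omega \pi|\vec{u}|^2 f\,dx\ -\ \frac{1}{6}\Bigl(\int_\Omega \pi|\vec{u}|^2 f\,dx\Bigr)^{3},
\end{equation*}
equivalently $\frac{dg}{dt}\le -\gamma g+\frac{1}{6}g^{3}$. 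The exponential decay \eqref{eq:4.Exponential_Coefficient} then follows at once from the Gronwall-type Lemma~\ref{lem:3.Gronwall} with $c=\gamma$, $p=3$, $d=1/6$, provided $g(0)<(6\gamma)^{1/2}$, which is precisely the role of the smallness hypothesis \eqref{eq:4.Initial_Free_Energy}.

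The main obstacle, I expect, will be the simultaneous loss of potential structure for $\vec{u}$ and the time dependence of $\pi$. Careful bookkeeping is required so that each new perturbation is either absorbed by the positive $D(x)|\nabla\vec{u}|^2 f$ contribution (using largeness of $\Cr{const:D_Min}$) or dominated by the coercive part $\gamma\int_\Omega \pi|\vec{u}|^2 f\,dx$ (using smallness of $\Cr{const:Pi_Time}$ and $\Cr{const:grad_Pi}$). Making this trade-off explicit, so that a single admissible choice of constants yields the clean differential inequality needed for Lemma~\ref{lem:3.Gronwall}, is the principal technical task.
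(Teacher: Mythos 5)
Your overall strategy — differentiate the free energy twice, absorb the perturbative terms into the positive $\int D|\nabla\vec u|^2 f$ contribution, derive a Gronwall-type inequality — matches the paper's. But there is a genuine gap in the way you propose to recover a Poincar\'e inequality for $\vec u$. Writing $\int_\Omega \vec u\,dx = \int_\Omega (D\log f+\phi)\,\nabla(\pi^{-1})\,dx$ by parts only gives the bound
\begin{equation}
\Bigl|\,\overline{\vec u}\,\Bigr|
\;\le\;
\frac{\Cr{const:grad_Pi}}{\Cr{const:Pi_Min}^2}\,
\bigl\|D\log f+\phi\bigr\|_{L^1(\Omega)},
\end{equation}
an \emph{additive constant} (and one that grows with $\|D\|_{L^\infty}\sim\Cr{const:D_Min}$), not something proportional to $\|\vec u\|$. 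Feeding this into the mean-zero Sobolev inequality produces a Poincar\'e estimate with an additive, rather than multiplicative, error; the resulting differential inequality takes the shape $\dot g\le -cg+dg^3+e$ with a positive constant $e$, which Lemma~\ref{lem:3.Gronwall} cannot handle. The paper sidesteps this entirely: since $\pi\vec u=-\nabla(D\log f+\phi)$ \emph{is} a gradient with zero mean, one applies the mean-zero Sobolev inequality directly to $\nabla(D\log f+\phi)$ (Lemma~\ref{lem:3.Sobolev}), then converts to $\vec u$, $\nabla\vec u$ via $\Cr{const:Pi_Min}\le\pi\le\Cr{const:Pi_Max}$; the extra $\nabla\pi$ term that arises from differentiating $\pi^{-1}$ is what produces the harmless $\varepsilon\int|\vec u|^2f$ correction in \eqref{eq:4.SobolevType}, and the smallness hypothesis \eqref{eq:4.Poincare_Assumption} is precisely what lets that correction be absorbed with a \emph{multiplicative} constant, giving \eqref{eq:4.Poincare}. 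If you wish to reason via $\overline{\vec u}$, the usable observation is $\int_\Omega\pi\vec u\,dx=0$, which yields $|\overline{\vec u}|\le C\Cr{const:grad_Pi}\bigl(\int_\Omega|\vec u|^2f\,dx\bigr)^{1/2}$ and is therefore absorbable — but that is not the identity you wrote.

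Two smaller points. First, because the Sobolev inequality in this setting carries the extra $\varepsilon|\vec u|^2$ term, the cubic interpolation inequality (Lemma~\ref{lem:4.Sobolev.cubic}) necessarily has three terms on the right, including a linear $\int|\vec u|^2 f\,dx$ contribution, which you dropped; that term must be tracked and folded into the coercive part alongside the Hessian bound. Second, in the final Gronwall step one compares $g(t)=\int\pi|\vec u|^2 f$ with $\int|\vec u|^2 f$, which costs factors of $\Cr{const:Pi_Min}$: the paper's constants are $d=\tfrac{1}{12\,\Cr{const:Pi_Min}^3}$ and the threshold $(12\gamma\Cr{const:Pi_Min}^3)^{1/2}$ rather than your $d=1/6$ and $(6\gamma)^{1/2}$.
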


\begin{remark}
 Unlike Sections 2 and  3, the velocity $\vec{u}$ in this section is not a gradient field,
 which means that  $\overline{\vec{u}}\neq0$. We will need to re-evaluate  those estimates obtained in the previous sections
 and if necessary, derive the ones under the new conditions.
 Here we start with  the Sobolev-type inequality for $\vec{u}$ with the mean value
 $\overline{\vec{u}}$.
\end{remark}

\begin{lemma}
 \label{lem:4.Sobolev} Let $n$ be a natural number, 
 for $\varepsilon>0$,   
 the positive constants $\Cr{const:Pi_Min}, \Cr{const:grad_Pi}$ being defined as the bounds of the mobility
 \eqref{eq:1.PiMinMax} and the bound of the derivative of the mobility
 \eqref{eq:1.grad_Pi} respectively.
 Let $f$ be a solution of \eqref{eq:4.FokkerPlanck}, and let $\vec{u}$
 be given as in \eqref{eq:4.FokkerPlanck}. 
 If
 \begin{equation}
  \label{eq:4.Sobolev_Assumption}
  \Cr{const:grad_Pi}
   \leq
   \Cr{const:Pi_Min}
   \left(
   \frac{\varepsilon}{2}\right)^{\frac{1}{2}},
 \end{equation}
 then 
 there exists $\Cl{const:4.Sobolev}>0$ which depends
 only on $n$, $\pi$ (in terms of $\Cr{const:Pi_Min}$,
 $\Cr{const:Pi_Max}$ defined in \eqref{eq:1.PiMinMax}), $f_0$ (in terms
 of $\Cr{const:InitMin}$, $\Cr{const:InitMax}$ defined on
 \eqref{eq:1.Initial}), the gradient of $D(x)$ (in terms of
 $\Cr{const:grad_D}$ defined in \eqref{eq:1.grad_D}), and
 $\|\phi\|_{L^\infty(\Omega)}$ such that,
 \begin{equation}
  \label{eq:4.SobolevType}
  \left(
   \int_\Omega
   |\vec{u}|^{p^*}f\,dx
  \right)^{\frac{1}{p^*}}
  \leq
  \Cr{const:4.Sobolev}
  \left(
   \int_\Omega
   \left(
    2|\nabla\vec{u}|^{2}
    +
    \varepsilon|\vec{u}|^{2}
   \right)
   f\,dx
  \right)^{\frac{1}{2}},
 \end{equation}
 where $2< p^*<\infty$ for $n=1,2$, and
 $\frac{1}{p^*}=\frac{1}{2}-\frac{1}{n}$ for $n\geq3$.
\end{lemma}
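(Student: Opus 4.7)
The main difficulty relative to Lemma \ref{lem:2.Sobolev} is that $\vec{u}=-\pi^{-1}\nabla(D\log f+\phi)$ is no longer a pure gradient, so its spatial mean $\overline{\vec{u}}:=|\Omega|^{-1}\int_\Omega\vec{u}\,dx$ need not vanish, and the mean-zero Sobolev--Poincaré inequality on the torus cannot be applied to $\vec{u}$ directly. The plan is therefore to split $\vec{u}=(\vec{u}-\overline{\vec{u}})+\overline{\vec{u}}$, handle the mean-zero part by the usual Sobolev embedding exactly as in Lemma \ref{lem:2.Sobolev}, and control the constant piece $\overline{\vec{u}}$ using the fact that the product $\pi\vec{u}$ \emph{is} a gradient.

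First I would invoke the uniform bounds $e^{-\Cr{const:log_f}}\leq f\leq e^{\Cr{const:log_f}}$ from Corollary \ref{cor:1.bounds_of_log_f} (see Remark \ref{rk1_8}) to reduce the $f$-weighted inequality to its unweighted counterpart, at the cost of a multiplicative factor $e^{\Cr{const:log_f}(1/p^*+1/2)}$. The triangle inequality together with the standard mean-zero Sobolev inequality on the torus then yields
\begin{equation*}
\left(\int_\Omega|\vec{u}|^{p^*}\,dx\right)^{1/p^*}
\leq
C_{1}\left(\int_\Omega|\nabla\vec{u}|^{2}\,dx\right)^{1/2}
+
|\overline{\vec{u}}|,
\end{equation*}
where $C_{1}$ depends only on $n$ and I have used $|\Omega|=1$ to drop the factor $|\Omega|^{1/p^*}$. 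Everything now reduces to bounding $|\overline{\vec{u}}|$.

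The key step is the estimate for $|\overline{\vec{u}}|$. Since $\pi\vec{u}=-\nabla(D\log f+\phi)$, periodicity gives $\int_\Omega\pi\vec{u}\,dx=0$. Writing $\pi=\overline{\pi}+(\pi-\overline{\pi})$ and using $\overline{\pi}\geq\Cr{const:Pi_Min}$ I obtain
\begin{equation*}
\overline{\pi}\,\overline{\vec{u}}
=
-\int_\Omega(\pi-\overline{\pi})\vec{u}\,dx,
\end{equation*}
and Cauchy--Schwarz together with the scalar Poincaré inequality $\|\pi-\overline{\pi}\|_{L^2}\leq C_{2}\|\nabla\pi\|_{L^2}\leq C_{2}\Cr{const:grad_Pi}$ produces
\begin{equation*}
|\overline{\vec{u}}|
\leq
\frac{C_{2}\Cr{const:grad_Pi}}{\Cr{const:Pi_Min}}\,\|\vec{u}\|_{L^{2}(\Omega)}.
\end{equation*}
The hypothesis $\Cr{const:grad_Pi}\leq\Cr{const:Pi_Min}(\varepsilon/2)^{1/2}$ then converts this into $|\overline{\vec{u}}|^{2}\leq C_{2}^{2}(\varepsilon/2)\|\vec{u}\|_{L^{2}}^{2}$, which is precisely the $\varepsilon|\vec{u}|^{2}$ contribution appearing on the right-hand side of \eqref{eq:4.SobolevType}.

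Finally I would combine the two estimates via $(a+b)^{2}\leq 2a^{2}+2b^{2}$, reinstate the $f$-weight by the Harnack loss $e^{\Cr{const:log_f}(1/p^*+1/2)}$, and read off $\Cr{const:4.Sobolev}$. The only genuine obstacle is the third step, where the non-gradient structure of $\vec{u}$ is circumvented by transferring the mean to $\pi\vec{u}$; the smallness assumption \eqref{eq:4.Sobolev_Assumption} on $\Cr{const:grad_Pi}/\Cr{const:Pi_Min}$ is exactly what is needed to make this substitute for the usual Poincaré inequality quantitative at the scale $\varepsilon$, so that the resulting inequality can later be closed into a dissipative differential inequality as in Section~\ref{sec:3}.
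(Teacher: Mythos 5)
Your proof is correct, but it takes a genuinely different route from the paper's. The paper never decomposes $\vec{u}$ into mean plus mean-zero part. Instead it observes that $-\pi\vec{u}=\nabla(D\log f+\phi)$ is itself a gradient, so it applies Lemma~\ref{lem:3.Sobolev} directly to that gradient field, pulls out the factor $1/\pi$ using the bounds \eqref{eq:1.PiMinMax}, and then converts the resulting Hessian $\nabla^{2}(D\log f+\phi)$ into $\nabla\vec{u}$ via the product rule,
\begin{equation}
\nabla\vec{u}=-\frac{1}{\pi}\nabla^{2}(D\log f+\phi)+\frac{\nabla\pi}{\pi^{2}}\otimes\nabla(D\log f+\phi),
\end{equation}
followed by Young's inequality; the error term is exactly $\frac{2}{\pi^{2}}|\nabla\pi|^{2}|\vec{u}|^{2}$, and the smallness assumption \eqref{eq:4.Sobolev_Assumption} makes its coefficient at most $\varepsilon$. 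Your argument, by contrast, applies the mean-zero Sobolev inequality directly to $\vec{u}$ and uses the gradient structure of $\pi\vec{u}$ only to kill the constant mode: since $\int_\Omega\pi\vec{u}\,dx=0$, you get $\overline{\pi}\,\overline{\vec{u}}=-\int_\Omega(\pi-\overline{\pi})\vec{u}\,dx$, and a Poincar\'e inequality for the scalar $\pi$ turns this into a bound $|\overline{\vec{u}}|\le \frac{C_2\Cr{const:grad_Pi}}{\Cr{const:Pi_Min}}\|\vec{u}\|_{L^2}$. Both routes exploit the same structural fact (that $\pi\vec{u}$ has a potential); the paper's commutator-style argument re-uses Lemma~\ref{lem:3.Sobolev} verbatim and yields the explicit constant $\Cr{const:4.Sobolev}=\Cr{const:Pi_Max}\Cr{const:3.Sobolev}/\Cr{const:Pi_Min}$, while yours is more elementary and closer to a classical Sobolev--Poincar\'e splitting, at the price of an extra Poincar\'e constant $C_2$ entering. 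One small inaccuracy worth fixing: the bound $|\overline{\vec{u}}|^{2}\le C_{2}^{2}(\varepsilon/2)\|\vec{u}\|_{L^{2}}^{2}$ is not ``precisely'' the $\varepsilon|\vec{u}|^{2}$ term on the right of \eqref{eq:4.SobolevType} because of the leftover factor $C_{2}^{2}$; that factor must be absorbed into $\Cr{const:4.Sobolev}$ along with the Harnack factors, which is harmless (it is $\varepsilon$-independent and depends only on the allowed quantities), but it should be stated rather than elided.
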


\begin{proof}
 First, using the definition of $\Cr{const:Pi_Min}$, the positive bounds for the mobility \eqref{eq:1.PiMinMax}, we have,
 \begin{equation}
  \begin{split}
   \left(
   \int_\Omega
   |\vec{u}|^{p^*}f\,dx
   \right)^{\frac{1}{p^*}}
   &=
   \left(
   \int_\Omega
   \left|
   \frac{1}{\pi(x,t)}
   \nabla
   \left(
   D(x)\log f
   +
   \phi(x)
   \right)
   \right|^{p^*}f\,dx
   \right)^{\frac{1}{p^*}}
   \\
   &\leq
   \frac{1}{\Cr{const:Pi_Min}}
   \left(
   \int_\Omega
   \left|
   \nabla
   \left(
   D(x)\log f
   +
   \phi(x)
   \right)
   \right|^{p^*}f\,dx
   \right)^{\frac{1}{p^*}}.
  \end{split}
 \end{equation}
 Then, we can use Lemma \ref{lem:3.Sobolev} that
 \begin{equation}
 \begin{split}
&  \left(
   \int_\Omega
   \left|
   \nabla
   \left(
   D(x)\log f
   +
   \phi(x)
   \right)
   \right|^{p^*}f\,dx
   \right)^{\frac{1}{p^*}}
    \\
&\quad   \leq
   \Cr{const:3.Sobolev}
   \left(
    \int_\Omega
    |
    \nabla^2
    \left(
     D(x)\log f
     +
     \phi(x)
    \right)
    |^{2}f\,dx
   \right)^{\frac{1}{2}}.
   \end{split}
 \end{equation}
 Using the definition of $\Cr{const:Pi_Min}$ and $\Cr{const:Pi_Max}$ in
 the positive bounds for the mobility \eqref{eq:1.PiMinMax}, we obtain
 \begin{equation}
  \left(
   \int_\Omega
   |\vec{u}|^{p^*}f\,dx
  \right)^{\frac{1}{p^*}}
  \leq
  \frac{\Cr{const:Pi_Max}\Cr{const:3.Sobolev}}{\Cr{const:Pi_Min}}
   \left(
    \int_\Omega
    \left|
    -\frac{1}{\pi(x,t)}
    \nabla^2
    \left(
     D(x)\log f
     +
     \phi(x)
    \right)
    \right|^{2}
    f\,dx
   \right)^{\frac{1}{2}}.
 \end{equation}
 Here
 \begin{equation}
    \begin{split}
    \nabla\vec{u}
    &=
    -\frac{1}{\pi(x,t)}
    \nabla^2
    \left(
     D(x)\log f
     +
     \phi(x)
    \right)
    \\
    &\quad
    +
    \frac{1}{\pi^2(x,t)}
    \nabla\pi(x,t)
    \otimes
    \nabla
    \left(
     D(x)\log f
     +
     \phi(x)
    \right).
    \end{split}
 \end{equation}
 Thus, we obtain by the Young inequality and the definition of $\vec{u}$
 that,
 \begin{equation}
  \begin{split}
   &
   \left|
   -\frac{1}{\pi(x,t)}
   \nabla^2
   \left(
   D(x)\log f
   +
   \phi(x)
   \right)
   \right|^{2}
   \\
   &\leq
   2
   |\nabla\vec{u}|^2
   +
   2
   \left|
   \frac{1}{\pi^2(x,t)}
   \nabla\pi(x,t)
   \otimes
   \nabla
   \left(
   D(x)\log f
   +
   \phi(x)
   \right)
   \right|^2
   \\
   &=
   2
   |\nabla\vec{u}|^2
   +
   2
   \left|
   \frac{1}{\pi(x,t)}
   \nabla\pi(x,t)
   \otimes
   \vec{u}
   \right|^2
   \\
   &=
   2
   |\nabla\vec{u}|^2
   +
   \frac{2}{\pi^2(x,t)}
   \left|
   \nabla\pi(x,t)
   \right|^2
   \left|
   \vec{u}
   \right|^2.
  \end{split}
 \end{equation}
 Therefore, we arrive at
 \begin{equation*}
 \begin{split}
&  \left(
   \int_\Omega
   |\vec{u}|^{p^*}f\,dx
	 \right)^{\frac{1}{p^*}}
    \\
    &\quad
  \leq
  \frac{\Cr{const:Pi_Max}\Cr{const:3.Sobolev}}{\Cr{const:Pi_Min}}
  \left(
   \int_\Omega
   \left(
   2
   |\nabla\vec{u}|^2
   +
   \frac{2}{\pi^2(x,t)}
   \left|
   \nabla\pi(x,t)
   \right|^2
   \left|
   \vec{u}
   \right|^2
   \right)
   f\,dx
  \right)^{\frac{1}{2}}.
  \end{split}
 \end{equation*}
 Finally, we compute the coefficient of $|\vec{u}|^2$ in the right hand
 side. Using \eqref{eq:4.Sobolev_Assumption}, the positive bounds for the mobility \eqref{eq:1.PiMinMax}, and the bound for the derivatives of the mobility \eqref{eq:1.grad_Pi}, we obtain
 \begin{equation}
  \frac{1}{\pi^2(x,t)}
   \left|
    \nabla\pi(x,t)
   \right|^2
   \leq
   \frac{\Cr{const:grad_Pi}^2}{\Cr{const:Pi_Min}^2}
   \leq
   \frac{\varepsilon}{2},
 \end{equation}
 hence we obtain \eqref{eq:4.SobolevType} by taking
 $\Cr{const:4.Sobolev}=\frac{\Cr{const:Pi_Max}\Cr{const:3.Sobolev}}{\Cr{const:Pi_Min}}$.
\end{proof}


\begin{remark}
 We emphasize that the  Sobolev-type constant $\Cr{const:4.Sobolev}$ in the above lemma is independent of
 $\Cr{const:grad_Pi}$, hence the gradient of the mobility $\pi$.
\end{remark}

With this result, next we will show that if the  bounds for the gradient mobility
$\Cr{const:grad_Pi}$, defined in \eqref{eq:1.grad_Pi}, is not too large, 
then we can obtain the Poincare type inequality for $\vec{u}$.

\begin{lemma}
 \label{lem:4.Poincare}
 Let $f$ be a solution of \eqref{eq:4.FokkerPlanck}, and let $\vec{u}$
 be given as in \eqref{eq:4.FokkerPlanck}. 
 Assume  the bound for the gradient mobility
$\Cr{const:grad_Pi}$, defined in \eqref{eq:1.grad_Pi} satisfies the following condition:
  \begin{equation}
   \label{eq:4.Poincare_Assumption}
    \Cr{const:grad_Pi}
    \leq
    \frac{\Cr{const:Pi_Min}}{2\Cr{const:4.Sobolev}},
  \end{equation}
 where $\Cr{const:Pi_Min}$ is the lower bound of $\pi$ appeared in \eqref{eq:1.PiMinMax}, $\Cr{const:4.Sobolev}$ is appeared in
 \eqref{eq:4.SobolevType}.
 Then, there is a constant
 $\Cl{const:4.Poincare}>0$ 
 which depends
 only on $n$, $\pi$ (in terms of $\Cr{const:Pi_Min}$,
 $\Cr{const:Pi_Max}$ defined in \eqref{eq:1.PiMinMax}), $f_0$ (in terms
 of $\Cr{const:InitMin}$, $\Cr{const:InitMax}$ defined in
 \eqref{eq:1.Initial}), the gradient of $D(x)$ (in terms of
 $\Cr{const:grad_D}$ defined in \eqref{eq:1.grad_D}), and
 $\|\phi\|_{L^\infty(\Omega)}$ such that:
 \begin{equation}
  \label{eq:4.Poincare}
   \int_\Omega|\vec{u}|^2f\,dx
   \leq
   \Cr{const:4.Poincare}
   \int_\Omega
   |\nabla\vec{u}|^{2}
   f\,dx.
 \end{equation}
\end{lemma}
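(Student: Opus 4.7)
The plan is to derive the Poincar\'e-type bound directly from Lemma \ref{lem:4.Sobolev} by choosing the free parameter $\varepsilon$ in that lemma so small that the $|\vec{u}|^{2}$ term appearing on the right-hand side of the Sobolev-type estimate can be absorbed into the left-hand side. The hypothesis \eqref{eq:4.Poincare_Assumption} is tailored precisely so that this choice is compatible with the hypothesis \eqref{eq:4.Sobolev_Assumption} of Lemma \ref{lem:4.Sobolev}, and the overall strategy mirrors the proofs of Lemma \ref{lem:2.Poincare} and Lemma \ref{lem:3.Poincare}.

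Concretely, I would set $\varepsilon = 1/(2\Cr{const:4.Sobolev}^{2})$. A direct computation shows that this turns \eqref{eq:4.Poincare_Assumption} into the equivalent form $\Cr{const:grad_Pi} \leq \Cr{const:Pi_Min}\sqrt{\varepsilon/2}$ required in \eqref{eq:4.Sobolev_Assumption}. With this $\varepsilon$, Lemma \ref{lem:4.Sobolev} applies and yields
\begin{equation*}
\left(\int_\Omega |\vec{u}|^{p^*} f\,dx\right)^{1/p^*}
\leq
\Cr{const:4.Sobolev}\left(\int_\Omega \bigl(2|\nabla\vec{u}|^{2}+\varepsilon|\vec{u}|^{2}\bigr)f\,dx\right)^{1/2},
\end{equation*}
where $p^{*}>2$ is selected as in the earlier Sobolev lemmas (for instance $p^{*}=6$ when $n=1,2$, and $1/p^{*}=1/2-1/n$ when $n\geq3$).

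Next, following the same route as in Lemma \ref{lem:2.Poincare} and Lemma \ref{lem:3.Poincare}, I would use H\"older's inequality together with the mass-conservation identity \eqref{eq:1.Mass_Conservation} to obtain
\begin{equation*}
\left(\int_\Omega |\vec{u}|^{2}f\,dx\right)^{1/2}
\leq
\left(\int_\Omega |\vec{u}|^{p^{*}}f\,dx\right)^{1/p^{*}}.
\end{equation*}
Squaring this and combining with the Sobolev-type estimate produces
\begin{equation*}
\int_\Omega |\vec{u}|^{2}f\,dx
\leq
2\Cr{const:4.Sobolev}^{2}\int_\Omega |\nabla\vec{u}|^{2}f\,dx
+
\varepsilon\Cr{const:4.Sobolev}^{2}\int_\Omega |\vec{u}|^{2}f\,dx.
\end{equation*}
By the choice $\varepsilon = 1/(2\Cr{const:4.Sobolev}^{2})$, the last coefficient equals $1/2$, so absorbing that term into the left-hand side yields \eqref{eq:4.Poincare} with $\Cr{const:4.Poincare} = 4\Cr{const:4.Sobolev}^{2}$, and the declared parameter dependence is inherited verbatim from $\Cr{const:4.Sobolev}$.

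The only delicate point is the simultaneous matching of the two smallness conditions on $\Cr{const:grad_Pi}$: the choice of $\varepsilon$ must satisfy the hypothesis of Lemma \ref{lem:4.Sobolev} while still leaving $\varepsilon\Cr{const:4.Sobolev}^{2}$ strictly less than $1$ so the absorption step is legitimate. The factor of $2$ in the denominator of \eqref{eq:4.Poincare_Assumption} is arranged exactly so this is possible, and I do not anticipate any further technical obstacle.
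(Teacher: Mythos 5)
Your proof is correct and follows essentially the same route as the paper: apply H\"older's inequality with mass conservation to pass from $L^2(f\,dx)$ to $L^{p^*}(f\,dx)$, invoke Lemma \ref{lem:4.Sobolev} with $\varepsilon = 1/(2\Cr{const:4.Sobolev}^2)$ (which makes \eqref{eq:4.Sobolev_Assumption} coincide with \eqref{eq:4.Poincare_Assumption}), and absorb the resulting $\tfrac12\int|\vec{u}|^2f$ term, yielding $\Cr{const:4.Poincare}=4\Cr{const:4.Sobolev}^2$. The only cosmetic difference is that the paper somewhat confusingly writes ``take $p^*=2$'' at the absorption step; your version, keeping $p^*>2$ throughout and using H\"older to reduce to $L^2$, is if anything cleaner.
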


\begin{remark}
 Note from the proof of Lemma \ref{lem:4.Sobolev}, that
 $\Cr{const:4.Sobolev}$ is taken as
 $\Cr{const:4.Sobolev}=\frac{\Cr{const:Pi_Max}\Cr{const:3.Sobolev}}{\Cr{const:Pi_Min}}$. Thus,
 the condition \eqref{eq:4.Poincare_Assumption} can be written as
 $\Cr{const:grad_Pi} \leq
 \frac{\Cr{const:Pi_Min}^2}{2\Cr{const:Pi_Max}\Cr{const:3.Sobolev}}$.
\end{remark}

\begin{proof}
 By the H\"older inequality and \eqref{eq:1.Mass_Conservation}, we have
\begin{equation}
 \left(
  \int_\Omega|\vec{u}|^2f\,dx
 \right)^{\frac{1}{2}}
 \leq
 \left(
  \int_\Omega|\vec{u}|^{p^*}f\,dx
 \right)^{\frac{1}{p^*}}.
\end{equation}
 We choose $\varepsilon>0$ later and assume
 \eqref{eq:4.Sobolev_Assumption}. Then, by the Sobolev type inequality
 \eqref{eq:4.SobolevType}, we obtain
 \begin{equation}
  \left(
   \int_\Omega|\vec{u}|^{p^*}f\,dx
  \right)^{\frac{1}{p^*}}
  \leq
  \Cr{const:4.Sobolev}
  \left(
   \int_\Omega
   \left(
   2|\nabla\vec{u}|^{2}
   +
   \varepsilon
   |\vec{u}|^{2}
   \right)
   f\,dx
  \right)^{\frac{1}{2}}.
 \end{equation}
 Now we choose $\varepsilon$ as
 \begin{equation}
  \label{eq:4.Poincare_epsilon}
  \Cr{const:4.Sobolev}^2
  \varepsilon
  =
  \frac{1}{2},
\end{equation}
and take $p^{*} = 2.$
 Then, we obtain
 \begin{equation}
  \frac{1}{2}
   \int_\Omega|\vec{u}|^2f\,dx
   \leq
   2\Cr{const:4.Sobolev}^2
   \int_\Omega
   |\nabla\vec{u}|^{2}
   f\,dx.
 \end{equation}
 Plugging \eqref{eq:4.Poincare_epsilon} into
 \eqref{eq:4.Sobolev_Assumption}, we get
 \eqref{eq:4.Poincare_Assumption}. Taking
 $\Cr{const:4.Poincare}=4\Cr{const:4.Sobolev}^2$, we obtain
 \eqref{eq:4.Poincare}.
\end{proof}


In contrast to Lemma \ref{lem:3.Sobolev}, the Sobolev-type of 
inequality \eqref{eq:4.SobolevType} has to include  an extra quadratic term of $\vec{u}$ in the
right hand side. Here we will  re-derive the  interpolation
inequality like that in  Lemma \ref{lem:3.Sobolev.cubic}.

\begin{lemma}
 \label{lem:4.Sobolev.cubic}
 Let $n=1,2,3$ and
 let
 $\varepsilon>0$, and let $\Cr{const:Pi_Min}, \Cr{const:grad_Pi}>0$ be
 the constants defined in the bounds for the mobility
 \eqref{eq:1.PiMinMax} and the bound of the derivative of the mobility
 \eqref{eq:1.grad_Pi} respectively. 
 Let $f$ be a solution of \eqref{eq:4.FokkerPlanck}, and let $\vec{u}$
 be given as in \eqref{eq:4.FokkerPlanck}. 
 Under the condition:
 \begin{equation}
  \label{eq:4.Sobolev.cubic.assumption}
  \Cr{const:grad_Pi}\leq\Cr{const:Pi_Min},
 \end{equation}
 we will have the following estimate:
 \begin{equation}
  \label{eq:4.Sobolev.cubic}
  \begin{split}
   \int_\Omega|\vec{u}|^3f\,dx
  & \leq
   \frac{3}{2}
   \Cr{const:4.Sobolev}^{\frac{3}{2}}
   \int_\Omega|\nabla \vec{u}|^2f\,dx
   +
   \frac{3}{2}
   \Cr{const:4.Sobolev}^{\frac{3}{2}}
   \int_\Omega|\vec{u}|^2f\,dx
   \\
&\quad
   +
   \frac{1}{4}
   \Cr{const:4.Sobolev}^{\frac{3}{2}}
   \left(
   \int_\Omega|\vec{u}|^2f\,dx
   \right)^3,
   \end{split}
 \end{equation}
 where $\Cr{const:4.Sobolev}$ is the constant defined in the Sobolev estimate 
 \eqref{eq:4.SobolevType} in Lemma \ref{lem:4.Sobolev}.
\end{lemma}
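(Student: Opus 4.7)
The plan is to mirror the interpolation argument of Lemma~\ref{lem:3.Sobolev.cubic} (and \cite[Lemma 3.14]{MR4506846}), the only new ingredient being that the Sobolev--type bound \eqref{eq:4.SobolevType} now carries the additional term $\varepsilon\int_\Omega|\vec u|^2f\,dx$ on the right-hand side; this extra term is precisely what will produce the middle summand in \eqref{eq:4.Sobolev.cubic}. The overall strategy has three steps: interpolate $L^3(f\,dx)$ between $L^2(f\,dx)$ and $L^{p^*}(f\,dx)$ by H\"older, invoke Lemma~\ref{lem:4.Sobolev} with a specific choice of $\varepsilon$, and finally apply Young's inequality to split the resulting mixed product.

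For the first step, writing $f=f^{\theta}\cdot f^{1-\theta}$ and applying H\"older's inequality with exponents $1/\theta$ and $1/(1-\theta)$ chosen so that $2\theta+p^*(1-\theta)=3$, one obtains
\begin{equation*}
\int_\Omega|\vec u|^3 f\,dx
\leq
\left(\int_\Omega|\vec u|^2 f\,dx\right)^{\theta}
\left(\int_\Omega|\vec u|^{p^*} f\,dx\right)^{1-\theta}.
\end{equation*}
Since $n\in\{1,2,3\}$, Lemma~\ref{lem:4.Sobolev} permits the choice $p^*=6$, which gives $\theta=3/4$ and $1-\theta=1/4$.

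For the second step, I would apply \eqref{eq:4.SobolevType} with $\varepsilon=2$; the hypothesis \eqref{eq:4.Sobolev_Assumption} with this choice reduces exactly to $\Cr{const:grad_Pi}\leq\Cr{const:Pi_Min}$, which is the standing assumption \eqref{eq:4.Sobolev.cubic.assumption}. Raising the Sobolev estimate to the $6/4$ power yields
\begin{equation*}
\left(\int_\Omega|\vec u|^6 f\,dx\right)^{1/4}
\leq
\Cr{const:4.Sobolev}^{3/2}
\left(\int_\Omega\bigl(2|\nabla\vec u|^2+2|\vec u|^2\bigr)f\,dx\right)^{3/4}.
\end{equation*}
Inserting this into the H\"older estimate and rewriting $A^{3/4}B^{3/4}=(A^3)^{1/4}B^{3/4}$ with $A=\int_\Omega|\vec u|^2 f\,dx$ and $B=\int_\Omega(2|\nabla\vec u|^2+2|\vec u|^2)f\,dx$, Young's inequality in the form $X^{1/4}Y^{3/4}\leq \tfrac14 X+\tfrac34 Y$ produces exactly the three summands stated, with coefficient $\tfrac14\Cr{const:4.Sobolev}^{3/2}$ on the cubic term and two copies of $\tfrac34\cdot 2\cdot\Cr{const:4.Sobolev}^{3/2}=\tfrac32\Cr{const:4.Sobolev}^{3/2}$ on $\int_\Omega|\nabla\vec u|^2 f\,dx$ and $\int_\Omega|\vec u|^2 f\,dx$.

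I do not anticipate a serious obstacle: the argument is a routine H\"older--Sobolev--Young interpolation. The only point requiring care is to tune $\varepsilon=2$ in Lemma~\ref{lem:4.Sobolev} so that its hypothesis matches \eqref{eq:4.Sobolev.cubic.assumption} precisely and so that the three numerical constants $\tfrac32,\tfrac32,\tfrac14$ in \eqref{eq:4.Sobolev.cubic} are produced with no free parameter left to optimize.
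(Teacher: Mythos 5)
Your argument is correct and follows the same route as the paper's: Hölder interpolation of $L^3$ between $L^2$ and $L^{6}$ with weight $f$, the Sobolev estimate of Lemma~\ref{lem:4.Sobolev} applied with $\varepsilon=2$ (so that \eqref{eq:4.Sobolev_Assumption} collapses to \eqref{eq:4.Sobolev.cubic.assumption}), and Young's inequality with exponents $(4/3,4)$; the only cosmetic difference is that the paper first introduces general exponents $\alpha,\beta,q,q'$ subject to the constraints $3\alpha q=p^*$, $3\beta q'=2$, $\alpha+\beta=1$ and solves to find $\alpha=\beta=\tfrac12$, $q=4$, $p^*=6$, whereas you fix $p^*=6$ and $\theta=3/4$ directly. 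The computation of the constants $\tfrac32,\tfrac32,\tfrac14$ matches.
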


\begin{proof}
 We consider the case $n=3$ first. Let $\alpha,\beta>0$ be constants
 satisfying $\alpha+\beta=1$, and let $q>1$ be an exponent. Then, by the
 H\"older's inequality,
 \begin{equation}
  \label{eq:4.Sobolev.cubic.assumption1}
   \int_\Omega
   |\vec{u}|^3f
   \,dx
   \leq
   \left(
    \int_\Omega
   |\vec{u}|^{3\alpha q}f
   \,dx
   \right)^{\frac{1}{q}}
   \left(
    \int_\Omega
   |\vec{u}|^{3\beta q'}f
   \,dx
   \right)^{\frac{1}{q'}},
 \end{equation}
 where $q'$ is the H\"older's conjugate, namely, $\frac1q+\frac1{q'}=1$.
 Next, we put a constraint, $3\alpha q=p^\ast$, in order to apply the
 Sobolev type inequality \eqref{eq:4.SobolevType} with $\varepsilon=2$.
 Note that \eqref{eq:4.Sobolev.cubic.assumption} guarantees the
 assumption \eqref{eq:4.Sobolev_Assumption} in Lemma
 \ref{lem:4.Sobolev}. Then \eqref{eq:4.Sobolev.cubic.assumption1} turns
 into
 \begin{equation}
   \int_\Omega
   |\vec{u}|^3f
   \,dx
   \leq
    \Cr{const:4.Sobolev}^{\frac{p^\ast}{q}}
   \left(
    \int_\Omega
    2(|\nabla\vec{u}|^2+|\vec{u}|^2)
    f
    \,dx
   \right)^{\frac{p^\ast}{2q}}
   \left(
    \int_\Omega
    |\vec{u}|^{3\beta q'}f
   \,dx
   \right)^{\frac{1}{q'}}.
 \end{equation}
 Next, we set other constraints, $3\beta q'=2$ and
 $\frac{p^\ast}{2q}<1$. The Young's inequality implies,
 \begin{equation}
  \label{eq:4.Sobolev.cubic2}
   \begin{split}
    &
    \left(
    \int_\Omega
    2(|\nabla\vec{u}|^2+|\vec{u}|^2)
    f
    \,dx
    \right)^{\frac{p^\ast}{2q}}
    \left(
    \int_\Omega
    |\vec{u}|^{3\beta q'}f
    \,dx
    \right)^{\frac{1}{q'}}
    \\
    &\quad
    \leq
    \frac{p^\ast}{q}
    \int_\Omega
    (|\nabla\vec{u}|^2+|\vec{u}|^2)
    f
    \,dx
   +
    \left(
    1-
    \frac{p^\ast}{2q}
    \right)
    \left(
    \int_\Omega
    |\vec{u}|^{2}f
    \,dx
    \right)^{\frac{1}{q'}
    \left(
    1-
    \frac{p^\ast}{2q}
    \right)^{-1}
    },
   \end{split}
 \end{equation}
which yields:
 \begin{equation}
  \label{eq:4.Sobolev.cubic3}
  \begin{split}
   \int_\Omega
   |\vec{u}|^3f
   \,dx
 &  \leq
   \Cr{const:4.Sobolev}^{\frac{p^\ast}{q}}
   \frac{p^*}{q}
  \int_\Omega
    (|\nabla\vec{u}|^2+|\vec{u}|^2)
    f
    \,dx
    \\
    &\qquad
   +
   \Cr{const:4.Sobolev}^{\frac{p^\ast}{q}}
   \left(
   1-
   \frac{p^\ast}{2q}
   \right)
   \left(
   \int_\Omega
   |\vec{u}|^{2}f
   \,dx
   \right)^{\frac{1}{q'}
   \left(
   1-
    \frac{p^\ast}{2q}
   \right)^{-1}
   }
  .
  \end{split}
 \end{equation}

 Now we can come back to  examine the constraints. Since $3\alpha q=p^\ast$, $3\beta q'=2$, $\alpha+\beta=1$, and the properties of $q'$, $p^\ast$ imply that,
$  \frac{3\beta}{2}=\frac{1}{q'}=1-\frac{3\alpha}{p^\ast}=1-\frac{3\alpha}{2}+\frac{3\alpha}{n}.$
From these, one can deduce that  $\alpha=\frac{n}{6}=\frac{1}{2}$, and in turns, 
 $\beta=\frac{1}{2}$, $q=4$ and $p^*=6$. Plugging these values in \eqref{eq:4.Sobolev.cubic3} and
 we can obtain 
 \eqref{eq:4.Sobolev.cubic} directly.

 For the case $n=1,2$, we can take $p^\ast=6$, the same as in the case
 $n=3$. Then it is easy to verify that by taking $\alpha=\beta=\frac{1}{2}$ and $q=4$ in
 \eqref{eq:4.Sobolev.cubic3}, one can obtain the inequality
 \eqref{eq:4.Sobolev.cubic}.
\end{proof}

Next we  recall the following higher order energy law of  \eqref{eq:4.FokkerPlanck}, which can be derived by
direct computations as we had done in  \cite{MR4506846}. 

\begin{lemma}%
 [{\cite[Proposition 4.15]{MR4506846}}]
 Let $f$ be a solution of \eqref{eq:4.FokkerPlanck}, and let $\vec{u}$ be given as
 in \eqref{eq:4.FokkerPlanck}. Then, we have the following energy law,
 \begin{equation}
  \label{eq:4.Second_Derivative_Energy_Law}
   \begin{split}
    \frac{d^2}{dt^2}F[f](t)
    &=
    2\int_\Omega
    ((\nabla^2\phi(x))\vec{u}\cdot\vec{u})f\,dx
    +
    2\int_\Omega D(x)|\nabla \vec{u}|^2f\,dx
    \\
    &\qquad
    -
    \int_\Omega
    (\log f-1)
    \nabla |\vec{u}|^2\cdot\nabla D(x) f
    \,dx
   \\
   &\qquad
    -
    2
    \int_\Omega
    (1+\log f)
    \vec{u}\cdot\nabla D(x)\Div\vec{u} f\,dx
    \\
    &\qquad
    +
    2
    \int_\Omega
    \frac{\pi(x,t)}{D(x)}|\vec{u}|^2
    \log f\left(\vec{u}\cdot\nabla D(x)\right) f\,dx
    \\
    &\qquad
    +
    2
     \int_\Omega
    \frac{1}{D(x)}
    (\log f)^2\left(\vec{u}\cdot\nabla D(x)\right)^2 f\,dx
    \\
    &\qquad
    +
    2\int_\Omega
   \frac{1}{D(x)}
    \log f\left(\vec{u}\cdot\nabla D(x)\right)\left(\vec{u}\cdot\nabla \phi(x)\right) f\,dx
    \\
    &\qquad
    +
    \int_\Omega\pi_t(x,t)|\vec{u}|^2f\,dx
    +
    \int_\Omega
    |\vec{u}|^2\vec{u}\cdot\nabla \pi(x,t) f
    \,dx
    \\
    &\qquad
    -2
    \int_\Omega
    (\log f-1)
    \frac{1}{\pi(x,t)}
    |\vec{u}|^2\nabla\pi(x,t)\cdot \nabla D(x)
    f\,dx
    \\
    &\qquad
    +2
    \int_\Omega
    (\log f-1)
    \frac{1}{\pi(x,t)}
    (\vec{u}\cdot\nabla\pi(x,t))(\vec{u}\cdot\nabla D(x))
    f\,dx
    \\
    &\qquad
    +
    \int_\Omega
    \frac{D(x)}{\pi(x,t)}((\nabla|\vec{u}|^2)\cdot\nabla\pi(x,t))
    f\,dx
    \\
    &\qquad
    -
    2
    \int_\Omega
    \frac{D(x)}{\pi(x,t)}((\nabla\vec{u})\vec{u}\cdot\nabla\pi(x,t))
    f\,dx.
   \end{split}
 \end{equation}
\end{lemma}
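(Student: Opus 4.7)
The plan is to obtain the identity by differentiating the first-order energy law \eqref{eq:4.EnergyLaw} once more in time. Starting from $\frac{d}{dt}F[f](t)=-\int_\Omega \pi(x,t)|\vec{u}|^2 f\,dx$, I differentiate under the integral to obtain three groups of terms:
\begin{equation*}
\frac{d^2}{dt^2}F[f](t)
= -\int_\Omega \pi_t|\vec{u}|^2 f\,dx
-2\int_\Omega \pi\,\vec{u}\cdot\vec{u}_t\, f\,dx
-\int_\Omega \pi|\vec{u}|^2\partial_t f\,dx.
\end{equation*}
The first group is immediately a term on the right-hand side of \eqref{eq:4.Second_Derivative_Energy_Law}. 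The third group can be handled by substituting the continuity equation $\partial_t f=-\Div(f\vec{u})$ and integrating by parts with the periodic boundary condition to move the divergence onto $\pi|\vec{u}|^2$; this yields the term $\int_\Omega|\vec{u}|^2\vec{u}\cdot\nabla\pi\,f\,dx$ together with contributions of the form $\int_\Omega \pi \nabla|\vec{u}|^2\cdot\vec{u}\, f\,dx$, which I will rearrange in combination with the second group.

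For the second group I use the crucial identity $\pi\vec{u}=-\nabla(D\log f+\phi)$ from \eqref{eq:1.velocity}. Differentiating in time gives
\begin{equation*}
\pi\vec{u}_t = -\pi_t\vec{u}-\nabla\!\left(\frac{D\,\partial_t f}{f}\right),
\end{equation*}
and substituting $\partial_t f=-\Div(f\vec{u})$ expresses $\vec{u}_t$ entirely in spatial terms involving $\vec{u}$, $\nabla\vec{u}$, $\nabla D$, $\nabla\pi$, $\pi_t$, and $\log f$. Plugging this into $-2\int_\Omega \pi\vec{u}\cdot\vec{u}_t f\,dx$ and integrating by parts produces the Hessian term $2\int_\Omega(\nabla^2\phi\,\vec{u}\cdot\vec{u})f\,dx$ (from the $\nabla\phi$ part of the potential), the positive dissipation-like term $2\int_\Omega D|\nabla\vec{u}|^2 f\,dx$ (from $\nabla(D\nabla\log f)$ via the identity $\nabla\log f=-\pi\vec{u}/D-\nabla\phi/D$ and repeated integration by parts), and the remaining mixed terms carrying factors of $\log f$, $\nabla D$ and $\nabla\pi$.

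The main mechanism for producing the $\log f$ terms is the algebraic rewriting
\begin{equation*}
\nabla\log f = -\frac{1}{D}\bigl(\pi\vec{u}+\nabla\phi\bigr)-\frac{\log f}{D}\nabla D,
\end{equation*}
derived by taking the gradient of the identity $D\log f+\phi=-\!\int \pi\vec{u}$ implicitly encoded by \eqref{eq:1.velocity}. Substituting this wherever $\nabla\log f$ appears in the integrations by parts accounts for all the terms with coefficients $\frac{1}{D(x)}$ in \eqref{eq:4.Second_Derivative_Energy_Law}, including $\frac{\pi}{D}|\vec{u}|^2\log f(\vec{u}\cdot\nabla D)$, $\frac{(\log f)^2}{D}(\vec{u}\cdot\nabla D)^2$, and $\frac{\log f}{D}(\vec{u}\cdot\nabla D)(\vec{u}\cdot\nabla\phi)$.

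The main obstacle is not any single estimate but the careful bookkeeping: one must systematically collect and cancel the numerous cross terms generated when $\vec{u}_t$ is expanded and the factors $\pi,\pi_t,\nabla\pi, D,\nabla D,\log f,\nabla\phi,\nabla^2\phi$ are distributed through multiple integrations by parts. Since the identity is algebraic in nature and the result is already recorded as \cite[Proposition 4.15]{MR4506846}, the verification reduces to following the same organization of terms as in that reference, using only the continuity equation, the defining relation \eqref{eq:1.velocity}, and periodic integration by parts.
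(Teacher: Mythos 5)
The paper does not prove this lemma; it cites \cite[Proposition 4.15]{MR4506846}, so there is no in-text proof to compare against. Your overall strategy — differentiate $\frac{d}{dt}F=-\int_\Omega\pi|\vec{u}|^2 f\,dx$ under the integral, replace $\partial_t f$ with $-\Div(f\vec{u})$, expand $\pi\vec{u}_t=-\pi_t\vec{u}-\nabla(D\,\partial_t f/f)$, eliminate $\nabla\log f$ via the algebraic relation coming from \eqref{eq:1.velocity}, and integrate by parts periodically — is the natural one and is exactly the route taken in the cited reference; with enough care it will close.

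Two points in your write-up, however, are wrong as stated and need fixing. First, you claim that the term $-\int_\Omega\pi_t|\vec{u}|^2 f\,dx$ produced by the product rule is ``immediately a term on the right-hand side,'' but \eqref{eq:4.Second_Derivative_Energy_Law} contains $+\int_\Omega\pi_t|\vec{u}|^2 f\,dx$ with the opposite sign. The sign only comes out right after you combine with the $-\pi_t\vec{u}$ piece inside $\pi\vec{u}_t$: that piece contributes $+2\int_\Omega\pi_t|\vec{u}|^2 f\,dx$ to $-2\int_\Omega\pi\,\vec{u}\cdot\vec{u}_t\,f\,dx$, and $-1+2=+1$. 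As written, your bookkeeping would double-count or mis-sign the $\pi_t$ term. Second, the justification ``taking the gradient of the identity $D\log f+\phi=-\int\pi\vec{u}$'' is not a meaningful statement; the identity you actually want follows directly from the product rule applied to the velocity definition,
\begin{equation}
-\pi\vec{u}=\nabla\bigl(D\log f+\phi\bigr)=\log f\,\nabla D+D\,\nabla\log f+\nabla\phi,
\end{equation}
which you then solve for $\nabla\log f$. Finally, note that your proposal remains a sketch: the assertion that the ``numerous cross terms'' cancel to produce precisely the thirteen integrals of \eqref{eq:4.Second_Derivative_Energy_Law} is the entire content of the lemma and is deferred to the reference rather than verified.
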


We will first derive the following inequality from the second derivative of
the free energy above \eqref{eq:4.Second_Derivative_Energy_Law}.

\begin{lemma}
 Let $f$ be a solution of \eqref{eq:4.FokkerPlanck}, and let $\vec{u}$ be given as
 in \eqref{eq:4.FokkerPlanck}. Then, 
 we have
 \begin{equation}
  \label{eq:4.Second_Derivative_Free_Energy.1}
  \begin{split}
   &
   \frac{d^2}{dt^2}F[f](t)
    \\
   &\geq
   2\int_\Omega
   \biggl(
   (\nabla^2\phi(x)\vec{u}\cdot\vec{u})
   +
   \frac{1}{2}\pi_t(x,t)|\vec{u}|^2
   \\
   &\qquad
   -
   \Bigl(
   \frac{1}{2}
   +
   \frac{2(\Cr{const:log_f}+1)\Cr{const:grad_D}\Cr{const:grad_Pi}}{\Cr{const:Pi_Min}}
   +
   \frac{\Cr{const:log_f}\Cr{const:grad_D}}{\Cr{const:D_Min}}
   \|\nabla \phi\|_{L^\infty(\Omega)}
   \Bigr) |\vec{u}|^2
   \biggr) f\,dx
   \\
   &\quad
   +
   2\int_\Omega
   \biggl(
   1
   -
   \frac{2(1+n)\Cr{const:grad_D}^2(\Cr{const:log_f}+1)^2}{\Cr{const:D_Min}}
   -
   \frac{4D(x)\Cr{const:grad_Pi}^2}{\Cr{const:Pi_Min}^2}
   \biggr)
   D(x)|\nabla \vec{u}|^2 f\,dx
   \\
   &\quad
   +
   2
   \int_\Omega
   \frac{\pi(x,t)}{D(x)}|\vec{u}|^2
   \log f\left(\vec{u}\cdot\nabla D(x)\right) f\,dx
    \\
   &\quad
   +
   \int_\Omega
   |\vec{u}|^2\vec{u}\cdot\nabla \pi(x,t) f
   \,dx,
  \end{split}
 \end{equation}
 where $\Cr{const:D_Min}$ is the lower bound of $D(x)$
 defined in \eqref{eq:1.D_Min}, $\Cr{const:Pi_Min}$ is the lower bound
 of $\pi(x,t)$ defined in \eqref{eq:1.PiMinMax}, $\Cr{const:grad_Pi}$ is
 the upper bound for the estimate of the gradient of $\pi(x,t)$ defined
 in \eqref{eq:1.grad_Pi}, $\Cr{const:grad_D}$ is the upper bound for
 the estimate of the gradient of $D(x)$ defined in \eqref{eq:1.grad_D},
 and $\Cr{const:log_f}$ is the bound for the estimate of $\log f$
 defined in \eqref{eq:1.bounds_of_log_f}.
\end{lemma}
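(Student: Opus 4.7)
The plan is to start from the 13-term identity \eqref{eq:4.Second_Derivative_Energy_Law} and sort its summands into three classes. The structurally useful summands (the Hessian-of-$\phi$ term, the bulk $D(x)|\nabla\vec{u}|^2 f$ term, the $\pi_t|\vec{u}|^2 f$ term, and the two cubic integrals involving $\vec{u}\cdot\nabla D$ and $\vec{u}\cdot\nabla\pi$) appear verbatim on the right-hand side of the claim. The summand $2\int\frac{1}{D(x)}(\log f)^2(\vec{u}\cdot\nabla D)^2 f\,dx$ is pointwise non-negative and is simply dropped in forming the lower bound. The remaining seven integrals (terms 3, 4, 7, 10, 11, 12, 13) are sign-indefinite and must be absorbed into the two reservoirs $\int|\vec{u}|^2 f\,dx$ and $\int D(x)|\nabla\vec{u}|^2 f\,dx$ via Young's inequality $2ab\leq\varepsilon a^2+b^2/\varepsilon$ with the uniform choice $\varepsilon=\tfrac{1}{4}$, together with the a priori bounds \eqref{eq:1.D_Min}--\eqref{eq:1.grad_D} and \eqref{eq:1.bounds_of_log_f}, and the algebraic facts $|\nabla|\vec{u}|^2|\leq 2|\vec{u}||\nabla\vec{u}|$ and $|\Div\vec{u}|^2\leq n|\nabla\vec{u}|^2$ from \eqref{eq:1.div-grad}.

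Terms 3, 4, 7 are the indefinite terms already handled in the Section \ref{sec:3} proof of \eqref{eq:3.Second_Derivative_Free_Energy.1}; using the conversion $|\nabla D|^2\leq\Cr{const:grad_D}^2\cdot D(x)/\Cr{const:D_Min}$ they contribute together $\tfrac{1}{2}|\vec{u}|^2 f+\frac{4(1+n)\Cr{const:grad_D}^2(\Cr{const:log_f}+1)^2}{\Cr{const:D_Min}}D(x)|\nabla\vec{u}|^2 f+\frac{2\Cr{const:log_f}\Cr{const:grad_D}}{\Cr{const:D_Min}}\|\nabla\phi\|_{L^\infty(\Omega)}|\vec{u}|^2 f$. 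For the new $\nabla\pi$-dependent terms 10 and 11, which already carry a $|\vec{u}|^2$ factor, direct use of $|\log f-1|\leq\Cr{const:log_f}+1$, $|\nabla\pi|\leq\Cr{const:grad_Pi}$, $\pi\geq\Cr{const:Pi_Min}$, and $|\nabla D|\leq\Cr{const:grad_D}$ yields $\frac{2(\Cr{const:log_f}+1)\Cr{const:grad_D}\Cr{const:grad_Pi}}{\Cr{const:Pi_Min}}|\vec{u}|^2 f$ each. Terms 12 and 13 have the same structural form as 3 and 4 but with the weight $\nabla D$ replaced by $D(x)\nabla\pi/\pi(x,t)$; Young's inequality with $\varepsilon=\tfrac{1}{4}$, followed by $D^2(x)|\nabla\pi|^2/\pi^2(x,t)\leq (\Cr{const:grad_Pi}^2/\Cr{const:Pi_Min}^2)\cdot D(x)\cdot D(x)$, gives together $\tfrac{1}{2}|\vec{u}|^2 f+\frac{8D(x)\Cr{const:grad_Pi}^2}{\Cr{const:Pi_Min}^2}D(x)|\nabla\vec{u}|^2 f$.

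Summing the seven contributions produces a $|\vec{u}|^2 f$ coefficient of $1+\frac{4(\Cr{const:log_f}+1)\Cr{const:grad_D}\Cr{const:grad_Pi}}{\Cr{const:Pi_Min}}+\frac{2\Cr{const:log_f}\Cr{const:grad_D}\|\nabla\phi\|_{L^\infty(\Omega)}}{\Cr{const:D_Min}}$ and a $D(x)|\nabla\vec{u}|^2 f$ coefficient of $\frac{4(1+n)\Cr{const:grad_D}^2(\Cr{const:log_f}+1)^2}{\Cr{const:D_Min}}+\frac{8D(x)\Cr{const:grad_Pi}^2}{\Cr{const:Pi_Min}^2}$; after factoring the overall $2$ out of the two displayed integrals on the right-hand side of \eqref{eq:4.Second_Derivative_Free_Energy.1}, these are exactly the stated parenthetical coefficients. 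The main obstacle is not any single estimate but the bookkeeping: the squared weight $D^2(x)|\nabla\pi|^2/\pi^2(x,t)$ arising in terms 12 and 13 must be split as $D(x)|\nabla\pi|^2/\pi^2(x,t)$ times $D(x)$ so that one factor of $D(x)$ remains explicit in the coefficient while the other serves as the multiplier of $|\nabla\vec{u}|^2$ inside the $D(x)|\nabla\vec{u}|^2 f$ reservoir, and the uniform choice $\varepsilon=\tfrac{1}{4}$ is precisely what makes all seven estimates close onto the two claimed reservoir coefficients without waste.
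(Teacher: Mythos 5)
Your proposal is correct and follows essentially the same line of argument as the paper: starting from the thirteen-term identity \eqref{eq:4.Second_Derivative_Energy_Law}, keeping terms 1, 2, 5, 8, 9 verbatim, dropping the non-negative term 6, and absorbing terms 3, 4, 7, 10, 11, 12, 13 into the two reservoirs $\int|\vec{u}|^2 f\,dx$ and $\int D(x)|\nabla\vec{u}|^2 f\,dx$ via Young's inequality, with the coefficients closing exactly as you tabulate. One small inaccuracy of phrasing: terms 3, 4, 7 are not literally ``already handled'' by the Section~\ref{sec:3} estimates, since there the Young split is coarser (producing $\tfrac12|\vec{u}|^2 f$ and $2(\ldots)D|\nabla\vec{u}|^2 f$ per term rather than $\tfrac14$ and $4(\ldots)$), and the tighter $\tfrac14$ split is needed here precisely because terms 12, 13 must also deposit $\tfrac14|\vec{u}|^2 f$ each into the same reservoir; your actual numbers, however, use the correct split, so the computation is right.
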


\begin{proof}
We start with  the estimates of the integrands for the 3rd, 4th, and 7th
 integrals of \eqref{eq:4.Second_Derivative_Energy_Law}.  
 By Cauchy's inequality with $\varepsilon=1/8$ (See
 \cite[p.662]{MR1625845}) and the definition of $\Cr{const:D_Min}$, we
 will get
 \begin{equation}
  \begin{split}
   |
   (\log f-1)\nabla |\vec{u}|^2\cdot\nabla D(x) f
   |
   &\leq
   4
   (|\log f|+1)^2
   |\nabla \vec{u}|^2
   |\nabla D(x)|^2
   f
   +
   \frac{1}{4}
   |\vec{u}|^2 f
   \\
   &\leq
   \frac{4\Cr{const:grad_D}^2(\Cr{const:log_f}+1)^2}{\Cr{const:D_Min}}
   D(x)
   |\nabla \vec{u}|^2
   f
   +
   \frac{1}{4}
   |\vec{u}|^2 f,
  \end{split}
 \end{equation}
 \begin{equation}
  \begin{split}
   |2(1+\log f)\vec{u}\cdot\nabla D(x)\Div\vec{u}f|
   &\leq
   4n
   (|\log f|+1)^2
   |\nabla \vec{u}|^2
   |\nabla D(x)|^2 f
   +
   \frac{1}{4}
   |\vec{u}|^2 f
   \\
   &\leq
   \frac{4n\Cr{const:grad_D}^2(\Cr{const:log_f}+1)^2}{\Cr{const:D_Min}}
   D(x)
   |\nabla \vec{u}|^2
   f
   +
   \frac{1}{4}
   |\vec{u}|^2 f,
  \end{split}
 \end{equation}
 and
 \begin{equation}
  \left|
   \frac{2}{D(x)}
   \log f\left(\vec{u}\cdot\nabla D(x)\right)\left(\vec{u}\cdot\nabla \phi(x)\right) f
  \right|
  \leq
  \frac{2\Cr{const:log_f}\Cr{const:grad_D}}{\Cr{const:D_Min}}
  \|\nabla \phi\|_{L^\infty(\Omega)}
  |\vec{u}|^2f.
 \end{equation}
Next we  estimate the 10th, 11st, 12nd and 13rd terms of the
 right-hand side of \eqref{eq:4.Second_Derivative_Energy_Law}. Using the
 Cauchy-Schwarz inequality that
 \begin{equation}
  \left|
   2(\log f-1)
   \frac{1}{\pi(x,t)}
   |\vec{u}|^2\nabla\pi(x,t)\cdot \nabla D(x)
   f
  \right|
  \leq
  \frac{2(\Cr{const:log_f}+1)\Cr{const:grad_D}\Cr{const:grad_Pi}}{\Cr{const:Pi_Min}}
  |\vec{u}|^2
  f,
 \end{equation}
 \begin{equation}
  \left|
   2(\log f-1)
   \frac{1}{\pi(x,t)}
   (\vec{u}\cdot\nabla\pi(x,t))(\vec{u}\cdot\nabla D(x))
   f
  \right|
  \leq
  \frac{2(\Cr{const:log_f}+1)\Cr{const:grad_D}\Cr{const:grad_Pi}}{\Cr{const:Pi_Min}}
  |\vec{u}|^2
  f,
 \end{equation}
 \begin{equation}
    \begin{split}
  \left|
   \frac{D(x)}{\pi(x,t)}((\nabla|\vec{u}|^2)\cdot\nabla\pi(x,t))f
  \right|
  &\leq
   \frac{2D(x)}{\pi(x,t)}
   |\nabla \vec{u}||\vec{u}||\nabla \pi(x,t)| f
   \\
   &\leq
   \frac{4(D(x))^2\Cr{const:grad_Pi}^2}{\Cr{const:Pi_Min}^2}
   |\nabla \vec{u}|^2 f
   +
   \frac{1}{4}|\vec{u}|^2f,
    \end{split}
 \end{equation}
 and
 \begin{equation}
    \begin{split}
  \left|
   2
   \frac{D(x)}{\pi(x,t)}((\nabla\vec{u})\vec{u}\cdot\nabla\pi(x,t))f
  \right|
  &\leq
  \frac{2D(x)}{\pi(x,t)}|\nabla \vec{u}||\vec{u}||\nabla \pi(x,t)| f
    \\
    &\leq
  \frac{4(D(x))^2\Cr{const:grad_Pi}^2}{\Cr{const:Pi_Min}^2}
  |\nabla \vec{u}|^2 f
  +
  \frac{1}{4}|\vec{u}|^2f.
    \end{split}
 \end{equation}
 Combining all inequalities above, ignoring the
 integral of $\frac{1}{D(x)}(\log f)^2(\vec{u}\cdot\nabla D(x))^2f$,
 and use them in \eqref{eq:4.Second_Derivative_Energy_Law}, we arrive
 at the reduced estimate \eqref{eq:4.Second_Derivative_Free_Energy.1}.
\end{proof}

Next we consider the terms involving cubic order of $\vec{u}$ in
\eqref{eq:4.Second_Derivative_Free_Energy.1}.

\begin{lemma}
 Let $f$ be a solution of \eqref{eq:4.FokkerPlanck}, and let $\vec{u}$ be given as
 in \eqref{eq:4.FokkerPlanck}. Assume \eqref{eq:4.Sobolev.cubic.assumption} and \eqref{eq:1.D_Min}.
 Then, we have,
 \begin{equation}
  \label{eq:4.Second_Derivative_Free_Energy.2}
   \begin{split}
    &
    2
    \left|
    \int_\Omega
    \frac{\pi(x,t)}{D(x)}|\vec{u}|^2
    \log f\left(\vec{u}\cdot\nabla D(x)\right) f\,dx
    \right|
    +
    \left|
    \int_\Omega
    |\vec{u}|^2\vec{u}\cdot\nabla \pi(x,t) f
    \,dx
    \right|
    \\
    &\leq
    \left(
    \frac{3\Cr{const:log_f}\Cr{const:Pi_Max}\Cr{const:grad_D}}{\Cr{const:D_Min}}
    +
   \frac{3\Cr{const:grad_Pi}}{2}
    \right)
\Cr{const:4.Sobolev}^{\frac{3}{2}}    \int_\Omega D(x)|\nabla\vec{u}|^2f\,dx
    \\
    &\qquad
    +
    \left(
    \frac{3\Cr{const:log_f}\Cr{const:Pi_Max}\Cr{const:grad_D}}{\Cr{const:D_Min}}
   +
    \frac{3\Cr{const:grad_Pi}}{2}
    \right)
\Cr{const:4.Sobolev}^{\frac{3}{2}}    \int_\Omega |\vec{u}|^2f\,dx
   \\
    &\qquad
    +
    \left(
    \frac{\Cr{const:log_f}\Cr{const:Pi_Max}\Cr{const:grad_D}}{2\Cr{const:D_Min}}
    +
    \frac{\Cr{const:grad_Pi}}{4}
\right)
\Cr{const:4.Sobolev}^{\frac{3}{2}}   \left(
   \int_\Omega |\vec{u}|^2f\,dx
   \right)^3,
   \end{split}
 \end{equation}
 where $\Cr{const:D_Min}$ is the lower bound of $D(x)$
 defined in \eqref{eq:1.D_Min}, $\Cr{const:Pi_Max}$ is the upper bound
 of $\pi(x,t)$ defined in \eqref{eq:1.PiMinMax}, $\Cr{const:grad_Pi}$ is
 the upper bound for the estimate of the gradient of $\pi(x,t)$ defined
 in \eqref{eq:1.grad_Pi}, $\Cr{const:grad_D}$ is the upper bound for
 the estimate of the gradient of $D(x)$ defined in \eqref{eq:1.grad_D},
 $\Cr{const:log_f}$ is the bound for the estimate of $\log f$ defined in
 \eqref{eq:1.bounds_of_log_f}, and $\Cr{const:4.Sobolev}$ is appeared in
 \eqref{eq:4.SobolevType}.
\end{lemma}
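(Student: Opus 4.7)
The target estimate reduces to a single task: controlling $\int_\Omega |\vec{u}|^3 f\,dx$, multiplied by the right prefactor. So the plan is to first \emph{isolate} the cubic quantity, then apply the interpolation inequality of Lemma~\ref{lem:4.Sobolev.cubic}.

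\textbf{Step 1: Pointwise reduction to $\int |\vec{u}|^3 f\,dx$.} For the first integral, I use the positive upper bound $\pi(x,t)\leq \Cr{const:Pi_Max}$ from \eqref{eq:1.PiMinMax}, the lower bound $D(x)\geq \Cr{const:D_Min}$ from \eqref{eq:1.D_Min}, the gradient bound $|\nabla D(x)|\leq \Cr{const:grad_D}$ from \eqref{eq:1.grad_D}, and the logarithm bound $|\log f|\leq \Cr{const:log_f}$ from \eqref{eq:1.bounds_of_log_f}, together with the Cauchy--Schwarz bound $|\vec{u}\cdot\nabla D(x)|\leq |\vec{u}|\,|\nabla D(x)|$. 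These yield
\begin{equation}
 2\left|\int_\Omega \frac{\pi(x,t)}{D(x)}|\vec{u}|^2 \log f\,(\vec{u}\cdot\nabla D(x))\,f\,dx\right|
 \leq \frac{2\Cr{const:log_f}\Cr{const:Pi_Max}\Cr{const:grad_D}}{\Cr{const:D_Min}}\int_\Omega|\vec{u}|^3 f\,dx.
\end{equation}
Similarly, for the second integral I use only $|\nabla\pi(x,t)|\leq \Cr{const:grad_Pi}$ from \eqref{eq:1.grad_Pi} to get
\begin{equation}
 \left|\int_\Omega |\vec{u}|^2\vec{u}\cdot\nabla\pi(x,t)\,f\,dx\right|
 \leq \Cr{const:grad_Pi}\int_\Omega |\vec{u}|^3 f\,dx.
\end{equation}

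\textbf{Step 2: Apply the interpolation inequality.} Adding the two bounds and applying Lemma~\ref{lem:4.Sobolev.cubic} (whose hypothesis \eqref{eq:4.Sobolev.cubic.assumption} is standing in this lemma), the right-hand side becomes
\begin{equation}
 \left(\frac{2\Cr{const:log_f}\Cr{const:Pi_Max}\Cr{const:grad_D}}{\Cr{const:D_Min}}+\Cr{const:grad_Pi}\right)
 \left[\frac{3}{2}\Cr{const:4.Sobolev}^{3/2}\!\!\int_\Omega|\nabla\vec{u}|^2 f\,dx
 +\frac{3}{2}\Cr{const:4.Sobolev}^{3/2}\!\!\int_\Omega|\vec{u}|^2 f\,dx
 +\frac{1}{4}\Cr{const:4.Sobolev}^{3/2}\!\!\left(\int_\Omega|\vec{u}|^2 f\,dx\right)^{3}\right].
\end{equation}

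\textbf{Step 3: Insert $D(x)$ into the gradient term and distribute.} Since \eqref{eq:1.D_Min} gives $D(x)\geq \Cr{const:D_Min}\geq 1$, we have $\int_\Omega |\nabla\vec{u}|^2 f\,dx\leq \int_\Omega D(x)|\nabla\vec{u}|^2 f\,dx$. Distributing the prefactor across the three terms and simplifying $\tfrac{1}{2}\cdot 2 = 1$ etc.\ yields exactly the coefficients $\tfrac{3\Cr{const:log_f}\Cr{const:Pi_Max}\Cr{const:grad_D}}{\Cr{const:D_Min}}+\tfrac{3\Cr{const:grad_Pi}}{2}$ in front of the gradient and quadratic terms, and $\tfrac{\Cr{const:log_f}\Cr{const:Pi_Max}\Cr{const:grad_D}}{2\Cr{const:D_Min}}+\tfrac{\Cr{const:grad_Pi}}{4}$ in front of the cubed term, matching \eqref{eq:4.Second_Derivative_Free_Energy.2}.

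There is no genuine obstacle here once Lemma~\ref{lem:4.Sobolev.cubic} is in hand; the only non-routine bookkeeping is verifying that the algebraic matching of coefficients goes through (in particular that using $D(x)\geq 1$ is harmless since $\Cr{const:D_Min}\geq 1$ is assumed throughout). The main structural point is that the two cubic integrals from \eqref{eq:4.Second_Derivative_Free_Energy.1} combine cleanly into a single scalar multiple of $\int|\vec{u}|^3 f\,dx$, which is precisely what the cubic Sobolev-type bound \eqref{eq:4.Sobolev.cubic} was designed to handle.
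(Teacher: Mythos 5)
Your proof is correct and takes essentially the same route as the paper: bound each integrand pointwise to reduce both terms to multiples of $\int_\Omega |\vec{u}|^3 f\,dx$, apply Lemma~\ref{lem:4.Sobolev.cubic}, then use $D(x)\geq \Cr{const:D_Min}\geq 1$ to insert $D(x)$ into the $|\nabla\vec{u}|^2$ integral. The only cosmetic difference is that you combine the two prefactors before invoking the interpolation inequality once, whereas the paper applies it to each term separately and then adds; the coefficient bookkeeping matches \eqref{eq:4.Second_Derivative_Free_Energy.2} either way.
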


\begin{proof}
 First, we compute the first term of the left hand side of
 \eqref{eq:4.Second_Derivative_Free_Energy.2}.
 Using
 \eqref{eq:4.Sobolev.cubic} and 
 \eqref{eq:1.D_Min},
 we compute
 \begin{equation}
  \label{eq:4.Second_Derivative_Free_Energy.2-1}
   \begin{split}
    &
    \left|
    2
    \int_\Omega
    \frac{\pi(x,t)}{D(x)}|\vec{u}|^2
    \log f\left(\vec{u}\cdot\nabla D(x)\right) f\,dx
    \right|
 \leq
    \frac{2\Cr{const:log_f}\Cr{const:Pi_Max}\Cr{const:grad_D}}{\Cr{const:D_Min}}
    \int_\Omega|\vec{u}|^3f\,dx
    \\
    &\leq
    \frac{3\Cr{const:log_f}\Cr{const:Pi_Max}\Cr{const:grad_D}\Cr{const:4.Sobolev}^{\frac{3}{2}}}{\Cr{const:D_Min}}
   \int_\Omega|\nabla\vec{u}|^2f\,dx
   \\
   &\qquad
    +
    \frac{3\Cr{const:log_f}\Cr{const:Pi_Max}\Cr{const:grad_D}\Cr{const:4.Sobolev}^{\frac{3}{2}}}{\Cr{const:D_Min}}
    \int_\Omega|\vec{u}|^2f\,dx
   +
    \frac{\Cr{const:log_f}\Cr{const:Pi_Max}\Cr{const:grad_D}\Cr{const:4.Sobolev}^{\frac{3}{2}}}{2\Cr{const:D_Min}}
   \left(
   \int_\Omega |\vec{u}|^2f\,dx
   \right)^3
    \\
    &\leq
    \frac{3\Cr{const:log_f}\Cr{const:Pi_Max}\Cr{const:grad_D}\Cr{const:4.Sobolev}^{\frac{3}{2}}}{\Cr{const:D_Min}}
    \int_\Omega D(x)|\nabla\vec{u}|^2f\,dx
   \\
   &\qquad
    +
    \frac{3\Cr{const:log_f}\Cr{const:Pi_Max}\Cr{const:grad_D}\Cr{const:4.Sobolev}^{\frac{3}{2}}}{\Cr{const:D_Min}}
    \int_\Omega |\vec{u}|^2f\,dx
   +
    \frac{\Cr{const:log_f}\Cr{const:Pi_Max}\Cr{const:grad_D}\Cr{const:4.Sobolev}^{\frac{3}{2}}}{2\Cr{const:D_Min}}
   \left(
   \int_\Omega |\vec{u}|^2f\,dx
   \right)^3.
   \end{split}
 \end{equation}
 Next, we consider the second term of the left hand side of
 \eqref{eq:4.Second_Derivative_Free_Energy.2}.  Using
 \eqref{eq:4.Sobolev.cubic} and 
 \eqref{eq:1.D_Min} again that,
 \begin{equation}
  \label{eq:4.Second_Derivative_Free_Energy.2-2}
  \begin{split}
   &
   \left|
   \int_\Omega
   |\vec{u}|^2\vec{u}\cdot\nabla\pi(x,t)
   f\,dx
   \right|
   \leq
   \Cr{const:grad_Pi}
   \int_\Omega|\vec{u}|^3f\,dx
   \\
   &\leq
   \frac{3\Cr{const:grad_Pi}\Cr{const:4.Sobolev}^{\frac{3}{2}}}{2}
   \int_\Omega|\nabla \vec{u}|^2f\,dx
   +
   \frac{3\Cr{const:grad_Pi}\Cr{const:4.Sobolev}^{\frac{3}{2}}}{2}
   \int_\Omega|\vec{u}|^2f\,dx
   +
   \frac{\Cr{const:grad_Pi}\Cr{const:4.Sobolev}^{\frac{3}{2}}}{4}
   \left(
   \int_\Omega|\vec{u}|^2f\,dx
   \right)^3
   \\
   &\leq
   \frac{3\Cr{const:grad_Pi}\Cr{const:4.Sobolev}^{\frac{3}{2}}}{2}
   \int_\Omega D(x)|\nabla \vec{u}|^2f\,dx
   +
   \frac{3\Cr{const:grad_Pi}\Cr{const:4.Sobolev}^{\frac{3}{2}}}{2}
   \int_\Omega|\vec{u}|^2f\,dx
   \\
   &\qquad
   +
   \frac{\Cr{const:grad_Pi}\Cr{const:4.Sobolev}^{\frac{3}{2}}}{4}
   \left(
   \int_\Omega|\vec{u}|^2f\,dx
   \right)^3.
  \end{split}
 \end{equation}
 Combining \eqref{eq:4.Second_Derivative_Free_Energy.2-1} and
 \eqref{eq:4.Second_Derivative_Free_Energy.2-2}, we obtain
 \eqref{eq:4.Second_Derivative_Free_Energy.2}.
\end{proof}

Now we are ready to demonstrate that under the assumptions 
on suitable sufficient ``smallness'' conditions for the terms involving  
the gradient of the mobility $|\nabla\pi|$, and $|\pi_t|$,  in terms of $\Cr{const:grad_Pi}$ and $\Cr{const:Pi_Time}$,
together with the assumption of $\Cr{const:D_Min}$ being sufficiently
large (the lower bound of the diffusion), 
one can deduce from \eqref{eq:4.Second_Derivative_Energy_Law} the following proposition.

\begin{proposition}
 \label{prop:4.Second_Derivative_Free_Energy} 
 Let $f$ be a smooth classical solution of
 \eqref{eq:4.FokkerPlanck}, and let $\vec{u}$ be given as in \eqref{eq:4.FokkerPlanck}. Assume
 the lower bound of the diffusion $D(x)$, $\Cr{const:D_Min}\geq1$, is
 large enough and the upper bounds of $|\pi_t|$ and $|\nabla\pi|$, in
 terms of $\Cr{const:Pi_Time}\leq\frac{1}{6}$ and $\Cr{const:grad_Pi}$, are small
 enough such that the following conditions hold:
 \begin{equation}
  \label{eq:4.Second_Derivative_Free_Energy.3.AssumptionA}
  \begin{split}
   \max
   \biggl\{
  & 12\Cr{const:log_f}\Cr{const:Pi_Max}\Cr{const:grad_D}\Cr{const:4.Sobolev}^{\frac{3}{2}},
   12\Cr{const:log_f}\Cr{const:grad_D}\|\nabla\phi\|_{L^\infty(\Omega)},
   \\
 &  16(1+n)(\Cr{const:log_f}+1)^2\Cr{const:grad_D}^2
   \biggr\}
   \leq
   \Cr{const:D_Min},
   \end{split}
 \end{equation}
 and
 \begin{equation}
  \label{eq:4.Second_Derivative_Free_Energy.3.AssumptionC}
   \Cr{const:grad_Pi}
   \leq
   \min
   \left\{
   \frac{1}{6\Cr{const:4.Sobolev}^{\frac{3}{2}}},
   \frac{\Cr{const:Pi_Min}}{24(\Cr{const:log_f}+1)\Cr{const:grad_D}},
   \frac{\Cr{const:Pi_Min}}{4\sqrt{2(\sqrt{n}\Cr{const:grad_D}+1)\Cr{const:D_Min}}
   },
   \Cr{const:Pi_Min}
   \right\}.
 \end{equation}
 Here, $\Cr{const:D_Min}$ is the lower bound of $D(x)$
 defined in \eqref{eq:1.D_Min}, $\Cr{const:Pi_Min}$ and
 $\Cr{const:Pi_Max}$ are the lower and the upper bounds of $\pi(x,t)$
 defined in \eqref{eq:1.PiMinMax} respectively. The bound $\Cr{const:Pi_Time}$ is
 the bound for the estimate of the time derivative of $\pi$ defined in
 \eqref{eq:1.Pi_Time}, $\Cr{const:grad_Pi}$ is the upper bound for the
 estimate of the gradient of $\pi(x,t)$ defined in \eqref{eq:1.grad_Pi},
 $\Cr{const:grad_D}$ is the upper bound for the estimate of the gradient
 of $D(x)$ defined in \eqref{eq:1.grad_D}, $\Cr{const:log_f}$ is the
 bound for the estimate of $\log f$ defined in
 \eqref{eq:1.bounds_of_log_f}, and $\Cr{const:4.Sobolev}$ is appeared in
 \eqref{eq:4.SobolevType}.
 Then, we obtain,
 \begin{equation}
  \label{eq:4.Second_Derivative_Free_Energy.3}
  \begin{split}
   \frac{d^2}{dt^2}F[f](t)
&   \geq
   -2(\lambda+1)
   \int_\Omega
   |\vec{u}|^2
   f
   \,dx
   +
   \int_\Omega
   D(x)|\nabla \vec{u}|^2 f\,dx
   \\
   &\quad
   -\frac{1}{12}
   \left(
   \int_\Omega
   |\vec{u}|^2
   f
   \,dx
   \right)^3.
   \end{split}
 \end{equation}
\end{proposition}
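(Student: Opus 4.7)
The plan is to follow the strategy of the proof of Proposition~\ref{prop:3.Second_Derivative_Free_Energy}, adapted to account for the additional terms that stem from the variable mobility $\pi(x,t)$. The starting point is the lower bound \eqref{eq:4.Second_Derivative_Free_Energy.1}, into which one substitutes the cubic-term estimate \eqref{eq:4.Second_Derivative_Free_Energy.2} to absorb the cubic contributions. After this substitution, the right-hand side takes the structural form
\begin{equation*}
\frac{d^2}{dt^2}F[f](t) \geq A\int_\Omega |\vec{u}|^2 f\,dx + B\int_\Omega D(x)|\nabla\vec{u}|^2 f\,dx - C\left(\int_\Omega |\vec{u}|^2 f\,dx\right)^3,
\end{equation*}
and the task reduces to verifying $A \geq -2(\lambda+1)$, $B \geq 1$, and $C \leq \frac{1}{12}$.

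The key preparatory step is to rewrite the quantitative hypotheses \eqref{eq:4.Second_Derivative_Free_Energy.3.AssumptionA} and \eqref{eq:4.Second_Derivative_Free_Energy.3.AssumptionC} in the equivalent fraction forms
$\frac{\Cr{const:log_f}\Cr{const:Pi_Max}\Cr{const:grad_D}\Cr{const:4.Sobolev}^{3/2}}{\Cr{const:D_Min}} \leq \frac{1}{12}$,
$\frac{\Cr{const:log_f}\Cr{const:grad_D}\|\nabla\phi\|_{L^\infty(\Omega)}}{\Cr{const:D_Min}} \leq \frac{1}{12}$,
$\frac{(1+n)(\Cr{const:log_f}+1)^2\Cr{const:grad_D}^2}{\Cr{const:D_Min}} \leq \frac{1}{16}$,
$\Cr{const:grad_Pi}\Cr{const:4.Sobolev}^{3/2} \leq \frac{1}{6}$,
$\frac{(\Cr{const:log_f}+1)\Cr{const:grad_D}\Cr{const:grad_Pi}}{\Cr{const:Pi_Min}} \leq \frac{1}{24}$,
and $\frac{D(x)\Cr{const:grad_Pi}^2}{\Cr{const:Pi_Min}^2} \leq \frac{1}{32}$, the last of which follows from the global upper bound $D(x)\leq \Cr{const:D_Min}+\sqrt{n}\Cr{const:grad_D}$ in \eqref{eq:1.D_Max} combined with the third item of \eqref{eq:4.Second_Derivative_Free_Energy.3.AssumptionC}. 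Together with the standing hypothesis $\Cr{const:Pi_Time}\leq \frac{1}{6}$, these fractions supply exactly the numerical slack needed to close each of the three coefficient inequalities.

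With this dictionary in hand, the bound $C\leq \frac{1}{12}$ reduces to $\frac{1}{2}\cdot\frac{1}{12} + \frac{1}{4}\cdot\frac{1}{6} = \frac{1}{12}$. For $B$, combining the prefactor $2(1-\text{small})$ on $D(x)|\nabla\vec{u}|^2 f$ with the subtraction $\bigl(\frac{3\Cr{const:log_f}\Cr{const:Pi_Max}\Cr{const:grad_D}}{\Cr{const:D_Min}}+\frac{3\Cr{const:grad_Pi}}{2}\bigr)\Cr{const:4.Sobolev}^{3/2}$ from \eqref{eq:4.Second_Derivative_Free_Energy.2} yields $B \geq 2\bigl(1-\frac{1}{8}-\frac{1}{8}\bigr) - \bigl(\frac{1}{4}+\frac{1}{4}\bigr) = 1$, in parallel with \eqref{eq:3.Second_Derivative_Free_Energy.3.5}. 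For $A$, one applies $\nabla^2\phi \geq -\lambda I$ and $|\pi_t|\leq \Cr{const:Pi_Time}\leq \frac{1}{6}$, then substitutes the remaining fractions to obtain $A \geq -2\lambda - 2\bigl(\frac{1}{12}+\frac{1}{2}+\frac{1}{12}+\frac{1}{12}\bigr) - \bigl(\frac{1}{4}+\frac{1}{4}\bigr) = -2(\lambda+1)$.

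The main obstacle is the coupling between $D(x)|\nabla\vec{u}|^2 f$ and the mobility-gradient perturbation $4D(x)\Cr{const:grad_Pi}^2/\Cr{const:Pi_Min}^2$ in \eqref{eq:4.Second_Derivative_Free_Energy.1}: because $D(x)$ itself scales with $\Cr{const:D_Min}$, this perturbation cannot be suppressed by merely enlarging $\Cr{const:D_Min}$, in contrast to Section~\ref{sec:3}. One must instead force $\Cr{const:grad_Pi}\lesssim 1/\sqrt{\Cr{const:D_Min}}$, which is precisely the role played by the third item of \eqref{eq:4.Second_Derivative_Free_Energy.3.AssumptionC}. Once this delicate coupling is correctly imposed, all remaining coefficient checks reduce to the routine arithmetic outlined above.
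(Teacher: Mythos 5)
Your proposal is correct and follows essentially the same route as the paper's proof: combine \eqref{eq:4.Second_Derivative_Free_Energy.1} with \eqref{eq:4.Second_Derivative_Free_Energy.2}, then use the hypotheses to bound the three coefficients, and in particular use \eqref{eq:1.D_Max} with $\Cr{const:D_Min}\geq1$ to turn the third item of \eqref{eq:4.Second_Derivative_Free_Energy.3.AssumptionC} into the bound $4D(x)\Cr{const:grad_Pi}^2/\Cr{const:Pi_Min}^2\leq 1/8$. The only cosmetic difference is that you keep each fraction bound as a separate inequality, whereas the paper groups several of them into composite intermediate inequalities (its Assumptions 1, 3, 4, 5); the arithmetic in both cases is the same.
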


\begin{remark}
 We emphasize that all of the constants in the left-hand side of
 \eqref{eq:4.Second_Derivative_Free_Energy.3.AssumptionA} are independent
 of $\Cr{const:D_Min}$ and $\Cr{const:grad_Pi}$, and every constant in
 the right-hand side of
 \eqref{eq:4.Second_Derivative_Free_Energy.3.AssumptionC} except for
 $\Cr{const:D_Min}$ is independent of $\Cr{const:grad_Pi}$. Thus, in
 order to guarantee
 \eqref{eq:4.Second_Derivative_Free_Energy.3.AssumptionA} and
 \eqref{eq:4.Second_Derivative_Free_Energy.3.AssumptionC}, first we fix
 sufficiently small $\Cr{const:D_Min}$ so 
 \eqref{eq:4.Second_Derivative_Free_Energy.3.AssumptionA} holds. Next, for the
 fixed $\Cr{const:D_Min}$, we take sufficiently large
 $\Cr{const:grad_Pi}$ to ensure the inequality
 \eqref{eq:4.Second_Derivative_Free_Energy.3.AssumptionC}.
\end{remark}

\begin{proof}
 Note that the conditions 
 \eqref{eq:4.Second_Derivative_Free_Energy.3.AssumptionA},
 $\Cr{const:Pi_Time}\leq\frac{1}{6}$
 , and
 \eqref{eq:4.Second_Derivative_Free_Energy.3.AssumptionC}
 yield the following inequalities:
 \begin{equation}
  \label{eq:4.Second_Derivative_Free_Energy.3.Assumption1}
   \left(
    \frac{3\Cr{const:log_f}\Cr{const:Pi_Max}\Cr{const:grad_D}}{\Cr{const:D_Min}}
    +
    \frac{3\Cr{const:grad_Pi}}{2}
   \right)
   \Cr{const:4.Sobolev}^{\frac{3}{2}}
   \leq
   \frac{1}{2},
 \end{equation}
 \begin{equation}
  \label{eq:4.Second_Derivative_Free_Energy.3.Assumption3}
   \frac{1}{2}\Cr{const:Pi_Time}
    +
    \frac{2(\Cr{const:log_f}+1)\Cr{const:grad_Pi}\Cr{const:grad_D}}{\Cr{const:Pi_Min}}
    +
    \frac{\Cr{const:log_f}\Cr{const:grad_D}}{\Cr{const:D_Min}}
    \|\nabla\phi\|_{L^\infty(\Omega)}
   \leq
   \frac{1}{4},
 \end{equation}
 \begin{equation}
  \label{eq:4.Second_Derivative_Free_Energy.3.Assumption4}
   (\Cr{const:log_f}+1)^2
   \Cr{const:grad_D}^2
   \leq
   \frac{1}{16(1+n)}\Cr{const:D_Min},
 \end{equation}
 \begin{equation}
  \label{eq:4.Second_Derivative_Free_Energy.3.Assumption5}
   (\sqrt{n}\Cr{const:grad_D}+1)
   \Cr{const:D_Min}
   \Cr{const:grad_Pi}^2
   \leq
   \frac{1}{32}\Cr{const:Pi_Min}^2,
 \end{equation}
 and
$  \Cr{const:grad_Pi}
   \leq
   \Cr{const:Pi_Min}.$

 First, combining \eqref{eq:4.Second_Derivative_Free_Energy.1} and
 \eqref{eq:4.Second_Derivative_Free_Energy.2}, we obtain
 \begin{equation}
  \label{eq:4.Second_Derivative_Free_Energy.3.1}
  \begin{split}
   \frac{d^2}{dt^2}F[f](t)
   &
   \geq
   2\int_\Omega
   \biggl(
   (\nabla^2\phi(x)\vec{u}\cdot\vec{u})
   +
   \frac{1}{2}\pi_t(x,t)|\vec{u}|^2
   \\
   &\quad
   -
   \Bigl(
   \frac{1}{2}
   +
   \frac{2(\Cr{const:log_f}+1)\Cr{const:grad_D}\Cr{const:grad_Pi}}{\Cr{const:Pi_Min}}
   +
   \frac{\Cr{const:log_f}\Cr{const:grad_D}}{\Cr{const:D_Min}}
   \|\nabla \phi\|_{L^\infty(\Omega)}
   \Bigr) |\vec{u}|^2
   \biggr) f\,dx
   \\
   &\quad
   +
   2\int_\Omega
   \biggl(
   1
   -
   \frac{2(1+n)\Cr{const:grad_D}^2(\Cr{const:log_f}+1)^2}{\Cr{const:D_Min}}
   \\
    &\qquad\qquad\qquad
   -
   \frac{4D(x)\Cr{const:grad_Pi}^2}{\Cr{const:Pi_Min}^2}
   \biggr)
   D(x)|\nabla \vec{u}|^2 f\,dx
   \\
   &\quad
   -
   \left(
   \frac{3\Cr{const:log_f}\Cr{const:Pi_Max}\Cr{const:grad_D}}{\Cr{const:D_Min}}
   +
   \frac{3\Cr{const:grad_Pi}}{2}
   \right)
   \Cr{const:4.Sobolev}^{\frac{3}{2}}
   \int_\Omega D(x)|\nabla\vec{u}|^2f\,dx
   \\
   &\quad
   -
   \left(
   \frac{3\Cr{const:log_f}\Cr{const:Pi_Max}\Cr{const:grad_D}}{\Cr{const:D_Min}}
   +
   \frac{3\Cr{const:grad_Pi}}{2}
   \right)
   \Cr{const:4.Sobolev}^{\frac{3}{2}}
   \int_\Omega |\vec{u}|^2f\,dx
   \\
   &\quad
   -
   \left(
   \frac{\Cr{const:log_f}\Cr{const:Pi_Max}\Cr{const:grad_D}}{2\Cr{const:D_Min}}
   +
   \frac{\Cr{const:grad_Pi}}{4}
   \right)
   \Cr{const:4.Sobolev}^{\frac{3}{2}}
   \left(
   \int_\Omega |\vec{u}|^2f\,dx
   \right)^3.
  \end{split}
 \end{equation}

 We consider the integral of $|\vec{u}|^2f$ in
 \eqref{eq:4.Second_Derivative_Free_Energy.3.1}.  Using
 \eqref{eq:1.Hesse_Potential_Lower_Bounds} and \eqref{eq:1.Pi_Time}, we
 get
 \begin{equation}
  (\nabla^2\phi(x)\vec{u}\cdot\vec{u})
   +
   \frac{1}{2}\pi_t(x,t)|\vec{u}|^2
   \geq
   \left(
    -
    \lambda
    -
    \frac{1}{2}\Cr{const:Pi_Time}
   \right)
   |\vec{u}|^2.
 \end{equation}
 Then applying the assumptions
 \eqref{eq:4.Second_Derivative_Free_Energy.3.Assumption1} and
 \eqref{eq:4.Second_Derivative_Free_Energy.3.Assumption3}, we have
 \begin{equation}
  \label{eq:4.Second_Derivative_Free_Energy.3.2}
  \begin{split}
   &
   2\int_\Omega
   \biggl(
   (\nabla^2\phi(x)\vec{u}\cdot\vec{u})
   +
   \frac{1}{2}\pi_t(x,t)|\vec{u}|^2
   \\
   &\qquad
   -
   \Bigl(
   \frac{1}{2}
   +
   \frac{2(\Cr{const:log_f}+1)\Cr{const:grad_D}\Cr{const:grad_Pi}}{\Cr{const:Pi_Min}}
   +
   \frac{\Cr{const:log_f}\Cr{const:grad_D}}{\Cr{const:D_Min}}
   \|\nabla \phi\|_{L^\infty(\Omega)}
   \Bigr) |\vec{u}|^2
   \biggr) f\,dx
   \\
   &\qquad
   -
   \left(
   \frac{3\Cr{const:log_f}\Cr{const:Pi_Max}\Cr{const:grad_D}}{\Cr{const:D_Min}}
   +
   \frac{3\Cr{const:grad_Pi}}{2}
   \right)
   \Cr{const:4.Sobolev}^{\frac{3}{2}}
   \int_\Omega |\vec{u}|^2f\,dx
   \\
   &\geq
   -2
   \biggl(
   \lambda
   +
   \frac{1}{2}\Cr{const:Pi_Time}
   +
   \frac{1}{2}
   +
   \frac{2(\Cr{const:log_f}+1)\Cr{const:grad_D}\Cr{const:grad_Pi}}{\Cr{const:Pi_Min}}
   +
   \frac{\Cr{const:log_f}\Cr{const:grad_D}}{\Cr{const:D_Min}}
   \|\nabla \phi\|_{L^\infty(\Omega)}
   \\
   &\qquad\quad
   +
   \frac{1}{2}
   \left(
   \frac{3\Cr{const:log_f}\Cr{const:Pi_Max}\Cr{const:grad_D}}{\Cr{const:D_Min}}
   +
   \frac{3\Cr{const:grad_Pi}}{2}
   \right)
   \Cr{const:4.Sobolev}^{\frac{3}{2}}  
   \biggr)
   \int_\Omega
   |\vec{u}|^2
   f
   \,dx
   \\
   &
   \geq
   -2(\lambda+1)
   \int_\Omega
   |\vec{u}|^2
   f
   \,dx.
  \end{split}
 \end{equation}
 Next we consider the integrals in \eqref{eq:4.Second_Derivative_Free_Energy.3.1} that contain term with $D(x)|\nabla\vec{u}|^2f$. Note, using
 \eqref{eq:4.Second_Derivative_Free_Energy.3.Assumption1}, we obtain
 \begin{equation}
  \label{eq:4.Second_Derivative_Free_Energy.3.3}
   \begin{split}
    &
    2
    \biggl(
    1
    -
    \frac{2(1+n)\Cr{const:grad_D}^2(\Cr{const:log_f}+1)^2}{\Cr{const:D_Min}}
    -
    \frac{4D(x)\Cr{const:grad_Pi}^2}{\Cr{const:Pi_Min}^2}
    \biggr)
    \\
    &\qquad
    -
    \left(
    \frac{3\Cr{const:log_f}\Cr{const:Pi_Max}\Cr{const:grad_D}}{\Cr{const:D_Min}}
   +
   \frac{3\Cr{const:grad_Pi}}{2}
   \right)
    \Cr{const:4.Sobolev}^{\frac{3}{2}}
   \\
   &\geq
   2
   \biggl(
   1
   -
    \frac{2(1+n)\Cr{const:grad_D}^2(\Cr{const:log_f}+1)^2}{\Cr{const:D_Min}}
    -
    \frac{4\Cr{const:grad_Pi}^2}{\Cr{const:Pi_Min}^2}
   \|D\|_{L^\infty(\Omega)}
   \biggr)
   -\frac{1}{2}.
   \end{split}
 \end{equation}
 Here we use \eqref{eq:1.D_Max} and $\Cr{const:D_Min}\geq1$, then
 \begin{equation}
  \|D\|_{L^\infty(\Omega)}
   \leq
   \Cr{const:D_Min}
   +
   \sqrt{n}\Cr{const:grad_D}
   \leq
   \left(
    1
    +
    \sqrt{n}\Cr{const:grad_D}
   \right)
   \Cr{const:D_Min}.
 \end{equation}
 Thus, by the assumptions of
 \eqref{eq:4.Second_Derivative_Free_Energy.3.Assumption4} and
 \eqref{eq:4.Second_Derivative_Free_Energy.3.Assumption5}, we have that
 \begin{equation}
  \label{eq:4.Second_Derivative_Free_Energy.3.4}
   \frac{2(1+n)\Cr{const:grad_D}^2(\Cr{const:log_f}+1)^2}{\Cr{const:D_Min}}
   +
   \frac{4\Cr{const:grad_Pi}^2}{\Cr{const:Pi_Min}^2}
   \|D\|_{L^\infty(\Omega)}
   \leq
   \frac{1}{4}.
 \end{equation}
 Combining \eqref{eq:4.Second_Derivative_Free_Energy.3.3} and
 \eqref{eq:4.Second_Derivative_Free_Energy.3.4}, we arrive at
 \begin{equation}
  \label{eq:4.Second_Derivative_Free_Energy.3.5}
  \begin{split}
   &\quad
   2\int_\Omega
   \biggl(
   1
   -
   \frac{2(1+n)\Cr{const:grad_D}^2(\Cr{const:log_f}+1)^2}{\Cr{const:D_Min}}
   -
   \frac{4D(x)\Cr{const:grad_Pi}^2}{\Cr{const:Pi_Min}^2}
   \biggr)
   D(x)|\nabla \vec{u}|^2 f\,dx
   \\
   &\qquad
   -
   \left(
   \frac{3\Cr{const:log_f}\Cr{const:Pi_Max}\Cr{const:grad_D}}{\Cr{const:D_Min}}
   +
   \frac{3\Cr{const:grad_Pi}}{2}
   \right)
   \Cr{const:4.Sobolev}^{\frac{3}{2}}
   \int_\Omega D(x)|\nabla\vec{u}|^2f\,dx
   \\
   &\geq
   \int_\Omega
   D(x)|\nabla \vec{u}|^2 f\,dx.
  \end{split} 
 \end{equation}
 Finally, we use \eqref{eq:4.Second_Derivative_Free_Energy.3.Assumption1}
 to estimate the coefficient of the last term (the coefficient of the
 integral cubed), and we have that,
 \begin{equation}
  \label{eq:4.Second_Derivative_Free_Energy.3.6}
   \left(
   \frac{\Cr{const:log_f}\Cr{const:Pi_Max}\Cr{const:grad_D}}{2\Cr{const:D_Min}}
   +
   \frac{\Cr{const:grad_Pi}}{4}
   \right)
   \Cr{const:4.Sobolev}^{\frac{3}{2}}
   \leq
   \frac{1}{12}.
 \end{equation}
 Combining \eqref{eq:4.Second_Derivative_Free_Energy.3.2},
 \eqref{eq:4.Second_Derivative_Free_Energy.3.5}, and
 \eqref{eq:4.Second_Derivative_Free_Energy.3.6}, we obtain
 \eqref{eq:4.Second_Derivative_Free_Energy.3}.
\end{proof}

Finally, the  result in Proposition \ref{prop:4.Second_Derivative_Free_Energy} will let us to the 
desired differential inequality for the dissipation term and the proof of the main result Theorem \ref{thm:4}.

\begin{lemma}
 \label{lem:4.Differential_Inequality_Dissipation} 
 Let $f$ be a solution of \eqref{eq:4.FokkerPlanck}, and let $\vec{u}$ be given as
 in \eqref{eq:4.FokkerPlanck}.  Then for any $\gamma>0$, there exist a sufficiently
 large positive constant $\Cr{const:D_Min}\geq1$ and sufficiently small
 positive constants $\Cr{const:grad_Pi}, \Cr{const:Pi_Time}>0$ which depend
 only on $n$, $\|\phi\|_{L^\infty(\Omega)}$,
 $\|\nabla\phi\|_{L^\infty(\Omega)}$, $\lambda$ appeared in
 \eqref{eq:1.Hesse_Potential_Lower_Bounds}(the lower bound of the
 Hessian of $\phi$), $\Cr{const:InitMin}$, $\Cr{const:InitMax}$ defined
 in \eqref{eq:1.Initial}(the bounds of the initial datum $f_0$),
 $\Cr{const:Pi_Min}$, $\Cr{const:Pi_Max}$ defined in
 \eqref{eq:1.PiMinMax}(the bounds of $\pi$), $\Cr{const:grad_D}$ defined
 in \eqref{eq:1.grad_D}(the bound of the gradient of $D$),
 $\Cr{const:4.Sobolev}$ appeared in \eqref{eq:4.SobolevType}, and
 $\gamma$ such that 
 if \eqref{eq:1.D_Min}, \eqref{eq:1.Pi_Time},
 and
\eqref{eq:1.grad_Pi} hold,
 then,
 \begin{equation}
  \label{eq:4.Differential_Inequality_Dissipation}
   \frac{d^2}{dt^2}F[f](t)
   \geq
   \gamma\int_{\Omega}
   \pi(x,t)|\vec{u}|^2
   f\,dx
   -
   \frac{1}{12\Cr{const:Pi_Min}^3}
   \left(
   \int_\Omega
   \pi(x,t)
   |\vec{u}|^2
   f
   \,dx
   \right)^3.
 \end{equation}
\end{lemma}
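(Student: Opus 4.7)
The plan is to mirror the structure of the proof of Lemma~\ref{lem:3.Differential_Inequality_Dissipation} in Section~\ref{sec:3}, but with the extra bookkeeping required by the mobility $\pi(x,t)$. The starting point is the refined lower bound for the second derivative of the free energy given in Proposition~\ref{prop:4.Second_Derivative_Free_Energy}:
\begin{equation*}
   \frac{d^2}{dt^2}F[f](t)
   \geq
   -2(\lambda+1)\int_\Omega |\vec{u}|^2 f\,dx
   +
   \int_\Omega D(x)|\nabla\vec{u}|^2 f\,dx
   -
   \frac{1}{12}\left(\int_\Omega |\vec{u}|^2 f\,dx\right)^3,
\end{equation*}
which already swallows all the ``bad'' cubic and cross terms into the single cubic tail and the pure dissipation term. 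I then want to convert everything to the weighted norm $\int_\Omega \pi(x,t)|\vec{u}|^2 f\,dx$, which is what actually appears in the energy law \eqref{eq:4.EnergyLaw}.

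First I would replace the pure dissipation by the lower bound from the Poincar\'e-type inequality of Lemma~\ref{lem:4.Poincare}, valid under the smallness assumption \eqref{eq:4.Poincare_Assumption} on $\Cr{const:grad_Pi}$, together with $D(x)\geq\Cr{const:D_Min}$:
\begin{equation*}
  \int_\Omega D(x)|\nabla\vec{u}|^2 f\,dx
  \geq
  \Cr{const:D_Min}\int_\Omega |\nabla\vec{u}|^2 f\,dx
  \geq
  \frac{\Cr{const:D_Min}}{\Cr{const:4.Poincare}}\int_\Omega |\vec{u}|^2 f\,dx.
\end{equation*}
Combining this with the $-2(\lambda+1)$ term produces a coefficient $\bigl(-2(\lambda+1)+\Cr{const:D_Min}/\Cr{const:4.Poincare}\bigr)\int |\vec{u}|^2 f\,dx$. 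Using the pointwise bound $\pi(x,t)\leq\Cr{const:Pi_Max}$ to pass from $|\vec{u}|^2 f$ to $\pi|\vec{u}|^2 f$, and the pointwise bound $\pi(x,t)\geq\Cr{const:Pi_Min}$ to handle the cubic term
\begin{equation*}
 \left(\int_\Omega |\vec{u}|^2 f\,dx\right)^3
  \leq
  \frac{1}{\Cr{const:Pi_Min}^3}\left(\int_\Omega \pi(x,t)|\vec{u}|^2 f\,dx\right)^3,
\end{equation*}
the desired inequality \eqref{eq:4.Differential_Inequality_Dissipation} reduces to requiring
\begin{equation*}
  \frac{1}{\Cr{const:Pi_Max}}\left(-2(\lambda+1)+\frac{\Cr{const:D_Min}}{\Cr{const:4.Poincare}}\right)\geq \gamma.
\end{equation*}

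I would finish by choosing the parameters in a specific order. For a given $\gamma>0$, first pick $\Cr{const:Pi_Time}\leq 1/6$ and $\Cr{const:grad_Pi}$ small enough that Proposition~\ref{prop:4.Second_Derivative_Free_Energy} and Lemma~\ref{lem:4.Poincare} apply (that is, \eqref{eq:4.Poincare_Assumption}, \eqref{eq:4.Second_Derivative_Free_Energy.3.AssumptionC}, and the requisite part of \eqref{eq:4.Second_Derivative_Free_Energy.3.AssumptionA}); then enlarge $\Cr{const:D_Min}$ further so that both \eqref{eq:4.Second_Derivative_Free_Energy.3.AssumptionA} and the display above hold, namely
\begin{equation*}
  \Cr{const:D_Min}\geq \Cr{const:4.Poincare}\bigl(\gamma\,\Cr{const:Pi_Max}+2(\lambda+1)\bigr).
\end{equation*}
The only delicate point, and the one I expect to be the main obstacle, is the order in which these constants are fixed: $\Cr{const:4.Poincare}$ and $\Cr{const:4.Sobolev}$ must remain independent of $\Cr{const:D_Min}$ (guaranteed by Corollary~\ref{cor:1.bounds_of_log_f} and the remarks after Lemmas~\ref{lem:4.Sobolev}--\ref{lem:4.Poincare}, since they depend only on $n$, $\Cr{const:InitMin}$, $\Cr{const:InitMax}$, $\Cr{const:grad_D}$, $\Cr{const:Pi_Min}$, $\Cr{const:Pi_Max}$, and $\|\phi\|_{L^\infty(\Omega)}$), so that enlarging $\Cr{const:D_Min}$ at the last step does not retroactively invalidate the Poincar\'e/Sobolev estimates used to reach this point.
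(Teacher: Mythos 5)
Your argument reproduces the paper's proof almost verbatim through steps 1--3: start from Proposition~\ref{prop:4.Second_Derivative_Free_Energy}, lower-bound $\int_\Omega D(x)|\nabla\vec{u}|^2 f\,dx$ by $\frac{\Cr{const:D_Min}}{\Cr{const:4.Poincare}}\int_\Omega |\vec{u}|^2 f\,dx$ via Lemma~\ref{lem:4.Poincare}, and then absorb the $\pi$--weight into the two integrals using $\Cr{const:Pi_Min}\leq\pi\leq\Cr{const:Pi_Max}$. All of that is correct and matches the paper.

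The final step, however, has the parameter-fixing order backwards, and it is exactly the spot you flagged as delicate. You propose to first fix $\Cr{const:grad_Pi}$ ``small enough that \ldots \eqref{eq:4.Second_Derivative_Free_Energy.3.AssumptionC} holds'' and then to enlarge $\Cr{const:D_Min}$. But the third entry in the minimum in \eqref{eq:4.Second_Derivative_Free_Energy.3.AssumptionC} is
$\frac{\Cr{const:Pi_Min}}{4\sqrt{2(\sqrt{n}\Cr{const:grad_D}+1)\Cr{const:D_Min}}}$,
which \emph{decreases} as $\Cr{const:D_Min}$ grows. So enlarging $\Cr{const:D_Min}$ at the last step \emph{can} retroactively break the hypothesis of Proposition~\ref{prop:4.Second_Derivative_Free_Energy}, and you cannot even make sense of ``small enough for AssumptionC'' before $\Cr{const:D_Min}$ is fixed. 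The worry you actually addressed --- whether enlarging $\Cr{const:D_Min}$ invalidates the Sobolev/Poincar\'e constants --- is a non-issue, since $\Cr{const:4.Sobolev}$ and $\Cr{const:4.Poincare}$ are independent of $\Cr{const:D_Min}$ by construction.

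The fix is the opposite order, and it works precisely because the constraints on $\Cr{const:D_Min}$ (namely \eqref{eq:4.Second_Derivative_Free_Energy.3.AssumptionA} and $\Cr{const:D_Min}\geq\Cr{const:4.Poincare}\bigl(\gamma\Cr{const:Pi_Max}+2(\lambda+1)\bigr)$) involve only constants independent of $\Cr{const:grad_Pi}$. So: first fix $\Cr{const:D_Min}\geq1$ large enough for both of these; \emph{then} choose $\Cr{const:Pi_Time}\leq\tfrac16$ and $\Cr{const:grad_Pi}$ small enough that \eqref{eq:4.Poincare_Assumption} and \eqref{eq:4.Second_Derivative_Free_Energy.3.AssumptionC} hold, the latter now being a well-defined constraint because $\Cr{const:D_Min}$ is already pinned down. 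This is exactly the ordering described in the remark following Proposition~\ref{prop:4.Second_Derivative_Free_Energy}; with it, the rest of your argument closes without further changes.
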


\begin{proof}
 By Lemma \ref{lem:4.Poincare},
 \begin{equation}
   \int_\Omega|\vec{u}|^2f\,dx
   \leq
   \Cr{const:4.Poincare}
   \int_\Omega
   |\nabla\vec{u}|^{2}
   f\,dx,
 \end{equation} 
 provided the relation  \eqref{eq:4.Poincare_Assumption}: 
$ \Cr{const:grad_Pi}
   \leq
   \frac{\Cr{const:Pi_Min}}{2\Cr{const:4.Sobolev}}.$
With this assumption, 
 by \eqref{eq:1.D_Min} and \eqref{eq:4.Poincare}, we have that,
 \begin{equation}
  \begin{split}
    &
   -2(\lambda+1)
   \int_\Omega
   |\vec{u}|^2
   f
   \,dx
   +
   \int_\Omega
   D(x)|\nabla \vec{u}|^2 f\,dx
    \\
   &\geq
   -2(\lambda+1)
   \int_\Omega
   |\vec{u}|^2
   f
   \,dx
   +
   \Cr{const:D_Min}
   \int_\Omega
   |\nabla \vec{u}|^2 f\,dx
   \\
   &\geq
   \left(
   -2(\lambda+1)
   +
   \frac{\Cr{const:D_Min}}{\Cr{const:4.Poincare}}
   \right)
   \int_\Omega
   |\vec{u}|^2
   f
   \,dx.
  \end{split}
 \end{equation}
 Note that $\Cr{const:4.Poincare}$ depends only on $n$,
 $\Cr{const:InitMin}$, $\Cr{const:InitMax}$, $\Cr{const:Pi_Min}$,
 $\Cr{const:Pi_Max}$, $\Cr{const:grad_D}$, and
 $\|\phi\|_{L^\infty(\Omega)}$, but is independent of
 $\Cr{const:D_Min}$. Thus, for $\gamma>0$, take $\Cr{const:D_Min}\geq1$
 sufficiently large such that
 \begin{equation}
  -2(\lambda+1)
   +
   \frac{\Cr{const:D_Min}}{\Cr{const:4.Poincare}}
 \geq
 \gamma
 \Cr{const:Pi_Max}.
 \end{equation}
 Further, we take $\Cr{const:D_Min}\geq1$ sufficiently large and sufficiently small
 $\Cr{const:grad_Pi}, \Cr{const:Pi_Time}>0$ such that the assumptions
 \eqref{eq:4.Second_Derivative_Free_Energy.3.Assumption1},
 \eqref{eq:4.Second_Derivative_Free_Energy.3.Assumption3},
 \eqref{eq:4.Second_Derivative_Free_Energy.3.Assumption4},
 \eqref{eq:4.Second_Derivative_Free_Energy.3.Assumption5}, and
 \eqref{eq:4.Poincare_Assumption} hold.  Note that
 from Corollary \ref{cor:1.bounds_of_log_f} and Lemma
 \ref{lem:4.Poincare}, $\Cr{const:log_f}$ and $\Cr{const:4.Poincare}$ are
 independent of $\Cr{const:D_Min}$, namely $\log f$ is bounded uniformly
 with respect to $\Cr{const:D_Min}$. Then, we can use
 \eqref{eq:4.Second_Derivative_Free_Energy.3} and
 \begin{equation}
   \frac{d^2}{dt^2}F[f](t)
   \geq
   \gamma
   \Cr{const:Pi_Max}
   \int_\Omega
   |\vec{u}|^2
   f
   \,dx
   -\frac{1}{12}
   \left(
   \int_\Omega
   |\vec{u}|^2
   f
   \,dx
   \right)^3.
 \end{equation}
 Finally, using the bounds of $\pi$ in \eqref{eq:1.PiMinMax}, we obtain
 \eqref{eq:4.Differential_Inequality_Dissipation}.
\end{proof}

Now  we are ready to prove the main result of this Section.

\begin{proof}
 [Proof of Theorem \ref{thm:4}] 
 For any $\gamma>0$, using Lemma
 \ref{lem:4.Differential_Inequality_Dissipation}, take sufficiently
 large positive number $\Cr{const:D_Min}\geq1$ and sufficiently small
 positive numbers $\Cr{const:Pi_Time}, \Cr{const:grad_Pi}>0$. 
 Then define,
 \begin{equation*}
  g(t)=-\frac{d}{dt}F[f](t)
  =
  \int_\Omega 
  \pi(x,t)
  |\vec{u}|^2
  f\,dx,
   \end{equation*} 
   and 
 $ c=\gamma,\quad
  p=3,$
 together with
 $
  d=\frac{1}{12\Cr{const:Pi_Min}^3}.  
$  
 Thus, if
 \begin{equation}
  g(0)
   =
  \int_\Omega
  \pi(x,0)
   |\nabla (D(x)\log f_0(x)+\phi(x))|^2
   f_0\,dx
   <
   \left(
    12\gamma\Cr{const:Pi_Min}^3
   \right)^{\frac{1}{2}},
 \end{equation}
 then by Lemma \ref{lem:3.Gronwall}
 \begin{equation}
    \begin{split}
  g(t)
   &=
   \int_\Omega
   \pi(x,t)
   |\vec{u}|^2f
   \,dx
   \\
   &\leq
   \left(
    \left(
     \int_\Omega
     \pi(x,0)
     |\nabla (D(x)\log f_0(x)+\phi(x))|^2
     f_0\,dx
    \right)^{-2}
    -
    \frac{1}{12\gamma\Cr{const:Pi_Min}^3}
   \right)^{-\frac{1}{2}}
   e^{-\gamma t}.
    \end{split}
 \end{equation}
 Therefore, by taking
 $\Cr{const:4.Initial_Energy}=(12\gamma\Cr{const:Pi_Min}^3)^{\frac{1}{2}}$
 and
 \begin{equation}
  \Cr{const:4.Exponential_Coefficient}
   =
    \left(
    \left(
     \int_\Omega
     \pi(x,0)
     |\nabla (D(x)\log f_0(x)+\phi(x))|^2
     f_0\,dx
    \right)^{-2}
    -
    \frac{1}{12\gamma\Cr{const:Pi_Min}^3}
   \right)^{-\frac{1}{2}},
 \end{equation}
we have finished the proof of  Theorem \ref{thm:4}.
\end{proof}

\section{Conclusion}
\label{sec:5}
In this work, we considered generalized nonlinear Fokker-Planck type equations with
inhomogeneous diffusion and with variable mobility parameters. Such systems
appear as a part of grain growth modeling in polycrystalline
materials. Using new reinterpretation of the classical entropy method
under settings of the bounded domain with periodic boundary
conditions and non-convexity assumptions on the potential function, we obtained the long time behavior of the solutions.


\section*{Acknowledgments}

The work of Yekaterina Epshteyn was partially supported by NSF
DMS-1905463 and by NSF DMS-2118172. The work of Chun Liu was partially
supported by NSF DMS-1950868 and by NSF DMS-2118181. The work of
Masashi Mizuno was partially supported by JSPS KAKENHI Grant Numbers
JP22K03376 and JP23H00085.

\bibliographystyle{plain}
\bibliography{references}

\begin{thebibliography}{10}

\bibitem{MR1842428}
Anton Arnold, Peter Markowich, Giuseppe Toscani, and Andreas Unterreiter.
\newblock On convex {S}obolev inequalities and the rate of convergence to
  equilibrium for {F}okker-{P}lanck type equations.
\newblock {\em Comm. Partial Differential Equations}, 26(1-2):43--100, 2001.

\bibitem{baierlein_1999}
Ralph Baierlein.
\newblock {\em Thermal Physics}.
\newblock Cambridge University Press, 1999.

\bibitem{MR3729587}
Patrick Bardsley, Katayun Barmak, Eva Eggeling, Yekaterina Epshteyn, David
  Kinderlehrer, and Shlomo Ta'asan.
\newblock Towards a gradient flow for microstructure.
\newblock {\em Atti Accad. Naz. Lincei Rend. Lincei Mat. Appl.},
  28(4):777--805, 2017.

\bibitem{DK:gbphysrev}
K.~Barmak, E.~Eggeling, M.~Emelianenko, Y.~Epshteyn, D.~Kinderlehrer, R.~Sharp,
  and S.~Ta'asan.
\newblock Critical events, entropy, and the grain boundary character
  distribution.
\newblock {\em Phys. Rev. B}, 83:134117, Apr 2011.

\bibitem{DK:BEEEKT}
K.~Barmak, E.~Eggeling, M.~Emelianenko, Y.~Epshteyn, D.~Kinderlehrer, and
  S.~Ta'asan.
\newblock Geometric growth and character development in large metastable
  networks.
\newblock {\em Rend. Mat. Appl. (7)}, 29(1):65--81, 2009.

\bibitem{Katya-Chun-Mzn4}
Katayun Barmak, Anastasia Dunca, Yekaterina Epshteyn, Chun Liu, and Masashi
  Mizuno.
\newblock Grain growth and the effect of different time scales.
\newblock In {\em Research in mathematics of materials science}, volume~31 of
  {\em Assoc. Women Math. Ser.}, pages 33--58. Springer, Cham, [2022]
  \copyright 2022.

\bibitem{MR2772123}
Katayun Barmak, Eva Eggeling, Maria Emelianenko, Yekaterina Epshteyn, David
  Kinderlehrer, Richard Sharp, and Shlomo Ta'asan.
\newblock An entropy based theory of the grain boundary character distribution.
\newblock {\em Discrete Contin. Dyn. Syst.}, 30(2):427--454, 2011.

\bibitem{arXiv:2502.13151}
Batuhan Bayir, Yekaterina Epshteyn, and William~M Feldman.
\newblock Global well-posedness of a nonlinear {F}okker-{P}lanck type model of
  grain growth.
\newblock arXiv:2502.13151, 2025.

\bibitem{BA47390682}
R.~Stephen Berry, Stuart~Alan Rice, and John Ross.
\newblock {\em Physical chemistry}.
\newblock Topics in physical chemistry series. Oxford University Press, 2nd ed.
  edition, 2000.

\bibitem{MR2759829}
Haim Brezis.
\newblock {\em Functional analysis, {S}obolev spaces and partial differential
  equations}.
\newblock Universitext. Springer, New York, 2011.

\bibitem{MR3485127}
Jos\'{e}~A. Ca\~{n}izo, Jos\'{e}~A. Carrillo, Philippe Lauren\c{c}ot, and
  Jes\'{u}s Rosado.
\newblock The {F}okker-{P}lanck equation for bosons in 2{D}: well-posedness and
  asymptotic behavior.
\newblock {\em Nonlinear Anal.}, 137:291--305, 2016.

\bibitem{MR1639292}
J.~A. Carrillo and G.~Toscani.
\newblock Exponential convergence toward equilibrium for homogeneous
  {F}okker-{P}lanck-type equations.
\newblock {\em Math. Methods Appl. Sci.}, 21(13):1269--1286, 1998.

\bibitem{MR3019444}
Jos\'{e}~A. Carrillo, Mar\'{\i}a D.~M. Gonz\'{a}lez, Maria~P. Gualdani, and
  Maria~E. Schonbek.
\newblock Classical solutions for a nonlinear {F}okker-{P}lanck equation
  arising in computational neuroscience.
\newblock {\em Comm. Partial Differential Equations}, 38(3):385--409, 2013.

\bibitem{MR4196904}
Pierre Degond, Maxime Herda, and Sepideh Mirrahimi.
\newblock A {F}okker-{P}lanck approach to the study of robustness in gene
  expression.
\newblock {\em Math. Biosci. Eng.}, 17(6):6459--6486, 2020.

\bibitem{MR3932086}
Weinan E, Tiejun Li, and Eric Vanden-Eijnden.
\newblock {\em Applied stochastic analysis}, volume 199 of {\em Graduate
  Studies in Mathematics}.
\newblock American Mathematical Society, Providence, RI, 2019.

\bibitem{MR4506846}
Yekaterina Epshteyn, Chang Liu, Chun Liu, and Masashi Mizuno.
\newblock Nonlinear inhomogeneous {F}okker-{P}lanck models:
  energetic-variational structures and long-time behavior.
\newblock {\em Anal. Appl. (Singap.)}, 20(6):1295--1356, 2022.

\bibitem{epshteyn2022local}
Yekaterina Epshteyn, Chang Liu, Chun Liu, and Masashi Mizuno.
\newblock Local well-posedness of a nonlinear {F}okker-{P}lanck model.
\newblock {\em Nonlinearity}, 36(3):1890, feb 2023.

\bibitem{CMS-Katya-Chun-Masashi}
Yekaterina Epshteyn, Chun Liu, and Masashi Mizuno.
\newblock Large time asymptotic behavior of grain boundaries motion with
  dynamic lattice misorientations and with triple junctions drag.
\newblock {\em Communications in Mathematical Sciences}, 19(5):1403--1428,
  2021.

\bibitem{MR4263432}
Yekaterina Epshteyn, Chun Liu, and Masashi Mizuno.
\newblock Motion of {G}rain {B}oundaries with {D}ynamic {L}attice
  {M}isorientations and with {T}riple {J}unctions {D}rag.
\newblock {\em SIAM J. Math. Anal.}, 53(3):3072--3097, 2021.

\bibitem{epshteyn2021stochastic}
Yekaterina Epshteyn, Chun Liu, and Masashi Mizuno.
\newblock A stochastic model of grain boundary dynamics: a {F}okker-{P}lanck
  perspective.
\newblock {\em Math. Models Methods Appl. Sci.}, 32(11):2189--2236, 2022.

\bibitem{MR1607500}
J.~L. Ericksen.
\newblock {\em Introduction to the thermodynamics of solids}, volume 131 of
  {\em Applied Mathematical Sciences}.
\newblock Springer-Verlag, New York, revised edition, 1998.

\bibitem{MR1625845}
Lawrence~C. Evans.
\newblock {\em Partial differential equations}, volume~19 of {\em Graduate
  Studies in Mathematics}.
\newblock American Mathematical Society, Providence, RI, 1998.

\bibitem{MR2053476}
C.~W. Gardiner.
\newblock {\em Handbook of stochastic methods for physics, chemistry and the
  natural sciences}, volume~13 of {\em Springer Series in Synergetics}.
\newblock Springer-Verlag, Berlin, third edition, 2004.

\bibitem{MR3916774}
Mi-Ho Giga, Arkadz Kirshtein, and Chun Liu.
\newblock Variational modeling and complex fluids.
\newblock In {\em Handbook of mathematical analysis in mechanics of viscous
  fluids}, pages 73--113. Springer, Cham, 2018.

\bibitem{MR1814364}
David Gilbarg and Neil~S. Trudinger.
\newblock {\em Elliptic partial differential equations of second order}.
\newblock Classics in Mathematics. Springer-Verlag, Berlin, 2001.
\newblock Reprint of the 1998 edition.

\bibitem{MR2467561}
Timothy Gowers, June Barrow-Green, and Imre Leader, editors.
\newblock {\em The {P}rinceton companion to mathematics}.
\newblock Princeton University Press, Princeton, NJ, 2008.

\bibitem{MR4218540}
Jingwei Hu, Jian-Guo Liu, Yantong Xie, and Zhennan Zhou.
\newblock A structure preserving numerical scheme for {F}okker-{P}lanck
  equations of neuron networks: numerical analysis and exploration.
\newblock {\em J. Comput. Phys.}, 433:Paper No. 110195, 23, 2021.

\bibitem{MR3497125}
Ansgar J\"ungel.
\newblock {\em Entropy methods for diffusive partial differential equations}.
\newblock SpringerBriefs in Mathematics. Springer, [Cham], 2016.

\bibitem{MR0241822}
O.~A. Lady{\v{z}}enskaja, V.~A. Solonnikov, and N.~N. Ural'ceva.
\newblock {\em Linear and quasilinear equations of parabolic type}.
\newblock Translated from the Russian by S. Smith. Translations of Mathematical
  Monographs, Vol. 23. American Mathematical Society, Providence, R.I., 1967.

\bibitem{MR1465184}
Gary~M. Lieberman.
\newblock {\em Second order parabolic differential equations}.
\newblock World Scientific Publishing Co. Inc., River Edge, NJ, 1996.

\bibitem{MR2165379}
Fang-Hua Lin, Chun Liu, and Ping Zhang.
\newblock On hydrodynamics of viscoelastic fluids.
\newblock {\em Comm. Pure Appl. Math.}, 58(11):1437--1471, 2005.

\bibitem{MR1812873}
P.~A. Markowich and C.~Villani.
\newblock On the trend to equilibrium for the {F}okker-{P}lanck equation: an
  interplay between physics and functional analysis.
\newblock In {\em VI Workshop on Partial Differential Equations, Part II (Rio
  de Janeiro, 1999)}, volume~19, pages 1--29. Sociedade Brasileira de
  Matem\'{a}tica, Rio de Janeiro, 2000.

\bibitem{BA00323160}
Donald~Allan McQuarrie.
\newblock {\em Statistical mechanics}.
\newblock Harper's chemistry series. Harper \& Row, 1976.

\bibitem{MR3288096}
Grigorios~A. Pavliotis.
\newblock {\em Stochastic processes and applications}, volume~60 of {\em Texts
  in Applied Mathematics}.
\newblock Springer, New York, 2014.
\newblock Diffusion processes, the Fokker-Planck and Langevin equations.

\bibitem{MR987631}
H.~Risken.
\newblock {\em The {F}okker-{P}lanck equation}, volume~18 of {\em Springer
  Series in Synergetics}.
\newblock Springer-Verlag, Berlin, second edition, 1989.
\newblock Methods of solution and applications.

\bibitem{MR0274683}
R.~Tyrrell Rockafellar.
\newblock {\em Convex analysis}, volume No. 28 of {\em Princeton Mathematical
  Series}.
\newblock Princeton University Press, Princeton, NJ, 1970.

\bibitem{MR1839500}
Silvio R.~A. Salinas.
\newblock {\em Introduction to statistical physics}.
\newblock Graduate Texts in Contemporary Physics. Springer-Verlag, New York,
  2001.
\newblock Translated from the Portuguese.

\bibitem{PhysRevE.94.062117}
Gabriele Sicuro, Peter Rap\ifmmode~\check{c}\else \v{c}\fi{}an, and Constantino
  Tsallis.
\newblock Nonlinear inhomogeneous fokker-planck equations: Entropy and
  free-energy time evolution.
\newblock {\em Phys. Rev. E}, 94:062117, Dec 2016.

\bibitem{MR4439423}
Yiwei Wang and Chun Liu.
\newblock Some recent advances in energetic variational approaches.
\newblock {\em Entropy}, 24(5):Paper No. 721, 26, 2022.

\bibitem{MR3021544}
Hao Wu, Xiang Xu, and Chun Liu.
\newblock On the general {E}ricksen-{L}eslie system: {P}arodi's relation,
  well-posedness and stability.
\newblock {\em Arch. Ration. Mech. Anal.}, 208(1):59--107, 2013.

\end{thebibliography}

\end{document}